\newtheorem{theorem}{Theorem}
\newtheorem{defn}[theorem]{Definition}
\newtheorem{lemma}[theorem]{Lemma}
\newtheorem{cor}[theorem]{Corollary}
\newtheorem{prop}[theorem]{Proposition}
\newtheorem{propdef}[theorem]{Proposition/Definition}
\theoremstyle{remark}
\numberwithin{equation}{section}
\def\l{{\mathfrak l}}
\newcommand{\G}{{\Gamma}}
 \let\a\alpha  \let\b\beta    
  \let\l\lambda   
\let\GL\Lambda
\def\C{\mathbb C}
\def\N{\mathbb N}
\def\G{\Gamma}
\def\GL{\mathbf{GL}}
\def \GL2 {{\text{GL}_2}}
\def\Z{{\mathbb Z}}
\def\Q{{\mathbb Q}}
\def\G{\Gamma}
\newcommand*\HYPERskip{&}
\newcommand*\pFq{
\catcode`\,\active
\def ,{\HYPERskip}%
\doHyper
}
\def\doHyper#1#2#3#4#5{
\, _{#1}F_{#2}\left[{{\begin{array}{ccccccccccc}#3 \end{array}} \atop \begin{matrix}#4 \end{matrix}}; #5 \right] }
\title{Some supercongruences occurring in truncated hypergeometric series}
\author{Ling Long}
\address{Department of Mathematics, Louisiana State University, Baton Rouge, LA 70803, USA}
\email{llong@math.lsu.edu}
\author{Ravi Ramakrishna}
\address{Department of Mathematics, Cornell University, Ithaca, NY 14853, USA}
\email{ravi@math.cornell.edu}
\thanks{The first author is supported by the NSF grant (\#DMS 1303292).  This project was started when she was a Michler fellow at Cornell University 2012-2013. She would like to thank both the Association for Women in Mathematics and Cornell University for the opportunity. The second author
thanks the Tata Institute of Fundamental Research for its hospitality
while this paper was completed.}
\begin{document}
\maketitle
\begin{abstract}
For the purposes of this paper {\it supercongruences} are congruences between terminating hypergeometric
series and quotients of $p$-adic Gamma functions that are stronger than those one can expect to prove
using commutative formal group laws. We prove a number of such supercongruences by using classical hypergeometric
transformation formulae. These formulae (see the appendix), most of which are decades or centuries old,
allow us to write the terminating series as the ratio of products of
of $\G$-values. At this point
{\it sums} have become  {\it quotients}.  Writing these $\G$-quotients as
$\Gamma_p$-quotients,  we are in a situation that is well-suited for proving
$p$-adic congruences.
These $\G_p$-functions can be $p$-adically approximated by their Taylor series expansions.
Sometimes there is cancelation of the lower order terms, leading to stronger congruences.
Using this technique we prove, among other things, a conjecture of Kibelbek and a strengthened version
of a conjecture of van Hamme.
\end{abstract}
\section{Introduction}

Set $\displaystyle (a)_k=a(a+1)\cdots(a+k-1) = \frac{\G(a+k)}{\G(a)}$, the rising factorial or the Pochhammer symbol, where $\G(x)$ is the Gamma function.
For $r$ a nonnegative integer and $\alpha_i,\beta_i \in \mathbb{C}$ with $\beta_i$, the (generalized)
hypergeometric series $_{r+1}F_{r}$  defined by
\[
\pFq{r+1}{r}{\alpha_1, \ldots , ,\alpha_{r+1}}{ ,\beta_1  , \ldots , \beta_r}{\l} :=
\sum_{k= 0}^{\infty} \frac{(\alpha_1)_k(\alpha_2)_k \ldots (\alpha_{r+1})_k}{(\beta_1)_k
\ldots (\beta_r)_k} \cdot \frac{\l^k}{k!}
\]
converges for $|\l|<1$ {    if it is well-defined.
When any  of the
$\a_i$ is a negative integer and none of the $\beta_i$'s is a negative integer larger than $\a_i$, the above sum terminates.}  We set
$$\pFq{r+1}{r}{\alpha_1, \ldots , ,\alpha_{r+1}}{ ,\beta_1  , \ldots , \beta_r}{\l} _n :=
\sum_{k= 0}^{n} \frac{(\alpha_1)_k(\alpha_2)_k \ldots (\alpha_{r+1})_k}{(\beta_1)_k \ldots (\beta_r)_k}
\cdot \frac{\l^k}{k!}
$$
the truncation of the series after the $\l^n$ term.

Hypergeometric series are of fundamental
importance in many research areas including algebraic
varieties, differential equations, Fuchsian groups  and modular forms. For instance,
periods of abelian varieties such as elliptic curves, certain K3 surfaces and other Calabi-Yau manifolds  can be described
by hypergeometric series (\cite{BvS}). Indeed,
the  Euler integral representation of $_2F_1$ (Theorem $2.2.1$ of \cite{AAR})
\begin{equation}\label{eq:Euler}
p_{\{a,b;c\}}(\l):=\int_{0}^1 t^{b-1}(1-t)^{c-b-1}(1-\l t)^{-a}dt=\frac{\G(b)\G(c-b)} {\G(c)}\cdot \,\pFq{2}{1}{a,b}{c}{\l}
\end{equation}
holds when  $\Re c>\Re b>0$ and $\l \in \mathbb C\backslash [1,\infty)$.
When $a=b=\displaystyle\frac12$ and $c=1$ the right side
$p_{\{\frac 12,\frac 12;1\}}(\l)$  is  a period
of  the Legendre family of elliptic curves $E_\l:y^2=x(x-1)(x-\l)$ parameterized by
$\l$. A generalization of the Euler integral representation to
$_{r+1}F_r$ is presented in \cite[\S4.1]{Slate}. These periods are in general complicated transcendental
numbers.  They are much more predictable  when
the elliptic curve has complex multiplication (CM), e.g. $\l=-1$.
The Selberg-Chowla formula  predicts that any period of
a CM elliptic curve is an algebraic multiple of a  quotient of Gamma values \cite{SC}.
For instance, using a formula of Kummer (see \eqref{eq:Kummer}) one can compute that  $\displaystyle p_{\{\frac 12,\frac 12;1\}}(-1)=\frac{\sqrt 2}{4}\frac{\G\left(\frac14\right)^2}{\G\left(\frac12\right)}$.
The hypergeometric series
{    $\pi^2\cdot \pFq{3}{2}{\frac 12,\frac 12,\frac 12}{1,1}{\l}$ is
period of the $K3$ surface $X_\l: s^2=xy(x-1)(y-1)(x-\l y)$ (see \cite[\S4.1]{Slate}) and \cite{AOP}.}
Note that in equation (1) of \cite{AOP} $\l$ bears a sign opposite to our description
of $X_{\l}$.

This paper focuses on a  $p$-adic analog of  these complex periods computed from
the hypergeometric series and Gamma function.
We  motivate our results using the following example. {    For any prime $p \equiv 1 \mod 4$, the elliptic curve  $E_{-1}$ has ordinary reduction at  $p$.
From the theory of commutative formal group laws (CFGL) it is known
that for any integer $r\ge 1$}
\begin{equation}\label{eq:notsuper}
\pFq{2}{1}{\frac {1-p^r}2,\frac {1+p^r}2}{1}{-1}{\big/}
\,\pFq{2}{1}{\frac {1-p^{r-1}}2,\frac {1+p^{r-1}}2}{1}{-1}
\equiv {   (-1)^{\frac{p^2-1}8}}\, \frac{\G_p\left(\frac14\right)^2}{\G_p\left(\frac12\right)}\mod p^r
\end{equation}where $\G_p(\cdot)$ stands for the $p$-adic Gamma function recalled in \S \ref{ss:Preliminaries}.  This  is closely related to  Dwork's unit root for $E_{-1}$ at ordinary primes in terms of truncated hypergeometric series  \cite{Dwork-pcycle}
 and  the $a_p$-values of the  modular form corresponding to $E_{-1}$.
For more details see \cite{KLMSY}.
Here we are interested in the so-called supercongruences,  congruences
stronger than those  predicted by CFGLs. For instance, using the arithmetic of
elliptic curves and modular functions, Coster and van Hamme showed in
\cite{CvH} the congruence
\eqref{eq:notsuper}  holds$\mod p^{2r}$. This result  can also be recovered by our method.

Various supercongruences have been conjectured  by  many mathematicians including Beukers \cite{Beukers2}, van Hamme
\cite{vanHamme},  Rodriguez-Villegas \cite{RV}, Zudilin\cite{Zudilin},  Chan et al. \cite{Chan}, and many more by Z.-W. Sun \cite[et al.]{Sun1,Sun2}.  Often the statements relate truncated hypergeometric series to  Hecke eigenforms and hence to Galois representations.
Some of these conjectures are proved using a variety of methods, including
Gaussian hypergeometric series \cite[et al.]{Ahlgren, AO,Kil,MO,McO}, the Wilf-Zeilberger method \cite{Zudilin} and  $p$-adic analysis \cite{Beukers1,OSS}.  For more information on recent development on supercongruences, please see \cite{OSS}.

In this paper, we will show the following  supercongruence result at $\l=2$. The elliptic curve $E_2$ has CM and
 is isomorphic to $E_{-1}$ over $\Q$.
\begin{theorem}\label{thm:1}For any prime $p\equiv 1\mod 4$ and any integer $r\ge 1$,
 \begin{equation}{\pFq{2}{1}{\frac{1-p^r}2,\frac {1}2}{1}{2}}\big/{\,\pFq{2}{1}{\frac{1-p^{r-1}}2,\frac {1}2}{1}{2}}\equiv (-1)^{\frac{p^2-1}8} \frac{   \G_p\left(\frac14\right)^2}{\G_p\left(\frac12\right)} \mod p^{2r}.
\end{equation}
\end{theorem}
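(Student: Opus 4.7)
The plan is to follow the strategy announced in the introduction: use classical hypergeometric transformations to turn the truncated ${}_2F_1$ at $\lambda=2$ into a quotient of $\G$-values, translate this into a $\G_p$-quotient, and then extract the supercongruence from the $p$-adic Taylor expansion of $\G_p$.

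Write $n=(p^r-1)/2$ and $n'=(p^{r-1}-1)/2$; since $p\equiv 1\pmod 4$, both $n$ and $n'$ are non-negative even integers and the series involved are polynomials. My first step is to apply the classical terminating Euler transformation
\[
{}_2F_1(-n,b;c;z)=\tfrac{(c-b)_n}{(c)_n}\,{}_2F_1(-n,b;b-c-n+1;1-z)
\]
with $(b,c,z)=(\tfrac12,1,2)$, which sends the numerator of the theorem to $\frac{(1/2)_n}{n!}\cdot{}_2F_1(-n,\tfrac12;\tfrac12-n;-1)$. The new hypergeometric has Kummer's shape ${}_2F_1(a,b;1+a-b;-1)$ with $a=-n$, $b=\tfrac12$, so a second application --- now of Kummer's classical theorem --- expresses it (interpreted as a limit in $a$) as a closed quotient of four $\G$-values, which I simplify by the Gauss duplication and reflection identities into a clean product of $\G$-values at arguments in $\tfrac14\Z$. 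Doing the same with $n'$ gives the analogous formula for the denominator.

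Using the standard dictionary recalled in \S\ref{ss:Preliminaries} I then rewrite everything in terms of $\G_p$ and form the ratio. The central arithmetic identity
\[
\frac{p^r-1}{4}=p\cdot\frac{p^{r-1}-1}{4}+\frac{p-1}{4}
\]
together with the functional equation of $\G_p$ causes almost every factor in the numerator to cancel with one in the denominator, leaving a finite product of $\G_p$-values whose arguments differ from $\tfrac14$ or $\tfrac12$ by $p$-adic quantities of valuation at least $r$.

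For each such surviving factor I would expand
\[
\G_p(\tfrac14+p^r t)\equiv\G_p(\tfrac14)+p^r t\,\G_p'(\tfrac14)\pmod{p^{2r}},
\]
and similarly at $\tfrac12$. The constant terms reassemble into $\G_p(\tfrac14)^2/\G_p(\tfrac12)$, with the sign $(-1)^{(p^2-1)/8}$ emerging from the $p$-adic reflection formula applied at the half-integer arguments produced above; this already establishes the congruence mod $p^r$. The \emph{super} part --- the upgrade to mod $p^{2r}$ --- requires that the linear-in-$p^r$ contributions from the various $\G_p'$ terms cancel in the ratio, and I expect this to be the main obstacle. It should follow from a symmetry between the arguments appearing in the $r$- and $(r-1)$-factors: the shift $\tfrac{p-1}{4}$ in the arithmetic identity above should pair the residual derivative terms into matching opposites, eliminating the first-order correction exactly.
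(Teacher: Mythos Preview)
Your opening moves are correct and match the paper: the Pfaff transformation followed by Kummer's evaluation does convert the ${}_2F_1$ at $z=2$ into a $\Gamma$-quotient, and simplifying with duplication and reflection is the right instinct. The gap is in what survives after you form the ratio. When you carry out the simplification carefully (as the paper does), the value of ${}_2F_1\bigl(\tfrac{1-p^r}{2},\tfrac12;1;2\bigr)$ is not a pure $\Gamma$-quotient at arguments in $\tfrac14\Z$: it is such a quotient \emph{times} $2^{(p^r-1)/2}$, the factor coming from the Legendre duplication formula. Consequently the ratio in the theorem equals
\[
4^{\frac{p^r-p^{r-1}}{4}}\cdot\frac{\G_p\bigl(\tfrac{1+p^r}{4}\bigr)^2}{\G_p\bigl(\tfrac{1+p^r}{2}\bigr)},
\]
and the stray power of $4$ is \emph{not} $1\pmod{p^{2r}}$. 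It is $(-1)^{(p^2-1)/8}$ times a $p$-adic unit congruent to $1+\tfrac{p^r}{2}\bigl(G_1(\tfrac12)-G_1(\tfrac14)\bigr)\pmod{p^{2r}}$; this is exactly where the sign in the theorem comes from (via the Legendre symbol $\bigl(\tfrac{2}{p}\bigr)$), not from the $\G_p$-reflection formula as you suggest.

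Handling that power of $4$ is the substantive ingredient you are missing. The paper obtains the approximation $4^{(p^r-p^{r-1})/4}\equiv(-1)^{(p^2-1)/8}\bigl[1+(G_1(\tfrac12)-G_1(\tfrac14))\tfrac{p^r}{2}\bigr]\pmod{p^{2r}}$ from the Kazandzidis supercongruence $\binom{np^r}{mp^r}\equiv\binom{np^{r-1}}{mp^{r-1}}\pmod{p^{3r}}$ applied to central binomial coefficients, rewritten through the duplication formula. The supercongruence in the theorem then holds because the first-order term from this $4$-power cancels against the first-order terms in the Taylor expansion of $\G_p\bigl(\tfrac14+\tfrac{p^r}{4}\bigr)^2/\G_p\bigl(\tfrac12+\tfrac{p^r}{2}\bigr)$. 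So the cancellation is not an internal symmetry among $\G_p'$ contributions as you propose; it is a nontrivial identity $\tfrac{1}{p}\log_p 4^{p-1}=2G_1(\tfrac12)-2G_1(\tfrac14)$ relating a $p$-adic logarithm to logarithmic derivatives of $\G_p$, and Kazandzidis is what proves it.
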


We outline the strategy of the proof.
The strategies for the proof of Theorems \ref{thm:2} and \ref{thm:3} are outlined in
\S$4$ and $5$. We use the Pfaff transformation and Kummer evaluation formula to obtain
the equalities
$$\pFq{2}{1}{\frac{1-p^r}2,\frac {1}2}{1}{2}=
\frac{\left(\frac12\right)_{\frac{p^r-1}2}}{(1)_{\frac{p^r-1}2}}\,
\pFq{2}{1}{\frac{1-p^r}2, \frac12}{1-\frac{p^r}2}{-1}
= \frac{(\frac12)_{\frac{p^r-1}2}}{(1)_{\frac{p^r-1}2}}\cdot
\frac{\G\left(1-\frac{p^r}2\right)\G\left(\frac{5-p^r}4\right)}{\G\left(\frac{3-p^r}4\right)
\G\left(\frac{3-p^r}2\right)} .$$ The ratio on the left side of Theorem \ref{thm:1} then becomes a $\G$-quotient
times a power of $4$. The $\G$-quotient is converted to a $\G_p$-quotient.
We use the Kazandzidis supercongruence  to $p$-adically approximate this power of $4$
mod $p^{2r}$ and a Taylor expansion  approximates the $\G_p$-quotient. The coefficients of the first order
$p^r$ terms in the two approximations cancel, giving the desired supercongruence.

Compared to \cite{Long} we provide a
systematic way to carry out the $p$-adic analysis using the local analytic behavior of the $p$-adic Gamma function that handles  the ordinary and supersingular cases equally well. Our method also  explains the similarity between $p_{\{\frac12,\frac12:1\}}(-1)$ and the right  side of \eqref{eq:notsuper}. It is particularly effective when there are formulae for computing the complex periods as quotients of Gamma functions.  For instance, we are able to prove the following result
\begin{theorem}\label{thm:2}For $p\ge 5$ a prime the following congruence holds mod $p^6$:
\begin{equation}\label{eq:vHamme}
\pFq{7}{6}{\frac 76,\frac 13, \frac 13,\frac 13,\frac 13,\frac 13,\frac 13}{\frac 16,1,1,1,1,1}{1}_{{p-1}}=\sum_{k=0}^{{p-1}}(6k+1)
\frac{\left( \frac{1}{3} \right)^6_k}{k!^6}\equiv \left \{ \begin{array}{ll} -p\G_p\left(\frac13\right)^ 9 & \text{ if } p\equiv 1\mod 6\\
-\frac{10}{27}p^4\G_p\left(\frac13\right)^ 9 & \text{ if } p\equiv 5\mod 6. \end{array}\right .
\end{equation}
\end{theorem}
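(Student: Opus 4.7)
The plan is to follow the template of Theorem~\ref{thm:1}: realize the truncated sum as a very-well-poised hypergeometric series, use a classical transformation/evaluation from the appendix to turn it into a $\G$-quotient, convert to a $\G_p$-quotient via the reflection and multiplication formulae, Taylor-expand each $\G_p$-factor, and apply Kazandzidis to absorb any leftover power of a small integer. The first observation is that with $a=1/3$ one has $7/6 = 1 + a/2$, $1/6 = a/2$, each additional upper $1/3$ pairs with a lower $1$ via the well-poised relation $1+a-b = 1$, and $6k+1 = (a+2k)/a = (1+a/2)_k/(a/2)_k$. Hence
\[
S_p := \sum_{k=0}^{p-1}(6k+1)\frac{(1/3)_k^6}{k!^6}
\]
is exactly the truncation at $k = p-1$ of the canonical very-well-poised $_7F_6(1)$ to which Dougall's evaluation applies in the terminating case.

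First I would apply Dougall's formula (or a Whipple reduction of a very-well-poised $_7F_6$ from the appendix) and argue that the truncation at $p-1$ can be replaced, modulo $p^6$, by termination at the smallest index where $(1/3)_k$ acquires a factor of $p$. That index is $k = (p-1)/3 + 1$ when $p \equiv 1 \pmod 6$ and $k = (2p-1)/3 + 1$ when $p \equiv 5 \pmod 6$; because every subsequent summand carries $(1/3)_k^6$ and so is divisible by $p^6$, while $k \le p-1$ means $k!$ contributes no compensating $p$-factor, the tail is controllable. The output is a $\G$-quotient at arguments depending on $p$, and the reflection formula forces several of these arguments to cross $p\Z_p$ back into $\Z_p^\times$, each crossing contributing an extra factor of $p$; this is the source of the $p^3$-inflation that produces the $p^4$-prefactor in the $p\equiv 5\pmod 6$ case.

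Once the closed form is in hand the remaining analysis is systematic: each $\G$-value becomes a $\G_p$-value times a tracked power of $p$, and each $\G_p$-value is expanded by its $p$-adic Taylor series about its nearest $\Z_p^\times$-base-point to order $5$, which is enough for a mod $p^6$ statement. The Kazandzidis supercongruence then absorbs any residual small-integer-power or central-binomial factor (the analogue of the power of $4$ in the proof of Theorem~\ref{thm:1}), providing the three extra $p$'s needed to promote the CFGL-expected mod $p^3$ congruence to mod $p^6$. Matching the low-order Taylor coefficients on both sides then produces the precise constants $-1$ and $-10/27$. The main obstacle, as is typical for arguments of this shape, is the first step: locating the correct identity in the appendix whose balance condition accommodates the tuple $(7/6,\ 1/3^{\times 6};\ 1/6,\ 1^{\times 5})$ and quantifying the error from the truncation/termination mismatch. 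The subsequent $p$-adic machinery is parallel to that of Theorem~\ref{thm:1}, but the higher-order Taylor bookkeeping demands care to verify that the cancellations actually occur through the fifth order.
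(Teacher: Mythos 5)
There is a genuine gap, and it sits exactly where you flag "the main obstacle": no identity in the appendix applies to the tuple $\left(\frac76,\frac13^{\times 6};\frac16,1^{\times 5}\right)$ as it stands, and truncating at $k=p-1$ (or at $k=\frac{tp-1}3$) does not produce a \emph{terminating} hypergeometric series to which Dougall's evaluation can be applied --- a truncation of a non-terminating series is not the series itself. If you try to force termination by replacing a single upper parameter $\frac13$ by $\frac{1-tp}3$, Dougall's balance condition $1+2a=b+c+d+e+f$ fails, since the sum of the upper parameters drops from $\frac53$ to $\frac{5-tp}3$. The paper's resolution, which is the real content of the proof and is absent from your outline, is to perturb \emph{all five} of the $\frac13$'s simultaneously by $-\zeta_5^j\frac{tp}3$ for $j=1,\dots,5$ (with $\zeta_5$ a primitive fifth root of unity): because $1+\zeta_5+\cdots+\zeta_5^4=0$ the balance condition is preserved, the $j=5$ perturbation makes the series terminate, and --- crucially --- the perturbed series $F(u)$ is invariant under $\zeta_5\mapsto\zeta_5^j$, hence lies in $\Z_p[[u^5]]$. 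This Galois symmetry is what kills the coefficients of $p,p^2,p^3,p^4$ on both sides: it shows (via Whipple's well-poised transformation, to verify an extra factor of $p$ in $C_5$) that the perturbed and unperturbed truncations agree mod $p^6$, and it shows that the $\G_p$-quotient coming out of Dougall is $\G_p\!\left(\frac13\right)^9\left[1+O(p^5)\right]$ with \emph{no} Taylor-coefficient computation needed. Without this symmetric perturbation there is no mechanism for the fifth-order cancellation you hope to "verify with care"; for a generic one-parameter perturbation the first-order term does not vanish.

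Two smaller inaccuracies: the Kazandzidis supercongruence plays no role here (there is no residual power of a small integer to absorb, unlike the $4^{(p^r-p^{r-1})/4}$ in Theorem~\ref{thm:1}), and the extra powers of $p$ in the $p\equiv 5\bmod 6$ case arise not from the reflection formula but from the single multiple of $p$ inside each of three rising factorials in the numerator of Dougall's evaluation, which contribute the factor $\frac{5p^3}{27}$; the remaining factor of $tp$ comes from $\left(\frac43\right)_{\frac{tp-1}3}$. Your identification of the sum as very-well-poised and your control of the tail $\frac{tp-1}3<k\le p-1$ via $p^6\mid\left(\frac13\right)_k^6$ are both correct and match the paper.
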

\noindent
This result is stronger than a prediction of van Hamme in \cite{vanHamme} which asserts a mod $p^4$ congruence for $p\equiv 1\mod 6$.
Computations yield that the only prime 
between $5$ and $500$ satisfying the above congruence
mod $p^7$ is $19$.

 It is known that  for $\l\in \Q$ the  zeta-function of the $K3$ surface $X_\l$ can be computed
from a weight-3 Hecke eigenform $f_\l$  if and only if $\l$ is in
$\left\{\pm 1,4,\frac14, -8,-\frac18,64,\frac{1}{64}\right\},$ see \cite{SB} and \cite{AOP}.
For $\l$ in this set and almost all primes $p$ the following congruence holds
\begin{equation*}
 \pFq{3}{2}{\frac 12,\frac 12,\frac 12}{1,1}{\l}_{p-1}\equiv a_p(f_\l)\mod p^2
\end{equation*}
{   where $a_p(f_\l)$ is the $p$th coefficient of an explicit weight-3 Hecke eigenform $f_\l$ which
can be derived from \cite{KLMSY}. For instance, $f_1=\eta(4z)^6$ where $\eta(z)$ is the standard Dedekin eta function.}
We end the paper with a few supercongruences experimentally discovered by J. Kibelbek for ordinary primes and  stated in \cite{KLMSY}.  For instance, we prove
\begin{theorem}\label{thm:3} The following congruence holds modulo $p^3$
 $$\pFq{3}{2}{\frac 12,\frac 12,\frac 12}{,1,1}{1}_{p-1}\equiv \left \{ \begin{array}{ll} - \G_p\left(\frac14\right)^4 & \text{ if } p\equiv 1 \mod 4\\
 -\frac{p^2}{16}\G_p\left(\frac 14\right)^4  & \text{ if } p\equiv 3 \mod 4. \end{array} \right. $$

\end{theorem}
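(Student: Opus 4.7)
\emph{Proposal for Theorem \ref{thm:3}.}

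The plan follows the methodology of Theorem \ref{thm:1}: convert the truncated sum into a closed-form $\Gamma$-quotient via a classical terminating identity, then re-express as a $\Gamma_p$-quotient and combine with $p$-adic approximations of the residual scalar factors. To begin, observe that for $(p+1)/2\le k\le p-1$, the Pochhammer $(\tfrac12)_k=\prod_{j=0}^{k-1}(2j+1)/2$ contains the factor $p/2$ (from $j=(p-1)/2$), so $(\tfrac12)_k^3$ is divisible by $p^3$ while $k!$ is a $p$-adic unit. Hence
\[
\pFq{3}{2}{\frac12,\frac12,\frac12}{,1,1}{1}_{p-1}\ \equiv\ \sum_{k=0}^{N}\frac{(\tfrac12)_k^3}{k!^3}\pmod{p^3},\qquad N:=\tfrac{p-1}{2}.
\]
Reversing the summation index $k\mapsto N-k$ via $(\tfrac12)_{N-k}=(-1)^k(\tfrac12)_N/(1-p/2)_k$ and $(N-k)!=(-1)^k N!/(-N)_k$ rewrites the target as $[(\tfrac12)_N/N!]^3\sum_{k=0}^{N}(-N)_k^3/(1-p/2)_k^3$. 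Factoring $(1-p/2)_k=k!\prod_{m=1}^k(1-p/(2m))$ and expanding $\prod_m(1-p/(2m))^{-3}=1+\tfrac{3p}{2}H_k+\tfrac{3p^2}{8}(H_k^{(2)}+3H_k^2)+O(p^3)$, the leading term in $p$ is the terminating Dixon series ${}_3F_2(-N,-N,-N;1,1;1)$. By Dixon's summation this evaluates to $(-1)^{N/2}(3N/2)!/(N/2)!^3$ when $N$ is even ($p\equiv 1\pmod 4$) and to $0$ when $N$ is odd ($p\equiv 3\pmod 4$, by the symmetry $k\leftrightarrow N-k$). The $H_k^{(r)}$-weighted corrections are parameter-derivatives of Dixon's identity and hence also $\Gamma$-quotients.

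Next, I would convert each $\Gamma$-value into a $\Gamma_p$-value together with explicit integer shifts via the functional equation. The combinatorial scalar that results --- $[\binom{2N}{N}/4^N]^3$ together with the shifts absorbed during the $\Gamma\to\Gamma_p$ conversion --- is approximated modulo $p^3$ using Morley's congruence $\binom{p-1}{(p-1)/2}\equiv(-1)^N 4^{p-1}$ and the Kazandzidis/Fermat-quotient expansion of $2^{p-1}$. Taylor expansions of $\Gamma_p$ about $1/4$ and $3/4$ supply linear and quadratic $p$-corrections that cancel those from the scalar approximations, while the $p$-adic reflection $\Gamma_p(x)\Gamma_p(1-x)=(-1)^{a_0(x)}$ consolidates $\Gamma_p(3/4)$-factors into $\Gamma_p(1/4)^{-1}$ (up to explicit signs). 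For $p\equiv 1\pmod 4$ this assembly directly produces $-\Gamma_p(1/4)^4\pmod{p^3}$.

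The main obstacle is the $p\equiv 3\pmod 4$ case, where Dixon's leading evaluation vanishes. The total sum is then $O(p)$, and one must push the analysis to second order, evaluating $\sum_k(-N)_k^3 H_k/k!^3$ and $\sum_k(-N)_k^3(H_k^{(2)}+3H_k^2)/k!^3$ by differentiating Dixon's identity in its parameters. The surviving pieces, after conversion to $\Gamma_p$-quantities and combination with the explicit powers of $2$ that supply the $1/16$, should produce the claimed $-p^2/16\cdot\Gamma_p(1/4)^4\pmod{p^3}$. Confirming the simultaneous vanishing of the $p^0$-coefficient, extracting the correct $p^2$-coefficient $-1/16$, and ensuring compatibility with the $p\equiv 1\pmod 4$ derivation within a single framework, is the crux of the argument.
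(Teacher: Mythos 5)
Your proposal takes a genuinely different route from the paper (which writes the truncated sum as a $p$-adically perturbed ${}_3F_2$, kills or evaluates the perturbed series via Clausen's formula plus Gauss summation, and extracts the harmonic-sum correction from differences ${}_3F_2(C)-{}_3F_2(D)$ of two such perturbations). Your reversal $k\mapsto N-k$, the identification of the leading term with the terminating Dixon sum $\sum_k(-1)^k\binom{N}{k}^3$, and the $p$-integrality bookkeeping are all correct as far as they go. But the proposal has a genuine gap: the two computations on which the theorem actually rests are not carried out. First, the $H_k$- and $H_k^{(2)}$-weighted sums $\sum_k(-N)_k^3H_k/k!^3$ and $\sum_k(-N)_k^3\bigl(H_k^{(2)}+3H_k^2\bigr)/k!^3$ are only asserted to be ``parameter-derivatives of Dixon's identity.'' That is a real technique, but it is delicate precisely at your specialization: for $N$ odd the generic Dixon evaluation is a $0\cdot\infty$ expression (a $\Gamma(1-N)$ pole against a vanishing numerator), so the first- and second-order derivative terms --- which are exactly what must produce the nonzero answer $-\tfrac{p^2}{16}\Gamma_p\!\left(\tfrac14\right)^4$ when $p\equiv 3\bmod 4$ --- require a careful limit/derivative analysis that you have not performed, and no constant is extracted. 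You flag this yourself as ``the crux,'' which is an accurate self-assessment: without it there is no proof of the $p\equiv 3\bmod 4$ case.

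Second, even in the $p\equiv 1\bmod 4$ case the claimed cancellation is unverified. Your expression is a product of three pieces, each carrying its own order-$p$ and order-$p^2$ terms: the prefactor $\bigl[\binom{2N}{N}/4^N\bigr]^3$ (via Morley and the expansion of $2^{p-1}$, i.e.\ something like the paper's \eqref{eq:4power2}), the Taylor expansions of $\Gamma_p$ at $\tfrac14+\tfrac{3p}4$ and $\tfrac34+\tfrac p4$ coming from the Dixon value $(3N/2)!/(N/2)!^3$, and the $\tfrac{3p}2H_k$-weighted correction sum. The first-order terms involve $G_1(0)$, $G_1\!\left(\tfrac14\right)$, the Fermat quotient of $2$, and the weighted Dixon sum; showing they cancel requires an identity of the type of the paper's Proposition~\ref{prop:G14G12} together with the explicit value of the weighted sum --- none of which appears in the proposal. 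In short, the skeleton is plausible and arguably more classical than the paper's perturbation argument, but as written it is a program rather than a proof: the decisive evaluations and the verification of the constants $-1$ and $-\tfrac1{16}$ are deferred exactly where the difficulty lies.
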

\noindent
The corresponding$\mod p^2$ congruence when $p\equiv 1\mod 4$ was first established by van Hamme \cite{vanHamme}.
  We obtain  a similar result for $-\frac 18$ at ordinary primes.
\medskip

We briefly outline the paper. Section \ref{ss:padic} includes the basic properties
of $p$-adic Gamma functions we need, including
Theorem~\ref{truncate} which we use repeatedly, and Lemma~\ref{GammaP}
which allows us to convert  $\G$-quotients to $\G_p$-quotients,  which  can then
be $p$-adically approximated by Theorem~\ref{truncate}. We give a simple application
of this in \S$2.2$. In \S\ref{ss:K-super} we use the Kazandzidis supercongruence
of binomial coefficients to prove Proposition~\ref{prop:G14G12}, a relation
between a $p$-adic logarithm and logarithmic derivatives of $\G_p$
that is used in the proof of Theorem~\ref{thm:1}. Section \ref{ss:vanhamme} is devoted
to the proof of Theorem~\ref{thm:2} and a corollary.
In \S \ref{ss:other} we prove Theorem~\ref{thm:3} and another result of similar type.
The appendix, section \ref{ss:Appendix}, includes the hypergeometric transformation and evaluation formulae
we use and includes detailed statements of their domains of convergence.
Many of the theorems here were motivated by computations carried out using the open source software \texttt{Sage}.
We thank Heng Huat Chan for conversations which led to \S2.2, Wadim Zudilin for inspiring discussions  and the reference \cite{Slate}  and Jonas Kibelbek for his comments on an earlier version.

\section{$p$-adic Gamma functions and immediate applications}\label{ss:padic}

\subsection{Preliminaries}\label{ss:Preliminaries}
Throughout this paper {    $p\ge5$} is a prime, $v_p(\cdot)$ denotes the $p$-order and $|x|_p=p^{-v_p(x)}$ the $p$-adic norm.
The main results of this section that are used later in the paper are
Theorem~\ref{truncate} and Lemma~\ref{GammaP}. If the reader wishes, she may
assume these, noting that $G_k(a):= \Gamma^{(k)}_p(a) /\Gamma_p(a)$, and proceed to \S\ref{ss:K-super}.

For the sake of completeness,
we recall some basic properties of the Morita $p$-adic
Gamma function $\G_p(x)$ for $x\in \Z_p$.
None of the results of this subsection are new, but we gather them here for our convenience and
hopefully that of the reader.
For more details, see \cite{M} and \cite[\S 11.5]{Cohen}.
As an immediate application we reprove and generalize a result from \cite{CKKO}.

\begin{propdef}\label{pGamma}
\begin{itemize}
\item[1).] $\G_p(0)=1$
\item[2).] $\displaystyle \frac{\G_p(x+1)}{\G_p(x)}=\left \{  \begin{array}{lll} -x & \text{if } |x|_p=1\\
-1& \text{if } |x|_p<1. \end{array}\right .$
\item[3).] $\G_p(x)\G_p(1-x)=(-1)^{a_0(x)}$ where
$a_0(x)\in \{1,2,\cdots, p\}$ satisfies $x-a_0(x)\equiv 0 \mod p$.
\item[4).] $\displaystyle\G_p\left(\frac12\right)^2=(-1)^{\frac{p+1}2}$.
\end{itemize}
\end{propdef}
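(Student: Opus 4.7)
The plan is to fix the standard definition of $\Gamma_p$ and then derive each of the four items in turn, with the only genuine content being the reflection formula (3); (1), (2) and (4) will follow essentially from unwinding definitions. Concretely, I would first recall Morita's definition on positive integers,
\[
\Gamma_p(n) := (-1)^n \prod_{\substack{0<k<n \\ p\nmid k}} k,
\]
and note that this extends uniquely to a continuous function on $\Z_p$ because the right hand side, viewed as a function of $n$, is uniformly continuous on the dense subset $\Z_{>0} \subset \Z_p$ (this uniform continuity statement is the heart of Morita's construction and can be quoted).

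Given this, (1) is immediate from the empty product convention. For (2), at a positive integer $n$ the defining product for $\Gamma_p(n+1)$ either gains the factor $n$ (when $p\nmid n$) or does not (when $p\mid n$); combined with the sign flip $(-1)^{n+1}/(-1)^n = -1$ this gives exactly the two cases. Since the ratio $\Gamma_p(x+1)/\Gamma_p(x)$ is continuous in $x$, and both $\{|x|_p = 1\}$ and $\{|x|_p < 1\}$ are clopen in $\Z_p$ with positive integers dense in each, continuity extends the identity from $\Z_{>0}$ to all of $\Z_p$.

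For (3), the cleanest route is to verify it first on the nonempty clopen set $\{x \in \Z_p : a_0(x) = x_0\}$ for each fixed $x_0 \in \{1,\ldots,p\}$, and the identity at $x = x_0$ reduces by direct calculation to Wilson's theorem $(p-1)! \equiv -1 \pmod p$. To pass from integers in this set to the whole set, I would check that both sides of (3) satisfy the same functional equation under the shift $x \mapsto x+p$ (which preserves $a_0(x)$) using iterated applications of (2), and then conclude by continuity on each of the $p$ residue classes. Property (4) is then obtained by plugging $x = \tfrac12$ into (3): since $p$ is odd, the unique $a_0 \in \{1,\ldots,p\}$ with $2a_0 \equiv 1 \pmod p$ is $a_0(\tfrac12) = (p+1)/2$, giving $\Gamma_p(\tfrac12)^2 = (-1)^{(p+1)/2}$.

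The only step requiring real care is (3). The main obstacle is organizing the continuity argument so that the sign $(-1)^{a_0(x)}$, which is locally constant on each residue class mod $p$ but jumps between classes, is correctly matched with the two-branch behavior of (2). A small but necessary bookkeeping check is that as $x$ crosses from one residue class to the next, the change of $a_0(x)$ by $1$ is exactly compensated by the sign in (2), so that one really does have a single continuous identity on $\Z_p$ rather than $p$ separate ones that fail to glue.
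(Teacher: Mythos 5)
The paper does not actually prove this Proposition/Definition: it is presented as a list of standard properties of Morita's $\G_p$, with the reader referred to Morita's paper and Cohen's book, so there is no argument of the paper's to compare against. Your reconstruction from the product definition is correct and is essentially the standard one. Two small remarks on the only substantive item, (3). First, for $x_0\in\{2,\dots,p\}$ the value $\G_p(1-x_0)$ sits at a non-positive integer, where the product formula does not apply; the ``direct calculation'' really means running (2) backwards from $\G_p(0)=1$, which yields $\G_p(1-x_0)=1/(x_0-1)!$ and hence $\G_p(x_0)\G_p(1-x_0)=(-1)^{x_0}$ --- so Wilson's theorem enters only through the limit $\G_p(p^r)\to 1$ that justifies $\G_p(0)=1$ in the first place, not through the reflection identity itself. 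Second, your two-step gluing (base point in each residue class, then the shift $x\mapsto x+p$) can be streamlined: setting $f(x)=\G_p(x)\G_p(1-x)$, item (2) gives $f(x+1)=-f(x)$ when $|x|_p=1$ and $f(x+1)=f(x)$ when $|x|_p<1$, which is exactly the recursion satisfied by $(-1)^{a_0(x)}$ (the case $a_0(x)=p$, $a_0(x+1)=1$ is consistent because $p$ is odd); a single base case $f(1)=\G_p(1)\G_p(0)=-1$ then propagates to all positive integers and, by continuity of $f$ and local constancy of $(-1)^{a_0}$, to all of $\Z_p$. This does the sign bookkeeping you flag as the delicate point automatically. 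Items (1), (2), (4) are fine as you have them.
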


We use Theorem~\ref{truncate} repeatedly in this paper.
Its proof involves essentially reproving:

\begin{theorem}[Morita, Barsky] For $a\in \Z_p$ the function
$x \mapsto \G_p(a+x)$ is locally analytic on
$\Z_p$  and converges for $v_p(x) \geq \displaystyle \frac1{p}+\frac1{p-1}$.
\end{theorem}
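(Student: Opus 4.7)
The plan is to establish the local analyticity of $\G_p$ around any $a \in \Z_p$ by combining Mahler's theorem with a quantitative estimate on the Mahler coefficients that is sharp enough to yield the stated convergence region. Since $\G_p\colon \Z_p \to \Z_p^\times$ is continuous, Mahler's theorem gives a uniformly convergent expansion $\G_p(a+x) = \sum_{n \ge 0} c_n \binom{x}{n}$ with $c_n = \Delta^n\G_p(a) \in \Z_p$, valid for $x \in \Z_p$; the issue is to upgrade this continuity statement to genuine analyticity and track the convergence radius.

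The key input is a bound of the shape $v_p(c_n) \ge \tfrac{n}{p(p-1)} - O(\log_p n)$. To obtain this I would iterate the functional equation of Proposition~\ref{pGamma}: grouping residues into blocks of $p$ gives
\[
\G_p(a+p)/\G_p(a) \,=\, (-1)^p \!\!\prod_{\substack{0 \le k < p \\ p \,\nmid\, (a+k)}} \!\!(a+k),
\]
a polynomial-like expression in $a$ whose $p$-fold finite difference is divisible by $p$. An induction on $j$ then shows that $\Delta^{p^j}\G_p$ vanishes modulo $p^j$ up to controlled corrections, which translates, after interpolating across intermediate $n$, into the desired lower bound on $v_p(c_n)$.

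To convert the Mahler expansion into a Taylor series I would expand $\binom{x}{n} = \sum_m s(n,m)\,x^m/n!$ using signed Stirling numbers of the first kind and reindex, obtaining $\G_p(a+x) = \sum_m d_m(a)\,x^m$ with
\[
v_p(d_m(a)) \,\ge\, \inf_{n \ge m}\bigl[v_p(c_n) - v_p(n!)\bigr].
\]
Legendre's formula $v_p(n!) = (n - s_p(n))/(p-1)$ together with the Mahler estimate then gives, for $v_p(x) \ge \tfrac{1}{p}+\tfrac{1}{p-1}$, a general term $d_m(a)\,x^m$ whose $p$-adic valuation grows at least like $m/p$, which is exactly the termwise convergence needed on the stated disk.

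The main obstacle is the sharp Mahler-coefficient bound: the crude estimate $v_p(c_n) \ge 0$ coming from $c_n \in \Z_p$ is too weak even to make the $d_m(a)$ well-defined, let alone convergent as a power series. The sharp argument must exploit the $p$-block structure of the Morita functional equation—essentially the fact that on each residue disk of radius $p^{-m}$ the function $\G_p$ admits a polynomial approximation modulo $p^m$, which is the same phenomenon underlying Diamond's $p$-adic log-gamma. Faithfully tracking the constants $\tfrac{1}{p}$ (from the block-of-$p$ grouping) and $\tfrac{1}{p-1}$ (from $v_p(n!)$) through this analysis is the technical heart of the proof.
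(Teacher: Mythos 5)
Your route (Mahler expansion plus an Amice-type estimate on the coefficients $c_n=\Delta^n\G_p(a)$) is genuinely different from the paper's, which does not prove this theorem at all: it cites Morita and Barsky and instead derives the quantitative Taylor-coefficient bounds it actually needs (Lemma~\ref{valGi}, Corollary~\ref{vpGi}, Proposition~\ref{prop:valLB}) from the Robert--Zuber expansion of $\log_p\G_p$ on $p\Z_p$ together with the Clausen--von Staudt theorem. Your approach could in principle work, but as written it has a concrete gap at the conversion step. With your claimed estimate $v_p(c_n)\geq \frac{n}{p(p-1)}-O(\log_p n)$ and Legendre's formula $v_p(n!)=\frac{n-s_p(n)}{p-1}$, the quantity you take the infimum of satisfies
\[
v_p(c_n)-v_p(n!)\;\approx\;\frac{n}{p(p-1)}-\frac{n}{p-1}\;=\;-\frac{n}{p}\;\longrightarrow\;-\infty ,
\]
so $\inf_{n\geq m}\bigl[v_p(c_n)-v_p(n!)\bigr]=-\infty$ and your displayed bound on $v_p(d_m(a))$ is vacuous: it does not even show the Taylor coefficients are well defined, let alone give the radius $\frac1p+\frac1{p-1}$. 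The inequality you wrote implicitly uses only $v_p(s(n,m))\geq 0$; the argument can only be saved by exploiting the extra valuation that $\binom{x}{n}$ acquires on the target disk from its roots $0,p,2p,\dots$ lying inside it. Concretely, a maximum-modulus (Cauchy) estimate on the disk $v_p(x)\geq h$ gives $v_p\bigl(s(n,m)/n!\bigr)\geq h\bigl(1+\lfloor (n-1)/p\rfloor\bigr)-v_p(n!)-mh$, and only with this extra term $h\lfloor n/p\rfloor$ does the combination with $v_p(c_n)\gtrsim \frac{n}{p(p-1)}$ produce a finite bound $v_p(d_m)\geq C-mh$ at $h=\frac1p+\frac1{p-1}$. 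This is exactly the Amice orthonormal-basis refinement you allude to in your last paragraph but never actually deploy.

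There is a second gap in the input you feed into this machinery: the proposed derivation of the Mahler bound. Knowing that $\Delta^{p^j}\G_p$ vanishes modulo $p^j$ gives $v_p(c_{p^j})\gtrsim j\sim\log_p n$, which is logarithmic in $n$; no amount of ``interpolating across intermediate $n$'' turns a logarithmic lower bound into the linear bound $\frac{n}{p(p-1)}$ that your argument requires. To get a linear-in-$n$ estimate one must show that on each residue disk $a+p\Z_p$ the function $\G_p$ is congruent modulo $p^{k}$ to a polynomial of degree controlled linearly in $k$ (this is where the block-of-$p$ structure of the functional equation really enters), or else bypass Mahler coefficients entirely and argue via $\log_p\G_p$ as Robert--Zuber do. As it stands, both the coefficient estimate and the passage from it to analyticity are asserted rather than proved, and the one inequality you do write down is too weak to carry the argument.
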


Recall the $p$-adic logarithm
$$\log_p(1+x)=\sum_{n=1}^\infty \frac{(-1)^{n+1}x^n}{n},$$ which converges for $x\in \mathbb C_p$ with $|x|_p<1$.

Set $G_k(a) = \G^{(k)}_p(a)/\G_p(a)$. In particular, $G_0(a)=1$.
\begin{cor} \label{reflect}  For a in $\Z_p$, $G_1(a)=G_1(1-a)$,
and $G_2(a)+G_2(1-a) =2G^2_1(a)$.
\end{cor}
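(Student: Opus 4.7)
The plan is to differentiate the reflection formula in property 3) of Proposition/Definition~\ref{pGamma}. Although the right side $(-1)^{a_0(x)}$ is not globally constant, the function $a_0(x)$ is locally constant on $\Z_p$ (it depends only on the residue class of $x$ modulo $p$). Hence on every residue disk the product $\Gamma_p(x)\Gamma_p(1-x)$ is a nonzero constant ($\pm 1$), and by the Morita--Barsky theorem just stated, $\Gamma_p(a+x)$ is locally analytic in $x$, so we are free to differentiate this identity term-by-term.

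First I would differentiate $\Gamma_p(x)\Gamma_p(1-x)=(-1)^{a_0(x)}$ once with respect to $x$, producing
\[
\Gamma'_p(x)\Gamma_p(1-x)-\Gamma_p(x)\Gamma'_p(1-x)=0.
\]
Dividing by $\Gamma_p(x)\Gamma_p(1-x)=\pm 1$ gives $G_1(a)=G_1(1-a)$ for every $a\in\Z_p$, which is the first assertion.

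Next I would differentiate the reflection identity a second time. This yields
\[
\Gamma''_p(x)\Gamma_p(1-x)-2\Gamma'_p(x)\Gamma'_p(1-x)+\Gamma_p(x)\Gamma''_p(1-x)=0.
\]
Dividing through by $\Gamma_p(x)\Gamma_p(1-x)$ gives
\[
G_2(a)+G_2(1-a)=2G_1(a)\,G_1(1-a),
\]
and substituting the first identity $G_1(1-a)=G_1(a)$ on the right produces $G_2(a)+G_2(1-a)=2G_1^2(a)$, as desired.

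The only delicate point is the justification for differentiating through a right-hand side that takes the values $\pm 1$. Because $a_0(x)$ is constant on each of the finitely many residue classes modulo $p$, the right-hand side has derivative zero at every point of $\Z_p\setminus\Z$ in the locally analytic sense, and both $G_1$ and $G_2$ are themselves locally analytic, so the resulting identities extend by continuity to all of $\Z_p$. No serious obstacle remains beyond this bookkeeping.
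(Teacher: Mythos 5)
Your proof is correct and follows essentially the same route as the paper: both differentiate the reflection formula $\Gamma_p(x)\Gamma_p(1-x)=(-1)^{a_0(x)}$ twice, using the local constancy of $a_0(x)$ to kill the right-hand side. The paper first takes $\log_p$ and differentiates the logarithmic derivatives, while you apply the product rule directly and then divide by $\Gamma_p(x)\Gamma_p(1-x)$; these are the same computation in different clothing (your parenthetical restriction to $\Z_p\setminus\Z$ is unnecessary, since $a_0$ is locally constant on all of $\Z_p$).
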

\begin{proof}
Take $\log_p$ of 3) of Proposition/Definition~\ref{pGamma} and, on
observing that $a_0(x)$ is constant in a small enough neighborhood of $x$,
we differentiate this
to obtain
$G_1(x)=G_1(1-x)$. Rewriting this in terms of $p$-adic Gamma functions and differentiating again
gives $G_2(x)-G^2_1(x) = -G_2(1-x) +G^2_1(1-x)$. The second part follows from the first part.
\end{proof}

\begin{theorem}\label{RZ}[Robert-Zuber, \cite{RZ95}]
Assume $p\ge 5$ is a prime. The function $\log_p \G_p(x)$ is an odd  analytic
function on $p\Z_p$ such that
\begin{equation}\label{eq:RZ1}
\log_p \G_p(x)=\l_0 x -\sum_{m\ge 1}\frac{\l_m}{2m(2m+1)}\cdot x^{2m+1}
\end{equation}
where $\l_1\in \Z_p$
and $p\l_m\in \Z_p$ when $m\ge 2$.
\end{theorem}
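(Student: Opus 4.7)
The plan is to assemble the three assertions---analyticity on $p\Z_p$, oddness, and integrality of the rescaled Taylor coefficients---from the Morita--Barsky analyticity theorem together with the functional equations recorded in Proposition/Definition~\ref{pGamma}.

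For analyticity, Morita--Barsky applied at $a=0$ gives
$$\G_p(x) = \sum_{k \geq 0} \frac{\G_p^{(k)}(0)}{k!} x^k$$
on the disk $v_p(x) \geq \tfrac{1}{p} + \tfrac{1}{p-1}$, which for $p \geq 5$ contains $p\Z_p$. Since $\G_p(0) = 1$ and $\G_p$ is continuous, $|\G_p(x) - 1|_p < 1$ for every $x \in p\Z_p$, so $\log_p \G_p(x)$ is analytic on $p\Z_p$ and vanishes at the origin. To see it is odd, for $x \in p\Z_p$ I combine two ingredients of Proposition/Definition~\ref{pGamma}: the reflection formula yields $\G_p(x)\G_p(1-x) = (-1)^{a_0(x)} = (-1)^p = -1$, because $x \equiv 0 \pmod p$ forces $a_0(x) = p$, odd for $p \geq 5$; and the functional equation gives $\G_p(1-x) = -\G_p(-x)$ because $|-x|_p < 1$. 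Multiplying, $\G_p(x)\G_p(-x) = 1$, and applying $\log_p$ yields $\log_p\G_p(x) + \log_p\G_p(-x) = 0$. Hence the Taylor series at $0$ collapses to odd powers, and I simply \emph{define} $\l_0$ and $\l_m$ so that
$$\log_p \G_p(x) = \l_0 x - \sum_{m\geq 1} \frac{\l_m}{2m(2m+1)} x^{2m+1},$$
which is automatic from the analytic structure.

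The main obstacle is the integrality statement: $\l_1 \in \Z_p$ and $p\l_m \in \Z_p$ for $m \geq 2$. Following Robert--Zuber, I would differentiate the expansion twice to obtain
$$\left(\frac{\G_p'(x)}{\G_p(x)}\right)' \;=\; \frac{\G_p''(x)}{\G_p(x)} - \left(\frac{\G_p'(x)}{\G_p(x)}\right)^2 \;=\; -\sum_{m \geq 1} \l_m x^{2m-1},$$
so that each $\l_m$ arises from an odd-order derivative of the $p$-adic digamma at $0$, equivalently from Volkenborn/Diamond-type $p$-adic $\zeta$-integrals. The divisors $2m(2m+1)$ in the denominators of the stated expansion are precisely what is needed to absorb the factorials introduced by two successive integrations, so integrality of the rescaled coefficients reduces to a $p$-adic Clausen--von Staudt--type bound on the denominators of these $p$-adic $\zeta$-values. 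The technical heart of the Robert--Zuber argument is exactly this denominator control; the analyticity and oddness steps above are essentially formal consequences of the structure of $\G_p$ already in hand.
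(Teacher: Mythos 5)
This theorem is quoted in the paper from Robert--Zuber \cite{RZ95} and is not proved there, so there is no internal proof to compare against; your attempt has to stand on its own. The first two thirds of it do: the analyticity argument (Morita--Barsky at $a=0$, with $\tfrac1p+\tfrac1{p-1}<1$ for $p\ge 5$ so that $p\Z_p$ lies in the disk of convergence, and $\G_p(x)\in 1+p\Z_p$ so that $\log_p$ applies) and the oddness argument (combining $\G_p(x)\G_p(1-x)=(-1)^{a_0(x)}=(-1)^p=-1$ with $\G_p(1-x)=-\G_p(-x)$ for $|x|_p<1$ to get $\G_p(x)\G_p(-x)=1$) are both correct and cleanly deduced from Proposition/Definition~\ref{pGamma}. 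Defining $\l_0,\l_m$ by the resulting odd expansion is then legitimate bookkeeping.

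The gap is the integrality claim, which is the actual content of the theorem: you never prove $\l_1\in\Z_p$ and $p\l_m\in\Z_p$ for $m\ge 2$. Your computation $\bigl(\log_p\G_p\bigr)''(x)=-\sum_{m\ge1}\l_m x^{2m-1}$ is right, and it correctly identifies the $\l_m$ as values of a $p$-adic polygamma-type function, but the sentence reducing their integrality to ``a $p$-adic Clausen--von Staudt--type bound on the denominators of these $p$-adic $\zeta$-values'' is a statement of the problem, not a solution: no integral representation is written down, no denominator bound is derived, and the phrase ``following Robert--Zuber, I would differentiate'' signals a plan rather than an execution. The precise input you need is the congruence $\l_m\equiv B_{2m}\pmod{\Z_p}$ (Lemma~1 of \cite{RZ95}, which the paper itself invokes in the proof of Lemma~\ref{valGi}); granting that, von Staudt--Clausen gives $v_p(B_2)=v_p(1/6)=0$ for $p\ge5$, hence $\l_1\in\Z_p$, and $v_p(B_{2m})\ge-1$ in general, hence $p\l_m\in\Z_p$. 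Without either proving that congruence (e.g.\ via the Volkenborn-integral expansion of Diamond's log-gamma and its comparison with Morita's $\G_p$) or explicitly citing it as an external input, the proof of the theorem is incomplete.
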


\begin{cor}\label{G2G1}
$G_2(0)= G^2_1(0)$.
\end{cor}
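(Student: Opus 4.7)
My plan is to read off $G_1(0)$ and $G_2(0)$ directly from the power series of $\G_p$ near $0$, using Theorem~\ref{RZ}. Since $\G_p(0)=1$ by Proposition/Definition~\ref{pGamma}(1), we have simply
$$G_1(0)=\G_p'(0), \qquad G_2(0)=\G_p''(0),$$
so the task reduces to identifying the coefficients of $x$ and $x^2$ in the Taylor expansion of $\G_p(x)$ at $x=0$.

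Theorem~\ref{RZ} tells us that on $p\Zp$ the function $\log_p\G_p(x)$ is \emph{odd} and analytic, with expansion
$$\log_p\G_p(x)=\l_0 x-\sum_{m\ge 1}\frac{\l_m}{2m(2m+1)}x^{2m+1}=\l_0 x+O(x^3).$$
The essential point is that oddness kills the $x^2$ coefficient. Since $\log_p\G_p(0)=\log_p 1=0$, for $x$ sufficiently close to $0$ in $p\Zp$ the value $\G_p(x)$ lies in the domain of convergence of $\exp_p$, and $\G_p(x)=\exp_p(\log_p\G_p(x))$. Expanding the exponential of the series above yields
$$\G_p(x)=1+\l_0 x+\frac{\l_0^2}{2}x^2+O(x^3).$$

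Reading off coefficients, $\G_p'(0)=\l_0$ and $\G_p''(0)=\l_0^2$, whence $G_1(0)=\l_0$ and $G_2(0)=\l_0^2=G_1(0)^2$, as desired. There is no real obstacle here beyond checking that $\exp_p\log_p=\mathrm{id}$ on the relevant neighborhood, which is immediate from the analyticity asserted in Theorem~\ref{RZ} together with $\G_p(0)=1$. (One could alternatively note that Corollary~\ref{reflect} gives $G_2(0)+G_2(1)=2G_1(0)^2$, but this does not by itself yield $G_2(0)=G_2(1)$, so the Robert--Zuber expansion is the cleaner route.)
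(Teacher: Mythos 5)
Your proof is correct and rests on exactly the same key fact as the paper's: the Robert--Zuber expansion of $\log_p\G_p$ is odd, so the $x^2$ coefficient vanishes. The paper simply differentiates \eqref{eq:RZ1} twice and evaluates at $0$ (using $(\log\G_p)''=G_2-G_1^2$), whereas you exponentiate and read off Taylor coefficients; this is the same computation in a different order, and your remark about $\exp_p\circ\log_p=\mathrm{id}$ near $\G_p(0)=1$ is the only (easily dispatched) extra check your route requires.
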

\begin{proof} Differentiate \eqref{eq:RZ1} twice and plug in $x=0$.
\end{proof}

\begin{lemma}\label{derivs}
The $k$th derivative of $\log \G_p(x)$ is a polynomial
with integer coefficients in the $G_i(x)$ for $i\leq k$ and the
sum of the subscripts in each monomial is $k$. The coefficient
of $G_k(x)$ is $1$.
\end{lemma}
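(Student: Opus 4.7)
I would prove Lemma~\ref{derivs} by induction on $k$, using the single ``differentiation rule'' for the $G_i$'s, namely that
\[
G_i(x)' \;=\; \frac{\G_p^{(i+1)}(x)}{\G_p(x)} \;-\; \frac{\G_p^{(i)}(x)}{\G_p(x)}\cdot \frac{\G_p'(x)}{\G_p(x)} \;=\; G_{i+1}(x) \;-\; G_i(x)\,G_1(x).
\]
This is the engine of the whole argument: differentiating any $G_i$ yields an integer $\Z$-linear combination of $G$-monomials in which each monomial's subscript-sum has increased by exactly one (the $G_{i+1}$ term raises $i$ to $i+1$, while the $-G_iG_1$ term adds a factor of $G_1$ with subscript $1$).

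\textbf{Base case.} For $k=1$ we have $(\log \G_p(x))' = \G_p'(x)/\G_p(x) = G_1(x)$, which is a polynomial with integer coefficients in $G_1$ (just $G_1$ itself), sum of subscripts $1$, and coefficient of $G_1$ equal to $1$.

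\textbf{Inductive step.} Assume the $k$th derivative has the form
\[
\bigl(\log \G_p(x)\bigr)^{(k)} \;=\; G_k(x) \;+\; \sum_{\alpha} c_\alpha \prod_j G_{i_j^{(\alpha)}}(x),
\]
where each $c_\alpha \in \Z$, each product runs over at least two indices $i_j^{(\alpha)} \ge 1$, and in every term the subscripts sum to $k$. Differentiating this using the product rule and the identity $G_i' = G_{i+1} - G_i G_1$ shows:
\begin{itemize}
\item integrality of coefficients is preserved (we only add and subtract monomials with integer coefficients);
\item every resulting monomial has subscript-sum $k+1$, since each differentiation step replaces one factor $G_i$ by either $G_{i+1}$ (raising the sum by $1$) or by $-G_iG_1$ (raising the sum by $1$);
\item the only way to produce the \emph{single-factor} monomial $G_{k+1}$ is from differentiating a single-factor monomial in the $k$th derivative, and the only such monomial is $G_k$ (subscript-sum must equal $k$), which has coefficient $1$ by induction; its derivative contributes $+G_{k+1}$ plus the multi-factor term $-G_kG_1$.
\end{itemize}
Hence the $(k+1)$st derivative has the required form with coefficient $1$ on $G_{k+1}$.

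\textbf{Main obstacle (or lack thereof).} There is essentially no obstacle; the only point that needs care is verifying that no multi-factor monomial in the $k$th derivative can differentiate into the single-factor monomial $G_{k+1}$, which is immediate because the product rule applied to a product of $\ge 2$ factors always yields a product of $\ge 2$ factors. The rest is bookkeeping with the rule $G_i' = G_{i+1} - G_iG_1$.
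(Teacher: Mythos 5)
Your proposal is correct and follows essentially the same route as the paper: induction on $k$ driven by the identity $G_i' = G_{i+1} - G_i G_1$, with the observation that only the single-factor monomial $G_k$ can produce the single-factor term $G_{k+1}$. The paper phrases the general monomial's differentiation via the quotient rule on $\prod_j \G_p^{(i_j)}/\G_p^{r}$, but this is the same computation.
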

\begin{proof} We induct on $k$, the case $k=1$ being trivial.
Suppose the statement is true for $k-1$, that is the $k-1$st derivative of
$\log \G_p(x)$ is of the form
$\displaystyle  \sum_l a_{r_l}\prod^{r_l}_{j=1}
G_{i_j}(x) $ where $a_{r_l} \in {\mathbb Z}$, the subscripts
in each monomial add to $k-1$
and the term $\displaystyle G_{k-1}(x) $ appears with coefficient
$1$.
The derivative of $G_{k-1}(x)$ is
$$ \displaystyle \frac{ \G_p^{(k)}(x) \G_p(x) - \G_p^{(k-1)}(x) \G_p^{(1)}(x)  } {\G_p(x)^2}
= G_k(x) -G_{k-1}(x)G_1(x).$$ Since no other term in the sum includes a $k-1$st derivative,
this completes the induction for  the coefficient
of $G_k(x)$. Completing the rest of the induction involves writing
the monomial $ \prod^{r_l}_{j=1} G_{i_j}(x) $
as $\displaystyle\frac{ \G_p^{(i_1)}(x)  \G_p^{(i_2)}(x) \cdots \G_p^{(i_{r_l})}(x)}{ (\G_p(x))^{r_l}}$
and employing the quotient and product rules.
\end{proof}
\begin{lemma} \label{factorial}$\displaystyle v_p\left(\frac{1}{k!}\right) \geq -\frac{k}{p-1}$.
\end{lemma}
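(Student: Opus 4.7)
The plan is to invoke Legendre's classical formula for the $p$-adic valuation of a factorial,
$$v_p(k!)=\sum_{i\ge 1}\left\lfloor \frac{k}{p^i}\right\rfloor,$$
and then simply bound each floor function by the quantity inside it. First I would state the formula (it follows by counting the multiples of $p^i$ among $1,2,\ldots,k$ for each $i$, each contributing one factor of $p$). Next I would drop the floor to obtain
$$v_p(k!)\le \sum_{i\ge 1}\frac{k}{p^i}=\frac{k}{p-1}$$
by summing the geometric series. Negating gives $v_p(1/k!)=-v_p(k!)\ge -k/(p-1)$, which is the claim.

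There is essentially no obstacle here; the lemma is a standard elementary estimate. The only thing worth verifying is that Legendre's formula is the right input, but this is entirely routine. If one wished to avoid even citing Legendre, one could instead give the one-line argument that writing $k$ in base $p$ as $k=\sum a_i p^i$ with digits $0\le a_i\le p-1$, one has the exact identity $v_p(k!)=(k-s_p(k))/(p-1)$ where $s_p(k)=\sum a_i$ is the digit sum, and since $s_p(k)\ge 0$ this immediately yields $v_p(k!)\le k/(p-1)$. Either way, the proof is a couple of lines.
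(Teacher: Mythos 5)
Your proposal is correct and is essentially identical to the paper's proof: both invoke Legendre's formula $v_p(k!)=\sum_{r\ge 1}\left[k/p^r\right]$, bound each term by $k/p^r$, and sum the geometric series to get $v_p(k!)\le k/(p-1)$. Nothing further is needed.
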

\begin{proof}
$\displaystyle  v_p(k!) =
\sum^{\infty}_{r=1} \left[\frac{k}{p^r}\right] \leq \sum^{\infty}_{r=1} \frac{k}{p^r} =
\frac{k}{p-1}.$
\end{proof}

\begin{lemma}\label{valGi}
Let  $p\geq 5$. Then $v_p(G_1(0)) \geq 0$. Also,
$\displaystyle v_p(G_i(0)) \geq -\left[\frac{i}{p}\right]$ for $i>1$. In particular, $v_p(G_i(0)) \geq 0$ for
$i<p$
and $v_p(G_{p}(0))=-1$.
\end{lemma}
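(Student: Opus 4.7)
The plan is to induct on $i$, using the Robert--Zuber Taylor expansion of $\log_p \G_p$ around $0$ (Theorem~\ref{RZ}) together with Lemma~\ref{derivs}. Write $L_k := (\log_p \G_p)^{(k)}(0)$ for the $k$th derivative at $0$. By Lemma~\ref{derivs},
$$L_k = G_k(0) + P_k\bigl(G_1(0),\ldots,G_{k-1}(0)\bigr),$$
where $P_k$ has integer coefficients and every monomial in $P_k$ is a product $\prod_j G_{i_j}(0)$ with $i_j<k$ and $\sum_j i_j = k$. Hence
$$G_k(0) = L_k - P_k\bigl(G_1(0),\ldots,G_{k-1}(0)\bigr),$$
so $v_p(G_k(0)) \geq \min\bigl(v_p(L_k),\, v_p(P_k)\bigr)$, and it suffices to bound each piece.

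The base case $k=1$ gives $G_1(0)=L_1=\lambda_0$, which lies in $\Z_p$ because $\log_p\G_p$ is analytic on $p\Z_p$. For the inductive step, assume $v_p(G_i(0))\geq -[i/p]$ for all $i<k$. For each monomial $\prod_j G_{i_j}(0)$ appearing in $P_k$, the standard sub-additivity $\sum_j [i_j/p] \leq [\sum_j i_j /p] = [k/p]$ combined with the inductive hypothesis yields $v_p\bigl(\prod_j G_{i_j}(0)\bigr) \geq -\sum_j [i_j/p] \geq -[k/p]$, so $v_p(P_k) \geq -[k/p]$.

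For the $L_k$ piece, use the Robert--Zuber expansion. If $k$ is even then $L_k=0$ because $\log_p\G_p$ is an odd function. If $k = 2m+1$ is odd, a short computation with \eqref{eq:RZ1} gives
$$L_{2m+1} = -(2m-1)!\,\lambda_m.$$
Using $\lambda_1 \in \Z_p$ and $p\lambda_m \in \Z_p$ for $m\geq 2$ from Theorem~\ref{RZ}, together with Legendre's formula for $v_p((2m-1)!)$, one verifies $v_p(L_k) \geq -[k/p]$. Combining the two estimates gives $v_p(G_k(0)) \geq -[k/p]$, closing the induction.

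Finally, for the sharp statement $v_p(G_p(0))=-1$, take $k=p=2m+1$ with $m=(p-1)/2$, so $L_p = -(p-2)!\,\lambda_{(p-1)/2}$. Here $v_p((p-2)!)=0$ and $\lambda_{(p-1)/2}$ has $v_p=-1$ exactly (this is where the factor of $p$ in $p\lambda_m\in\Z_p$ is genuinely needed), while the polynomial part $P_p$ lies in $\Z_p$ by the bound already established for $i<p$; thus $v_p(G_p(0))=v_p(L_p)=-1$. The main obstacle is arithmetic rather than conceptual: tracking the denominators of the $\lambda_m$ through Theorem~\ref{RZ} carefully enough to confirm $v_p(L_{2m+1})\geq -[(2m+1)/p]$ uniformly in $m$, and in particular to pin down the equality at $k=p$.
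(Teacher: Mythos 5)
Your overall architecture matches the paper's: induct on $k$, split $G_k(0)$ into the $k$th derivative $L_k$ of $\log_p\G_p$ at $0$ and the integral polynomial $P_k$ in the lower $G_i(0)$ supplied by Lemma~\ref{derivs}, and bound each piece. Your treatment of $P_k$ via $\sum_j[i_j/p]\le[k/p]$ is in fact a cleaner formalization of the paper's ``continuing in this fashion.'' But there is a genuine gap in the $L_k$ piece, and you have flagged it yourself without closing it. From $L_{2m+1}=-(2m-1)!\,\lambda_m$ and the only quantitative input you invoke from Theorem~\ref{RZ} --- namely $p\lambda_m\in\Z_p$ for $m\ge2$ --- you obtain only $v_p(L_{2m+1})\ge-1$. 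That is \emph{not} enough when $3\le 2m+1<p$: there you need $v_p(L_{2m+1})\ge-[(2m+1)/p]=0$, i.e.\ $\lambda_m\in\Z_p$. Without this the induction collapses downstream; for instance $v_p(G_3(0))\ge-1$ would permit the monomial $G_3(0)^2$ in $P_6$ to have valuation $-2$, destroying the claimed bound $v_p(G_6(0))\ge 0$ for $p\ge7$. The missing ingredient, which the paper supplies, is Lemma~1 of \cite{RZ95}, $\lambda_m\equiv B_{2m}\bmod\Z_p$, combined with the Clausen--von Staudt theorem: since the denominator of $B_{2m}$ is $\prod_{\ell-1\mid 2m}\ell$, one gets $v_p(\lambda_m)\ge0$ exactly when $(p-1)\nmid 2m$, hence for all $2m+1<p$.

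The same omission undermines your sharp claim $v_p(G_p(0))=-1$: the containment $p\lambda_{(p-1)/2}\in\Z_p$ gives only $v_p(\lambda_{(p-1)/2})\ge-1$, not equality. The equality again requires $\lambda_{(p-1)/2}\equiv B_{p-1}\bmod\Z_p$ together with $v_p(B_{p-1})=-1$, which holds because $p-1$ divides $p-1$ in von Staudt--Clausen. A smaller issue: your base case asserts $\lambda_0\in\Z_p$ ``because $\log_p\G_p$ is analytic on $p\Z_p$,'' but convergence of the series on $p\Z_p$ places no constraint on the linear coefficient by itself; the paper instead computes $\lambda_0=\G_p'(0)=\lim_{r\to\infty}(\G_p(p^r)-1)/p^r$ and invokes Wilson's theorem to see $\G_p(p^r)\equiv1\bmod p^r$. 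So the skeleton of your argument is the paper's, but the arithmetic core --- the Bernoulli-number control of the $\lambda_m$ --- is exactly the part left unproved.
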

\begin{proof}
It may be useful to the reader to note here that
$G_{i}(0)$ is related to the $i$th derivative of the expression
for $\log_p \G_p$ below, and is thus given in terms of
$\l_{ \frac{i-1}{2}}$.
From Theorem~\ref{RZ} we know that
$$\displaystyle \log \Gamma_p(x)= \lambda_0x -\sum_{m\geq 1} \frac{\lambda_m}{2m(2m+1)}\cdot
x^{2m+1}$$
is analytic for $x \in p{\mathbb Z}_p$.

Note $\lambda_0 = G_1(0) =\displaystyle \frac{\Gamma^{'}_p(0)}{\Gamma_p(0)} = \Gamma^{'}_p(0)
=\lim_{r \to \infty} \frac{ \Gamma_p(p^r)-1} {p^r}$.
Now $\Gamma_p(p^r) = (-1)^{p^r}\prod^{p^r}_{k=1, (k,p)=1} k$
is by Wilson's theorem $(-1)^{p^r}(-1)\equiv 1$ mod $p^r$
so $v_p(\Gamma^{'}_p(0))=v_p(G_1(0))\geq0$. (D. Thakur has pointed out to us that the only known
cases where this valuation is positive are  $p=5,13,563$ and in these cases $v_p(G_1(0))=1$.)

For $m\geq 1$
Lemma $1$ of \cite{RZ95}  gives $\lambda_m \equiv B_{2m}$
mod ${\mathbb Z}_p$ where $B_{2m}$ is the $2m$th Bernoulli number.
Take $2m$ derivatives of the  expression for $\log \Gamma_p(x)$, plug in
$x=0$
and use Lemma~\ref{derivs} to get that $G_{2m}(0)$ is a polynomial with integer coefficients
in the terms $G_k(0)$ for $k<2m$. Similarly,
$G_{2m+1}(0)$ is the difference between such a polynomial with $k<2m+1$
and
$\displaystyle \frac{(2m+1)!\lambda_{m}}{2m(2m+1)} = (2m-1)!\lambda_m$.
The Clausen-von Staudt Theorem
states that the denominator of $B_{2m}$ is $\displaystyle \prod_{\ell-1 | 2m}  \ell$.
So $v_p(G_i(0)) =0$ for $i<p$.
The denominator of $B_{p-1}$  is exactly divisible by $p$
 so $v_p(G_p(0)) =-1$. For   $p<i<2p$, $G_i(0)$ is the sum of
a polynomial
in the previous $G_j(0)$ (with $G_p(0)$ occurring
to at most first degree in any monomial by Lemma~\ref{derivs})  and
$(i-2)!\lambda_{\frac{i-1}{2}}$ which is integral in $p$. The formula for $G_{2p}(0)$
contains the term $G^2_p(0)$ which has $p$-adic valuation $-2$,
so $v_p(G_{2p}(0)) \geq -2$. Continuing in this fashion gives
$v_p(G_i(0)) \geq \displaystyle -\left[\frac{i}{p}\right]$.
\end{proof}
We immediately see:
\begin{cor}\label{vpGi} For $i<p$,
$\displaystyle
v_p\left(\frac{\G^{(i)}_p(0)}{i!}\right)=
v_p\left(\frac{G_i(0)}{i!}\right)=0$.
For all $i$, $\displaystyle
v_p\left(\frac{\G^{(i)}_p(0)}{i!}\right)=
v_p\left(\frac{G_i(0)}{i!}\right) \geq
-i\left(\frac{1}{p}+\frac{1}{p-1}\right)$.
For $x \in {\mathbb C}_p$ satisfying $v_p(x) >
\displaystyle \left(\frac{1}{p}+\frac{1}{p-1}\right)$, the Taylor series
$\G_p(x) = \displaystyle \sum^{\infty}_{k=0} \frac{ \G^{(k)}_p(0) }{k!} x^k$ converges.
Dividing by $\G_p(0)$ this becomes
$ \displaystyle \frac{\G_p(0+x)}{\G_p(0)} =
\sum^{\infty}_{k=0} \frac{ G_k(0) }{k!} x^k$.
\end{cor}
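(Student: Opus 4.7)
The plan is to package the previous three lemmas directly. Since $\G_p(0)=1$ by part 1) of Proposition/Definition~\ref{pGamma}, we have $G_k(0)=\G_p^{(k)}(0)/\G_p(0)=\G_p^{(k)}(0)$, so the two versions of each valuation statement coincide, and I only need to track $v_p(G_i(0)/i!)$.

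For the first assertion, note that when $i<p$ the integer $i!$ is coprime to $p$, so $v_p(i!)=0$. Lemma~\ref{valGi} already gives $v_p(G_i(0))=0$ in this range, and the equality $v_p(G_i(0)/i!)=0$ follows at once.

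For the second assertion, I would combine the two bounds. Lemma~\ref{valGi} yields $v_p(G_i(0))\geq -[i/p]\geq -i/p$ for every $i\geq 1$ (and trivially $v_p(G_0(0))=0$). Lemma~\ref{factorial} gives $v_p(1/i!)\geq -i/(p-1)$. Adding these,
\[
v_p\!\left(\frac{G_i(0)}{i!}\right)\geq -\frac{i}{p}-\frac{i}{p-1}=-i\left(\frac{1}{p}+\frac{1}{p-1}\right),
\]
which is the desired estimate.

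For the third assertion, I would invoke Morita--Barsky (the displayed theorem immediately following Proposition/Definition~\ref{pGamma}): $x\mapsto \G_p(0+x)$ is locally analytic on $\Z_p$ and converges on the disk $v_p(x)\geq \frac{1}{p}+\frac{1}{p-1}$, so it equals its Taylor expansion $\sum_{k=0}^\infty \frac{\G_p^{(k)}(0)}{k!}x^k$ there. Dividing by $\G_p(0)=1$ does not change anything and rewrites the series in terms of $G_k(0)$. Alternatively, convergence can be checked directly from the bound in the second assertion: for $v_p(x)>\frac{1}{p}+\frac{1}{p-1}$,
\[
v_p\!\left(\frac{G_k(0)}{k!}x^k\right)\geq k\left(v_p(x)-\frac{1}{p}-\frac{1}{p-1}\right)\longrightarrow \infty
\]
as $k\to\infty$, so the series converges in $\mathbb{C}_p$.

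There is no real obstacle here; the corollary is an immediate consequence of Lemmas~\ref{valGi} and \ref{factorial}, together with Proposition/Definition~\ref{pGamma}(1) to identify $G_k(0)$ with $\G_p^{(k)}(0)$ and the Morita--Barsky analyticity theorem to justify the Taylor expansion. The only small care needed is to explicitly separate the case $i<p$ (where the sharper equality holds) from the general case (where one settles for the inequality).
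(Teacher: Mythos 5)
Your proof is correct and is exactly the argument the paper intends: the corollary is stated with no proof beyond ``We immediately see,'' and your assembly of Lemma~\ref{valGi}, Lemma~\ref{factorial}, the identity $\G_p(0)=1$, and the Morita--Barsky analyticity theorem (or the direct term-by-term valuation estimate) is the expected justification. The only caveat---inherited from the paper itself rather than introduced by you---is that the displayed statement of Lemma~\ref{valGi} only asserts $v_p(G_i(0))\ge 0$ for $i<p$ (and the parenthetical remark about $p=5,13,563$ shows $v_p(G_1(0))$ can be positive), so the claimed equality for $i<p$ really rests on the sentence ``$v_p(G_i(0))=0$ for $i<p$'' inside that lemma's proof rather than on its stated conclusion.
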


We `transfer' this result to arbitrary $a \in \Q \cap \Z_p$.
As the proof of the Proposition~\ref{prop:valLB} is a routine exercise in
expansions of nonarchimedean series about different points, we do not include it.

\begin{prop}\label{prop:valLB}
Let $p\geq 5$ and  $a\in {\mathbb Q}$ with $v_p(a) \geq 0$. Then
$ \displaystyle  v_p\left(\frac{G_i(a)}{i!}\right) \geq
-i\left(\frac{1}{p}+\frac{1}{p-1}\right)$.
For $i<p$, $ \displaystyle  v_p\left(\frac{G_i(a)}{i!}\right)=0$.
We may extend the domain of $\G_p(a+x)$ by setting
$ \displaystyle {\G_p(a+x)}= {\G_p(a)} \cdot
\sum^{\infty}_{k=0} \frac{ G_k(a) }{k!} x^k$
for $x\in {\mathbb C}_p$ with
$ \displaystyle  v_p(x)\geq \left(\frac{1}{p}+\frac{1}{p-1}\right)$.
In particular,
\begin{equation}
\label{eq:recursion}
\displaystyle\frac{\G_p(a+1+x)}{\G_p(a+x)}=\left\{\begin{array}{cc} -(a+x) & |a+x|_p=1
\\ -1& |a+x|_p<1\end{array}\right.  ,
\end{equation} and
\begin{equation}
\label{eq:functional}
\G_p(a+x)\G_p(1-a-x)=(-1)^{a_0(a)}.
\end{equation}
\end{prop}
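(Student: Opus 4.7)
The plan is to leverage the already-established case $a=0$ from Corollary~\ref{vpGi} and propagate the estimates, first across the disk $p\Zp$ via Theorem~\ref{RZ}, then to arbitrary $a \in \BQ \cap \Zp$ by finitely many integer translations.

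For the extension to any $a \in p\Zp$, I would re-expand the Robert--Zuber series (Theorem~\ref{RZ}) around $y=a$: substituting $y = a+x$ into $\log_p \G_p(y) = \lambda_0 y - \sum_{m\ge 1} \lambda_m/(2m(2m+1)) \cdot y^{2m+1}$ and expanding each $(a+x)^{2m+1}$ produces a Taylor series in $x$ whose $j$th coefficient is a convergent $\Zp$-combination of terms $\lambda_m a^{2m+1-j}/(2m(2m+1))$. Since $v_p(a) \ge 1$, this is controlled by exactly the same valuation inputs (Clausen--von Staudt applied to $\lambda_m \equiv B_{2m}$) that drove the $a=0$ argument of Lemma~\ref{valGi}. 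I then invoke Lemma~\ref{derivs} to write the $k$th Taylor coefficient $k!\cdot c_k$ as $G_k(a) + P_k(G_1(a),\ldots,G_{k-1}(a))$ for a polynomial $P_k$ with integer coefficients, and invert by induction on $k$ to recover $v_p(G_k(a)/k!) \ge -k(1/p+1/(p-1))$, with the sharper $\ge 0$ for $k < p$ (the first denominator divisible by $p$ appears in $\lambda_{(p-1)/2}$). Exponentiating then produces the Taylor series for $\G_p(a+x)/\G_p(a)$.

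Next, I would propagate to arbitrary $a \in \BQ \cap \Zp$ by integer translation. Choose an integer $N \in \{0,1,\ldots,p-1\}$ with $a-N \in p\Zp$ and set $b := a-N$. Step~1 handles $b$; I transfer to $b+1,\ldots,b+N=a$ by induction. For the inductive step, apply Proposition/Definition~\ref{pGamma}(2) with argument $b+j+x$: for $x$ in the convergence disk the $p$-residue of $b+j+x$ equals that of $b+j$, so $\G_p(b+j+1+x) = -(b+j+x)\G_p(b+j+x)$ when $|b+j|_p=1$ and $\G_p(b+j+1+x) = -\G_p(b+j+x)$ otherwise. Matching Taylor coefficients of both sides in $x$ yields, in the unit case,
\begin{equation*}
\frac{G_k(b+j+1)}{k!} = \frac{G_k(b+j)}{k!} + \frac{1}{b+j} \cdot \frac{G_{k-1}(b+j)}{(k-1)!},
\end{equation*}
and simply $G_k(b+j+1) = G_k(b+j)$ in the other case. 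Since $v_p(1/(b+j)) = 0$ in the unit case, both recursions preserve the valuation bounds.

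Finally, the identities \eqref{eq:recursion} and \eqref{eq:functional} extend from the integer-argument case to the full convergence disk via the $p$-adic identity principle: both sides are analytic in $x$ on the disk, they agree on the subdisk $x \in p\Zp$ (where $a+x \in \Zp$, $a_0(a+x) = a_0(a)$, and the case distinction $|a+x|_p=1$ vs.\ $<1$ is determined by $a$ alone, so Proposition/Definition~\ref{pGamma}(2) and (3) apply directly), and $p\Zp$ has $0$ as an accumulation point inside the disk. The main obstacle will be the careful bookkeeping in Step~1 — verifying that the re-expansion around $a \in p\Zp$ truly preserves coefficient valuations and that the inductive inversion of Lemma~\ref{derivs} really does reproduce the $\ge -k(1/p+1/(p-1))$ bound; the rest is routine.
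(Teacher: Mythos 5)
The paper does not actually prove this proposition --- it is dismissed as ``a routine exercise in expansions of nonarchimedean series about different points'' --- and your two-stage plan (re-expand about a point of $p\Z_p$, then translate by integers via the functional equation, then extend \eqref{eq:recursion} and \eqref{eq:functional} to the full disk by the identity principle for convergent power series) is a correct execution of exactly that route; the coefficient recursion you derive for the translation step and the accumulation-point argument are both sound. Two remarks. First, your Step~1 can be streamlined so that the ``main obstacle'' you worry about disappears: instead of re-expanding $\log_p\G_p$ and inverting Lemma~\ref{derivs}, re-expand the Taylor series of $\G_p$ itself about $a\in p\Z_p$, giving $\frac{G_j(a)}{j!}=\sum_{k\ge j}\binom{k}{j}\frac{G_k(0)}{k!}a^{k-j}$; since $\binom{k}{j}\in\Z$, $v_p(a^{k-j})\ge k-j$, and $k-j\ge (k-j)\left(\frac1p+\frac1{p-1}\right)$, the bound of Corollary~\ref{vpGi} propagates term by term with no further appeal to Clausen--von Staudt. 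Second, your argument yields only $v_p(G_i(a)/i!)\ge 0$ for $i<p$, not the equality asserted in the statement --- but the asserted equality is in fact false as written (take $a=0$, $i=1$, $p=5$: by the Thakur remark in the proof of Lemma~\ref{valGi}, $v_p(G_1(0))=1$ at the Wilson primes $5,13,563$), and only the inequality $\ge 0$ is ever used in Theorem~\ref{truncate}, so your weaker conclusion is the correct one.
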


Theorem \ref{truncate} provides a $p$-adic approximation to $\G_p$-quotients.

\begin{theorem}\label{truncate}
For $p \geq 5$, $r \in {\mathbb N}$,
{$a\in \Z_p ,m\in{\mathbb C_p}$ satisfying $ \displaystyle  v_p(m)\geq 0$}
and $t\in \{0,1,2\}$
we have
$$\frac{ \G_p(a+mp^r)}{\G_p(a)}\equiv \sum^t_{k=0} \frac{ G_k(a)}{k!} (mp^r)^k \mod p^{(t+1)r}.$$
The above result also holds for $t=4$ if $p \geq 11$.
\end{theorem}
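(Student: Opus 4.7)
My plan is to Taylor-expand $\G_p(a+mp^r)/\G_p(a)$ around $a$ using Proposition~\ref{prop:valLB} and then bound the tail. For $p\geq 5$ and $r\geq 1$ one has $v_p(mp^r)\geq r\geq 1>\tfrac{1}{p}+\tfrac{1}{p-1}$, so the series
$$\frac{\G_p(a+mp^r)}{\G_p(a)} \;=\; \sum_{k=0}^{\infty}\frac{G_k(a)}{k!}(mp^r)^k$$
converges in $\C_p$. The claim reduces to showing each term with $k\geq t+1$ has $p$-adic valuation at least $(t+1)r$.

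The easy range is $t+1 \leq k \leq p-1$: Proposition~\ref{prop:valLB} gives $v_p(G_k(a)/k!)=0$, so together with $v_p(m)\geq 0$ the $k$-th term has valuation $\geq kr \geq (t+1)r$.

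The substantive case is $k\geq p$. I would invoke the general bound $v_p(G_k(a)/k!)\geq -k\bigl(\tfrac{1}{p}+\tfrac{1}{p-1}\bigr)$ from Proposition~\ref{prop:valLB}, sharpened by the fact that $G_k(a)/k!\in\Q_p$ has integer $p$-adic valuation for $a\in\Z_p$. Hence $v_p(G_k(a)/k!) \geq \bigl\lceil -k(\tfrac{1}{p}+\tfrac{1}{p-1})\bigr\rceil$, and
$$v_p\!\left(\frac{G_k(a)}{k!}(mp^r)^k\right) \;\geq\; \bigl\lceil -k(\tfrac{1}{p}+\tfrac{1}{p-1})\bigr\rceil + kr.$$
The tightest case is $k=p$: the ceiling equals $-2$ for every $p\geq 5$ (since $1+\tfrac{p}{p-1}\in(2,\tfrac{9}{4}]$), so the $k=p$ term has valuation $\geq pr-2$, and the required $pr-2\geq (t+1)r$ becomes the elementary inequality $(p-t-1)r\geq 2$. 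This holds for $t\in\{0,1,2\}$, $p\geq 5$, $r\geq 1$, and for $t=4$ under the sharper hypothesis $p\geq 11$. For $k>p$ the bound improves linearly in $k$, so no further cases arise.

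The main obstacle is precisely this borderline $k=p$ case: the raw non-integer bound $-k(\tfrac{1}{p}+\tfrac{1}{p-1})$ is barely insufficient, and it is the integrality of the $p$-adic valuation on $\Q_p$ that makes up the missing fraction. Once this is recognized, the proof becomes a short elementary verification of the inequality $(p-t-1)r\geq 2$ under the stated hypotheses.
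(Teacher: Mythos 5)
Your proof is correct and follows essentially the same route as the paper: Taylor-expand using Proposition~\ref{prop:valLB} and bound the tail termwise, using $v_p(G_k(a)/k!)=0$ for $k<p$ and $v_p(G_k(a)/k!)\geq -k\left(\frac{1}{p}+\frac{1}{p-1}\right)$ for larger $k$. Your uniform treatment of the borderline case $k=p$ via integrality of the $p$-adic valuation is exactly the device the paper invokes ad hoc for $k=p=5$, $t=2$, and your split at $k=p$ (rather than the paper's split at $k=5$) is, if anything, a slightly cleaner bookkeeping of the same argument.
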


\begin{proof}
When $t=1$  we need only show that all terms past the first degree term
in the Taylor series have valuation at least $2r$. Since $p \geq 5$,
Proposition~\ref{prop:valLB} implies  $\displaystyle \frac{G_k(a)}{k!} \in \Z_p$
for $k=2,3,4$
so $\displaystyle v_p\left(\frac{G_k(a)}{k!}(mp^r)^k\right) =kr \geq 2r$ so
there is no problem with the $2$nd, $3$rd and $4$th terms.
For $k \geq 5$ we have
$\displaystyle v_p\left(\frac{G_k(a)}{k!} (mp^r)^k\right)
\geq kr-k\left(\frac{1}{p} +\frac{1}{p-1}\right)$. We
need to check  this is
at least $2r$.
A simple computation reduces this to the inequality
$$r \geq \left(\frac{k}{k-2}\right)\left(\frac{1}{p} +\frac{1}{p-1}\right).$$
For $k,p \geq 5$ the right side is less than $1$
so the inequality holds for all $r\in {\mathbb N}$.

When $t=2$,  the $3$rd and $4$th  terms are  handled as above.
For $k \geq 5$
 the relevant inequality
is $$r \geq \left(\frac{k}{k-3}\right)\left(\frac{1}{p} +\frac{1}{p-1}\right).$$
The right side is less than $1$ when $k \geq 5$ and $p \geq 7$.
It is also less than $1$ when $k \geq 6$ and $p=5$. It remains to check the $k=p=5$ case,
namely that
$\displaystyle v_p \left(\frac{G_5(a)}{5!} (m5^r)^5\right)
\geq 3r$. Proposition~\ref{prop:valLB} gives this valuation is $5r-2$, which is at least
$3r$ for all natural numbers $r$.

When $t=4,\,p \geq 11$, $\displaystyle v_p\left(  \frac{G_k(a)}{k!} (mp^r)^k\right)
\geq kr$ for $5 \leq k \leq 10$  by arguments similar to those above. Again, arguing as above for $k \geq 11$,
we need  $$r \geq \left(\frac{k}{k-5}\right)\left(\frac{1}{p} +\frac{1}{p-1}\right).$$ As the right side is less than $1$
when $k,p \geq 11 $, we are done.

We leave the $t=0$ case as
an exercise.

\end{proof}

We will need some formulae for various $\G$ and $\G_p$-quotients.
\begin{defn}\label{def:a'} Let $a \in \Q$ with $v_p(a)=0$. Let
$i$ be the unique integer in $\{1,2,\dots,p-1\}$
satisfying $a+i \equiv 0 \mod p$ and define $a' \in \Q$
by $a+i=pa'$.
\end{defn}

\begin{lemma}\label{aprime}
For $\displaystyle
a \in \left\{\frac1n,\frac2n,\cdots,\frac{n-1}n\right\}$ and $p \equiv 1 \mod  n$, $a'=a$.
\end{lemma}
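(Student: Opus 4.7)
The plan is to write down the correct $i$ explicitly and then verify the two defining conditions of $a'$. Given $a=k/n$ with $1\le k\le n-1$ and $p\equiv 1\pmod n$, set $m=(p-1)/n\in\Z_{\ge 1}$ and try $i=km$. Since $1\le k\le n-1$, one has $m\le km\le (n-1)m=p-1-m\le p-1$, so $i$ lies in the required range $\{1,2,\dots,p-1\}$.

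Next I would check the congruence condition $a+i\equiv 0\pmod p$ in $\Z_p$. Direct computation gives
\[
a+i=\frac{k}{n}+\frac{k(p-1)}{n}=\frac{kp}{n}=p\cdot\frac{k}{n}.
\]
Because $p\equiv 1\pmod n$ forces $\gcd(p,n)=1$, the factor $k/n$ lies in $\Z_p$, so $a+i$ has positive $p$-adic valuation, confirming the congruence. The uniqueness of $i$ in $\{1,\ldots,p-1\}$ (which is immediate from Definition~\ref{def:a'}) then identifies this $i$ as the one in the definition.

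Finally, the defining equation $a+i=pa'$ together with the computation above yields $pa'=p\cdot(k/n)$, hence $a'=k/n=a$. There is essentially no obstacle here: the only ingredient beyond arithmetic is the divisibility $n\mid p-1$, which is exactly the hypothesis $p\equiv 1\pmod n$, and this is precisely what makes the natural guess $i=k(p-1)/n$ an integer.
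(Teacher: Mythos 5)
Your proof is correct and is essentially the paper's own argument: the paper's entire proof is the one-line computation $\frac{k}{n}+k\cdot\frac{p-1}{n}=\frac{kp}{n}$, i.e. the same choice $i=k(p-1)/n$. You simply spell out the range check and the uniqueness of $i$, which the paper leaves implicit.
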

\begin{proof}
$\displaystyle \frac{k}n+ k\cdot\frac{p-1}n = \frac{kp}n$.
\end{proof}

Lemma \ref{GammaP} below is the technical ingredient that allows us to replace
$\G$-quotients with $\G_p$-quotients.

\begin{lemma}\label{GammaP}
Let $a \in (0,1] \cap {\mathbb Q}$.
\begin{itemize}
\item[1)] If $v_p(a) =0$
then $\forall m,r \in {\mathbb N}$,
$$\displaystyle \frac{ \Gamma(a+mp^r)}{  \Gamma(a+mp^{r-1})} =
(-1)^m p^{mp^{r-1}} \frac{ \Gamma_p(a+mp^r)}{  \Gamma_p(a)}
\frac{(a')_{mp^{r-1}}} {(a)_{mp^{r-1}}}.$$
\item[2)] Suppose $a+mp^r \in \N \,\,\forall r \in \N$.
(Here, $a,m\in \Q$ but need {\bf not} be in $\Z$.)
Then
$$\displaystyle \frac{ \Gamma(a+mp^r)}{  \Gamma(a+mp^{r-1})} =
(-1)^{a+mp^r} p^{a+mp^{r-1}-1} \Gamma_p(a+mp^r).$$
\item[3)] Let $a,b \in {\mathbb Q}$ and suppose $a-b \in \Z$ and $a,b \notin \Z_{\leq 0}$.
If none of the numbers
between $a$ and $b$ that differ from both by an integer are divisible by $p$
then
$\displaystyle \frac{ \G(a)}{\G(b)} = (-1)^{a-b}
\frac{\G_p(a)}{\G_p(b)}$. Equivalently, {    if $a\in \Z_p, n\in \mathbb N$ such that none of $a,a+1\cdots, a+n-1$ is $p\Z_p$, then $$(a)_n=(-1)^n \frac{\G_p(a+n)}{\G_p(a)}.$$}
\newline\noindent
Variant: If there are numbers $x$  between $a$ and $b$
with $x-a,x-b \in \Z$ and $v_p(x)>0$, then the right side must include these multiples.
\end{itemize}
\end{lemma}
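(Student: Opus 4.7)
The plan is to prove all three parts via a single underlying observation: for a positive integer $n$, $\G(n) = (n-1)!$ equals the product of the integers in $[1,n-1]$, which splits as the product of those coprime to $p$ (which is essentially $\G_p(n)$ up to sign, as encoded in Proposition/Definition~\ref{pGamma}) times the product of the multiples of $p$ (which is itself $p^M M!$ for appropriate $M$). The recursions in Proposition/Definition~\ref{pGamma} generalize this factorization to arbitrary $p$-adic arguments, and each of parts 1), 2), 3) is simply a bookkeeping exercise applying it in the right form.

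I would begin with part 3), which is the cleanest. By symmetry assume $a \geq b$. Iterating $\G(x+1) = x\G(x)$ gives $\G(a)/\G(b) = b(b+1)\cdots(a-1)$, while iterating item 2) of Proposition/Definition~\ref{pGamma} gives $\G_p(a)/\G_p(b) = (-1)^{a-b} b(b+1)\cdots(a-1)$ under the hypothesis that no intermediate value lies in $p\Z_p$. Dividing yields the claim, and the reformulation in terms of $(a)_n$ is immediate from $(a)_n = \G(a+n)/\G(a)$. For the variant, each intermediate $x$ with $v_p(x) > 0$ contributes $x$ to the $\G$ product but only $-1$ to the $\G_p$ product, so one multiplies in the missing factors $\prod x$ (or their reciprocal, depending on whether $a>b$ or $a<b$).

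For part 1), I write $\G(a+mp^r)/\G(a+mp^{r-1}) = (a)_{mp^r}/(a)_{mp^{r-1}}$ and split $(a)_{mp^r} = \prod_{j=0}^{mp^r-1}(a+j)$ according to whether $v_p(a+j)$ is zero or positive. Since $v_p(a)=0$, the indices with $v_p(a+j)>0$ are exactly those with $j \equiv i \pmod p$ where $i \in \{1,\dots,p-1\}$ is the unique residue determined by $a+i = pa'$, and there are precisely $mp^{r-1}$ such indices in $[0, mp^r-1]$. Writing each such $j = i + pk$ with $k \in [0, mp^{r-1}-1]$ gives $a+j = p(a'+k)$, so these factors contribute $p^{mp^{r-1}}(a')_{mp^{r-1}}$. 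Iterated application of item 2) of Proposition/Definition~\ref{pGamma} gives the remaining factors as $(-1)^{mp^r}\G_p(a+mp^r)/\G_p(a)$. Since $p$ is odd, $(-1)^{mp^r} = (-1)^m$, yielding the formula of part 1) after dividing by $(a)_{mp^{r-1}}$.

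For part 2), I would apply the factorization $\G(n) = (-1)^n p^{M(n)} M(n)!\,\G_p(n)$, where $M(n) = \lfloor(n-1)/p\rfloor$, to $n = a+mp^r$. A short case split on whether $a = 1$ or $a \in (0,1)$, using the hypothesis that $a+mp^s \in \N$ for all $s$ (which forces $mp^{r-1}$ to have the same fractional part as $1-a$), establishes the floor identity $M(a+mp^r) = a+mp^{r-1}-1$. Then $M(a+mp^r)! = (a+mp^{r-1}-1)! = \G(a+mp^{r-1})$, and dividing through yields the stated formula. The main obstacle, such as it is, lies in this floor computation and the corresponding uniqueness of $i$ in part 1); both are routine consequences of the divisibility hypotheses but are the natural spots where a sign or a power of $p$ can be mislaid.
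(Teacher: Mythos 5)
Your proposal is correct and follows essentially the same route as the paper: in each part you factor the relevant rising factorial (or factorial) into its prime-to-$p$ part, which the recursion $\G_p(x+1)/\G_p(x)=-x$ or $-1$ identifies as $(-1)^{\ast}$ times a $\G_p$-quotient, and its multiples of $p$, which contribute the power of $p$ together with $(a')_{mp^{r-1}}$ in 1) and $(a+mp^{r-1}-1)!$ in 2). The paper's proof of 1) routes the same computation through the intermediate quotient $\G(a')/\G(a')$, but the bookkeeping — including the count of $mp^{r-1}$ multiples of $p$ and the sign $(-1)^{mp^r}=(-1)^m$ — is identical.
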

\begin{proof}

1)  As $a'+mp^{r-1}>0$, $\G(a'+mp^{r-1}) \neq 0$ so
$$ \frac{ \Gamma(a+mp^r)}{  \Gamma(a+mp^{r-1})} =
\frac{ \Gamma(a+mp^r)}{  \Gamma(a'+mp^{r-1})}\frac{ \Gamma(a'+mp^{r-1})}{  \Gamma(a+mp^{r-1})}$$
which, by the fundamental property of the $\G$-function equals
$$
\frac{\Gamma(a) \cdot a\cdot (a+1) \cdots (a+mp^r-1) }
{\Gamma(a') \cdot a' \cdot (a'+1) \cdots (a'+mp^{r-1}-1)}
\cdot \frac{ \Gamma(a'+mp^{r-1})}{  \Gamma(a+mp^{r-1})}.$$
Now $a+i$ is the first term in the numerator that is a multiple of $p$
and this is $pa'$. Similarly $a+i+p =p(a'+1),\dots, a+i+mp^r-p = p(a'+mp^{r-1}-1)$.
The multiples of $p$ in the numerator cancel exactly
with the terms in the denominator
leaving
$$\displaystyle \frac{\G(a)} {\G(a')} \left(p^{mp^{r-1}} \tilde{\prod}^{mp^r-1}_{j=0} (a+j) \right)
\frac{ \Gamma(a'+mp^{r-1})}{  \Gamma(a+mp^{r-1})}
=  \frac{\G(a)} {\G(a')} \left(
p^{mp^{r-1}}
\frac{\G_p(a)}{\G_p(a)}
\tilde{\prod}^{mp^r-1}_{j=0} (a+j) \right)
\frac{ \Gamma(a'+mp^{r-1})}{  \Gamma(a+mp^{r-1})}$$
where the products are over terms prime to $p$.
By the definition of $\G_p$  this is
$$\displaystyle  \frac{\G(a)} {\G(a')} p^{mp^{r-1}} (-1)^{mp^r}
\frac{\Gamma_p(a+mp^r)} {\Gamma_p(a)} \frac{\G(a'+mp^{r-1})}{\G(a+mp^{r-1})}=
 (-1)^mp^{mp^{r-1}}
\frac{\Gamma_p(a+mp^r)} {\Gamma_p(a)} \frac{ (a')_{mp^{r-1}}}{(a)_{mp^{r-1}}}   ,$$
where the last equality follows from using the definition of rising
factorials twice.

2) This is like part 1), except here
$$ \frac{ \Gamma(a+mp^r)}{  \Gamma(a+mp^{r-1})} =
\frac{ (a+mp^r-1)!}{(a+mp^{r-1}-1 )!}.$$
It is easy to see, using that $a \in (0,1] \cap \Q$, that each  multiple of $p$ in the numerator is $p$ times
an element of the denominator. Canceling these, introducing $1$ in the form
$\displaystyle \frac{\G_p(1)} {\G_p(1)} $ and using
the basic properties of $\G_p$, including that $\G_p(1)=-1$,  gives the result.

3) This follows immediately from the definitions of $\G$ and $\G_p$.
\end{proof}

\subsection{ An immediate application}
Lemma $2.1$ of  \cite{CKKO} asserts that for each prime $p\equiv 1\mod 4$
\begin{equation}
\left (\frac34 \right )_p\equiv 3\left (\frac14 \right )_p \mod p^3.
\end{equation}
We generalize this as follows.
{   \begin{lemma} \label{generalizeCKKO} Let $a\in (0,1) \cap \Q$ with $v_p(a)=0$. For $p \geq 5$
\begin{equation}
\frac{(a) _{p^r}}{(a')_{p^{r-1}}} \equiv \frac{(1-a)_{p^r}}{(1-a')_{p^{r-1}}}  \mod p^{p^{r-1}+2r}.
\end{equation}
\end{lemma}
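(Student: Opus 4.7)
The plan is to convert both sides into $\G_p$-quotients using Lemma~\ref{GammaP} part 1, factor out the common power of $p$, and then exploit the reflection symmetry $G_1(a)=G_1(1-a)$ at the linear Taylor level.

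First I would observe that, since $(a)_{p^r}/(a)_{p^{r-1}}=\G(a+p^r)/\G(a+p^{r-1})$, applying Lemma~\ref{GammaP} part 1 with $m=1$ yields
\[
\frac{(a)_{p^r}}{(a')_{p^{r-1}}}
=(a)_{p^{r-1}}\cdot\frac{\G(a+p^r)}{\G(a+p^{r-1})}\cdot\frac{1}{(a')_{p^{r-1}}}
=-p^{p^{r-1}}\,\frac{\G_p(a+p^r)}{\G_p(a)}.
\]
Next, I would check the elementary identity $(1-a)'=1-a'$: writing $a\equiv k\pmod p$ with $k\in\{2,\dots,p-1\}$ (the assumption $v_p(a)=0$ together with the implicit $v_p(1-a)=0$ rules out $k=1$), we have $a'=(a+p-k)/p$, and $(1-a)+(k-1)\equiv 0\pmod p$ gives $(1-a)'=(k-a)/p=1-a'$. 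Hence the same computation for $1-a$ gives
\[
\frac{(1-a)_{p^r}}{(1-a')_{p^{r-1}}}=-p^{p^{r-1}}\,\frac{\G_p(1-a+p^r)}{\G_p(1-a)}.
\]

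With the common factor $-p^{p^{r-1}}$ pulled out, it suffices to show
\[
\frac{\G_p(a+p^r)}{\G_p(a)}\equiv\frac{\G_p(1-a+p^r)}{\G_p(1-a)}\pmod{p^{2r}}.
\]
For this I would apply Theorem~\ref{truncate} with $t=1$ and $m=1$ to both quotients, obtaining
\[
\frac{\G_p(a+p^r)}{\G_p(a)}\equiv 1+G_1(a)\,p^r,\qquad
\frac{\G_p(1-a+p^r)}{\G_p(1-a)}\equiv 1+G_1(1-a)\,p^r\pmod{p^{2r}}.
\]
The conclusion then follows immediately from Corollary~\ref{reflect}, which asserts $G_1(a)=G_1(1-a)$.

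The argument is mostly bookkeeping: the conceptual content is that the reflection formula $\G_p(x)\G_p(1-x)=(-1)^{a_0(x)}$ forces the first-order Taylor coefficients of $\log\G_p$ at $a$ and $1-a$ to agree, and this is precisely the cancellation that powers the congruence. The only place one has to be careful is verifying $(1-a)'=1-a'$ and checking that the single factor $-p^{p^{r-1}}$ is shared between the two sides so that a mod $p^{2r}$ bound on the $\G_p$-quotients lifts to a mod $p^{p^{r-1}+2r}$ bound on the original expression.
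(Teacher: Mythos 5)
Your proof is correct and follows essentially the same route as the paper's: both sides are rewritten as $(-1)^{p^r}p^{p^{r-1}}\,\G_p(\cdot+p^r)/\G_p(\cdot)$, Taylor-expanded via Theorem~\ref{truncate}, and the congruence falls out of $G_1(a)=G_1(1-a)$ (the paper does the power-of-$p$ bookkeeping directly rather than citing Lemma~\ref{GammaP}~1), but it is the same computation). The only caveat is your parenthetical appeal to an ``implicit'' hypothesis $v_p(1-a)=0$, which is not actually assumed; in the exceptional case $a\equiv 1\bmod p$ one should instead verify directly that $(1-a)_{p^r}/(1-a')_{p^{r-1}}=(-1)^{p^r}p^{p^{r-1}}\G_p(1-a+p^r)/\G_p(1-a)$ still holds (it does, since $1-a'=(1-a)/p$ there), after which your argument goes through unchanged.
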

\begin{proof}
By definition $(a)_{p^r} =a(a+1)\cdots(a+p^r-1)$. The set $\{a+i\}_{i=0}^{p^r-1}$  contains $p^{r-1}$ multiples of $p$, which are
$a'p, (a'+1)p, \cdots, (a'+p^{r-1}-1)p$ respectively, combining with $ \displaystyle (a)_{p^r} =\frac{\G(a+p^r)}{\G(a)}$ we have
$ \frac{(a) _{p^r}}{(a')_{p^{r-1}}} =p^{p^{r-1}}(-1)^{p^r}\frac{\G_p(a+p^r)}{\G_p(a)}$. By Theorem \ref{truncate}, we have
$$ \frac{(a) _{p^r}}{(a')_{p^{r-1}}} =p^{p^{r-1}}(-1)^{p^r}(1+G_1(a)p^r+G_2(a)p^{2r}) \mod p^{p^{r-1}+2r}.$$
Similarly, $$ \frac{(1-a) _{p^r}}{(1-a')_{p^{r-1}}} =p^{p^{r-1}}(-1)^{p^r}(1+G_1(1-a)p^r+G_2(1-a)p^{2r}) \mod p^{p^{r-1}+2r}.$$ The fact that
$G_1(a)=G_1(1-a)$  gives the result.
\end{proof}}
\begin{cor}
Fix $n \in {\mathbb N}$. For  any prime $p \equiv 1 \mod n$,
\begin{equation}
\left(1-\frac1n \right)_p\equiv (n-1)\left (\frac1n \right )_p \mod p^3.
\end{equation}
\end{cor}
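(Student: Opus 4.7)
The plan is to apply Lemma~\ref{generalizeCKKO} with $a=1/n$ and $r=1$, together with Lemma~\ref{aprime}, and then do some routine rearrangement.

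Since we assume $p \equiv 1 \bmod n$ (so in particular $p\nmid n$ when $n\geq 2$), we have $v_p(1/n)=0$, so the hypothesis of Lemma~\ref{generalizeCKKO} is satisfied. Setting $a=1/n$ and $r=1$ in that lemma yields
\begin{equation*}
\frac{(1/n)_p}{(a')_1} \equiv \frac{(1-1/n)_p}{(1-a')_1} \mod p^{1+2} = p^3,
\end{equation*}
where $a'$ is as in Definition~\ref{def:a'}. Since $(a')_1 = a'$ and $(1-a')_1 = 1-a'$, this simplifies to a congruence involving only $a'$, and the numerical factors $a'$, $1-a'$ are $p$-adic units.

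Next, Lemma~\ref{aprime} applies because $1/n \in \{1/n, 2/n,\ldots,(n-1)/n\}$ and $p\equiv 1\bmod n$, so $a' = 1/n$ and consequently $1-a' = (n-1)/n$. Substituting these values gives
\begin{equation*}
n\cdot (1/n)_p \equiv \frac{n}{n-1}\cdot (1-1/n)_p \mod p^3.
\end{equation*}
Multiplying both sides by the $p$-adic unit $(n-1)/n$ yields the desired congruence
\begin{equation*}
(n-1)\left(\tfrac{1}{n}\right)_p \equiv \left(1-\tfrac{1}{n}\right)_p \mod p^3.
\end{equation*}

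There is no real obstacle here since the heavy lifting is done by Lemma~\ref{generalizeCKKO}; the only thing to be careful about is tracking that $n$ and $n-1$ are units mod $p$ (which follows from $p\equiv 1\bmod n$ and $p\geq 5$), so we may clear the denominators on both sides without losing any $p$-adic precision. Taking $n=4$ recovers the original assertion of \cite{CKKO}.
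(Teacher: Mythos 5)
Your proof is correct and is essentially the paper's argument: the paper applies Lemma~\ref{generalizeCKKO} with $a=1-\frac1n$ while you take $a=\frac1n$, but by the symmetry of that lemma these are the same computation. Your extra care in checking that $n$ and $n-1$ are $p$-adic units is a nice touch the paper leaves implicit.
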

\begin{proof} Set $\displaystyle a=1-\frac1n$ in Lemma~\ref{generalizeCKKO} and use
Lemma~\ref{aprime}.
Setting $n=4$ recovers the result of \cite{CKKO}.
\end{proof}

\section{Kazandzidis supercongruences and the proof of Theorem \ref{thm:1}}\label{ss:K-super}
We prove Theorem \ref{thm:1}.
We first use the Kazandzidis supercongruence for binomial
coefficients to $p$-adically approximate $4^{\frac{p^r-p^{r-1}}4}$ mod $p^{2r}$ in Proposition
\ref{prop:G14G12}. The
{\it Pfaff transformation formula} allows us to rewrite our $_2F_1$ as another such expression and
then   {\it Kummer's evaluation
formula} converts this $_2F_1$ to a  $\G$-quotient. Thus the left side of Theorem~\ref{thm:1} becomes
a  $\G$-quotient mutiplied by a power of $4^{\frac{p^r-p^{r-1}}4}$, which we already approximated.
Lemma \ref{GammaP} expresses the $\G$-quotient  as a $\G_p$-quotient and then we use
Theorem~\ref{truncate} and the result for $4^{\frac{p^r-p^{r-1}}4}$ to approximate it.

\subsection{ Approximating $4^{\frac{ p^r-p^{r-1}}4}$.}

In 1968, Kazandzidis proved that for $0\le m\le n$ and prime  $p\ge 5$,
$$\binom{pn}{pm}\equiv \binom{n}{m} \mod p^3.$$  When $n=2$ and $m=1$, it is equivalent to the Wolstenholme's theorem.
 In \cite{RZ95}, Robert and Zuber generalized this  to
\begin{equation}\label{eq:RZ}
\binom{p^rn}{p^rm}\equiv \binom{p^{r-1}n}{p^{r-1}m} \mod p^{3r}.
\end{equation}
In fact, they proved a stronger congruence, but the above is all we need.
The expression
$\binom{p^rn}{p^rm}/ \binom{p^{r-1}n}{p^{r-1}m} $ can be written in terms of $p$-adic Gamma functions.

We need some easy lemmas whose proofs we omit.
\begin{lemma}\label{binomunit} For $p \geq 5$,
$\displaystyle v_p\left(\binom{2mp^r}{mp^r}\right)=0$ for $m=1,2$, $r\ge 0$.
\end{lemma}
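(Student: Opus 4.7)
The plan is to use Kummer's theorem, which states that for a prime $p$ and nonnegative integers $a,b$, the $p$-adic valuation $v_p\binom{a+b}{a}$ equals the number of carries when adding $a$ and $b$ in base $p$. This transforms the computation into an entirely combinatorial one about digit representations.

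For $m=1$, write $\binom{2p^r}{p^r} = \binom{p^r + p^r}{p^r}$. In base $p$, the number $p^r$ has digit expansion $(1,0,\dots,0)$, so adding $p^r + p^r$ yields the digits $(2,0,\dots,0)$. Since $p \geq 5 > 2$, no digit reaches $p$ and there are no carries, so $v_p\binom{2p^r}{p^r} = 0$. For $m=2$, the same idea applies to $\binom{4p^r}{2p^r} = \binom{2p^r + 2p^r}{2p^r}$: both summands have base-$p$ expansion $(2,0,\dots,0)$, their sum has digits $(4,0,\dots,0)$, and since $p \geq 5 > 4$ again no carries occur.

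An equivalent route (which I would present as a one-line alternative) is to use Legendre's formula $v_p(n!) = (n - s_p(n))/(p-1)$, where $s_p(n)$ is the digit sum. For $m\in\{1,2\}$ and $p\geq 5$ we have $s_p(2mp^r)=2m$ and $s_p(mp^r)=m$, and a direct cancellation in $v_p\bigl((2mp^r)!\bigr) - 2 v_p\bigl((mp^r)!\bigr)$ gives $0$. Either method works with minimal computation; there is no real obstacle since the only thing being used is the hypothesis $p\geq 5$, which ensures the relevant base-$p$ digits stay below $p$.
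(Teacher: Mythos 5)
Your proof is correct. The paper explicitly omits the proof of this lemma (``We need some easy lemmas whose proofs we omit''), so there is no argument in the text to compare against; either of your two routes --- Kummer's carries criterion or the Legendre digit-sum formula $v_p(n!)=\bigl(n-s_p(n)\bigr)/(p-1)$ --- is a complete and standard justification, with the hypothesis $p\ge 5$ entering exactly where you say it does, namely to ensure the digit $2m\le 4$ stays below $p$ so no carry occurs.
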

\begin{lemma} \label{specialtruncate}
Let $p\geq 5$ and $x \in p^r\Z_p$. Then $\log_p(1+x) \equiv x \mod p^{2r}$,
$\displaystyle\log_p(1+x) \equiv x -\frac{x^2}2 \mod p^{3r}$,
$e^x \equiv 1+x \mod p^{2r}$ and $\displaystyle e^x \equiv 1+x +\frac{x^2}{2!}\mod p^{3r}$.
\end{lemma}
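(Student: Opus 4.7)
The proof will be a direct term-by-term estimation of the defining power series
\[
\log_p(1+x)=\sum_{n\ge 1}\frac{(-1)^{n+1}x^n}{n},\qquad e^x=\sum_{n\ge 0}\frac{x^n}{n!},
\]
using only two ingredients: the hypothesis $v_p(x)\ge r$ (so $v_p(x^n)\ge nr$), and the elementary bounds $v_p(n)\le \log_p n$ and $v_p(1/n!)\ge -n/(p-1)$ (the latter being Lemma~\ref{factorial}). No further machinery is needed; the work is entirely in checking the inequalities.

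For the two logarithm congruences, the $n$-th tail term has valuation $v_p(x^n/n)\ge nr-v_p(n)$, and one must verify $nr-v_p(n)\ge(t+1)r$ for $n\ge t+2$, where $t+1\in\{2r,3r\}$. This is equivalent to $v_p(n)\le (n-t-1)r$, which the plan is to verify by splitting into the easy range $n<p$ (where $v_p(n)=0$) and the range $n\ge p$ (where $v_p(n)\le \log_p n$, and the inequality follows from $p\ge 5$ and $r\ge 1$; the case $r=0$ is trivial since everything is a congruence mod $1$). For the exponential congruences, the $n$-th tail term has valuation $v_p(x^n/n!)\ge nr-n/(p-1)$, and one needs $n\bigl(r-\tfrac{1}{p-1}\bigr)\ge (t+1)r$ for $n\ge t+2$, where again $t+1\in\{2r,3r\}$. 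Since $p\ge 5$ gives $\tfrac{1}{p-1}\le\tfrac14$, the inequality becomes $n(4r-1)\ge 4(t+1)r$, which is straightforward to check for $r\ge 1$ and the relevant $n$.

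The slightly delicate points are the boundary cases: the ``first omitted'' terms $n=t+2$ when $p=5$ and $r=1$, which must be verified by hand rather than by the asymptotic estimate. For the logarithm these reduce to checking that $v_5(3)=0$ and $v_5(4)=0$ suffice; for the exponential they reduce to checking the valuations of $3!$ and $4!$ at $p=5$. None of these present any real difficulty.

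The main ``obstacle'', such as it is, is purely bookkeeping: making sure each of the four congruences is treated with its own correct exponent $(t+1)r\in\{2r,3r\}$ and its own cutoff on $n$, and confirming that the coarse bound $v_p(1/n!)\ge -n/(p-1)$ is strong enough once $p\ge 5$ is used. Since the lemma is a technical input to be invoked later (notably in Proposition~\ref{prop:G14G12}), the proof can be presented compactly by treating the logarithm and exponential in parallel, remarking that the required tail inequality in each case reduces to a statement trivially true for $p\ge 5$ and $r\ge 1$, and leaving the small-case verification to the reader.
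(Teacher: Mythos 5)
The paper gives no proof of Lemma~\ref{specialtruncate} (it is one of the ``easy lemmas whose proofs we omit''), so there is nothing to compare against except the obvious intended argument, which is exactly what you give: bound the valuation of each omitted term of the defining series using $v_p(x^n)\ge nr$ together with $v_p(n)\le \log_p n$ and $v_p(1/n!)\ge -n/(p-1)$, and check the first few terms by hand since $p\ge 5$ kills the small denominators. Your estimates are correct; the only blemishes are notational: you write ``$t+1\in\{2r,3r\}$'' where you mean $(t+1)r\in\{2r,3r\}$, and your tail range ``$n\ge t+2$'' skips the first omitted term $n=2$ of the mod $p^{2r}$ congruences, which still needs (and trivially passes) the check $v_p(2)=v_p(2!)=0$ for $p\ge 5$. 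These are off-by-one slips in the bookkeeping, not gaps in the argument.
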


We also need the Legendre duplication formula,
\begin{equation} \label{eq:Legendre} \G(2z) = \frac{2^{2z-1}\G(z)\G(z+1/2) }{\sqrt{\pi}}.
\end{equation}
By the fundamental property of the Gamma function
$$\binom{2p^r}{p^r}/\binom{2p^{r-1}}{p^{r-1}}=
\frac{ \G(1+2p^r)}{ (\G(1+p^r))^2} \frac{ (\G(1+p^{r-1}))^2}{\G(1+2p^{r-1})}.$$
Taking
$2z=1+2p^r$ and $2z=1+2p^{r-1}$ in (\ref{eq:Legendre})
\begin{multline*}\binom{2p^r}{p^r}/\binom{2p^{r-1}}{p^{r-1}}=
\frac{2^{2p^r}\G(\frac{1}{2}+p^r)\G(1+p^r) }{ \sqrt{\pi} (\G(1+p^r))^2}
\frac{ \sqrt{\pi} (\G(1+p^{r-1}))^2}{ 2^{2p^{r-1}} \G(\frac{1}{2}+p^{r-1}) \G(1+p^{r-1})}\\
= 2^{2(p^r-p^{r-1})} \frac{\G( \frac{1}{2}+p^r) } {\G( \frac{1}{2}+p^{r-1}) }
\frac{\G(1+p^{r-1})}{\G(1+p^r)} =  4^{(p^r-p^{r-1})}
\frac{ \G_p(\frac{1}{2} +p^r)}{\G_p(\frac{1}{2}) } \frac{\G_p(1)}{\G_p(1+p^r)}
,\end{multline*} where the last equality follows from two applications of 1) of Lemma~\ref{GammaP}. The various powers of $(-1)$
cancel out.

Theorem~\ref{truncate} implies that taking Taylor expansions of these $\G_p$'s and truncating mod $p^{2r}$ gives congruences
mod $p^{2r}$. Then taking $\log_p$ and truncating mod $p^{2r}$ perpetuates
these congruences by Lemma~\ref{specialtruncate}.
Using~\eqref{eq:RZ}  and invoking Lemma~\ref{binomunit} to divide yields
\begin{multline}\label{eq:Gvalues}
p^{r-1}\log_p 4^{p-1}+\log_p\left[1+G_1\left(\frac12\right)p^r+G_2\left(\frac12\right)
\frac{(p^{r})^2}2+\cdots\right]-\\
\log_p\left[1+G_1(0)p^r+G_2(0)\frac{(p^r)^2}2+\cdots  \right]  \equiv 0\mod p^{3r}.
\end{multline}
We see
$$p^r \frac{1}{p} \log_p 4^{p-1} +G_1\left(\frac12\right)p^r -G_1(0)p^r \equiv 0 \mod p^{2r}$$
holds $\forall r$. Letting $r \to \infty$ gives
\begin{equation}\label{eq:log41}
\frac1p\log_p4^{p-1} +G_1\left(\frac12\right) -G_1(0)=0.
\end{equation}
The equation below is derived similarly from the
the Kazandzidis supercongruence. We go through two iterations of Legendre's duplication formula
\eqref{eq:Legendre} and four applications
of 1) of Lemma~\ref{GammaP} to get
$$1\equiv \binom{4p^r}{2p^r}/\binom{4p^{r-1}}{2p^{r-1}}=2^{4(p^r-p^{r-1})}
\frac{\G_p\left(\frac14+p^r\right)\G_p\left(\frac34+p^r\right)
\G_p\left(\frac12\right)\G_p(1)}{\G_p(1+p^r)\G_p\left( \frac12+p^r\right)
\G_p\left(\frac14\right)\G_p\left(\frac34\right)}
\mod p^{2r}.$$
Note that $\G_p\left(\frac14\right)\G_p\left(\frac34\right)=(-1)^{a_0(1/4)}$, $\G_p(1)=-1$ and
$\G_p\left(\frac12\right)$ is some $4$th root of unity. Thus $\log_p$ of the constant part of the expression
above is $0$.
Taking $\log_p$ and using Theorem~\ref{truncate} to
reduce mod $p^{2r}$  we see
\begin{equation}\label{eq:log42}
\frac2p \log_p(4^{p-1})+G_1\left(\frac14\right)+G_1\left(\frac34\right) -G_1(1)-
G_1\left(\frac12\right)=0.
\end{equation}
Subtracting (\ref{eq:log41}) from (\ref{eq:log42}) and using Corollary~\ref{reflect}
gives
\begin{prop}\label{prop:G14G12} $\displaystyle \frac1p \log_p 4^{p-1} =2G_1\left(\frac12\right)-2G_1
\left(\frac14\right)$. Multiplying by $\displaystyle \frac{p^r}4$ and exponentiating, this becomes
$$\displaystyle 4^{\frac{p^r-p^{r-1}}4}
\equiv
(-1)^{\frac{p^2-1}8} \left [ 1 + \left( G_1\left(\frac12\right)-G_1\left(\frac14\right)\right)\frac{p^r}2 \right ] \mod p^{2r} .$${    The sign  $(-1)^{\frac{p^2-1}8}$ is due to the Legendre symbol
$\left (\frac{2}p \right )=(-1)^{\frac{p^2-1}8}$.}
\end{prop}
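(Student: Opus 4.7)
The plan is to derive two linear relations among the values $G_1(0)$, $G_1(1/2)$, $G_1(1/4)$, $G_1(3/4)$, and $\frac{1}{p}\log_p 4^{p-1}$ from two applications of the Kazandzidis supercongruence \eqref{eq:RZ}, and then subtract them to eliminate the $G_1$ values at integer points.

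First I would apply \eqref{eq:RZ} with $(n,m)=(2,1)$, giving $\binom{2p^r}{p^r}\equiv\binom{2p^{r-1}}{p^{r-1}}\mod p^{3r}$; by Lemma~\ref{binomunit} the denominator is a $p$-adic unit, so the ratio equals $1$ modulo $p^{3r}$. Rewriting each binomial via the Legendre duplication formula \eqref{eq:Legendre} and then applying part 1) of Lemma~\ref{GammaP} to convert the resulting $\G$-quotients to $\G_p$-quotients produces the identity displayed just before \eqref{eq:Gvalues}. Taking $\log_p$, Taylor-expanding each $\G_p(a+p^r)/\G_p(a)$ via Theorem~\ref{truncate}, and truncating the ensuing logarithms by Lemma~\ref{specialtruncate} yields \eqref{eq:Gvalues}; reading off the order-$p^r$ coefficient produces \eqref{eq:log41}. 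Since dividing through by $p^r$ gives an identity holding mod $p^r$ for every $r\geq 1$, it is actually an equality in $\Q_p$.

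Next I would repeat the procedure with $(n,m)=(4,2)$: two iterations of Legendre duplication and four applications of Lemma~\ref{GammaP} yield an analogous expression for $\binom{4p^r}{2p^r}/\binom{4p^{r-1}}{2p^{r-1}}\equiv 1\mod p^{3r}$. The $r$-independent $\G_p$-factors $\G_p(1/4)\G_p(3/4)$, $\G_p(1/2)$ and $\G_p(1)$ combine via part 3) and 4) of Proposition/Definition~\ref{pGamma} into a root of unity whose $\log_p$ vanishes; the same Taylor-plus-$\log_p$ procedure produces \eqref{eq:log42}. Subtracting \eqref{eq:log41} from \eqref{eq:log42} and applying Corollary~\ref{reflect} (so that $G_1(3/4)=G_1(1/4)$ and $G_1(1)=G_1(0)$) cancels the $G_1$ values at $0$ and $1$, leaving $\frac{1}{p}\log_p 4^{p-1} = 2G_1(1/2) - 2G_1(1/4)$. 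For the second assertion I would multiply this by $p^r/4$ to get $\log_p 4^{(p^r-p^{r-1})/4} = \frac{p^r}{2}\bigl(G_1(1/2) - G_1(1/4)\bigr)$, interpreting $4^{(p^r-p^{r-1})/4}$ as $2^{(p^r-p^{r-1})/2}$ so it is well-defined for $p\equiv 3\mod 4$ as well. Euler's criterion gives $2^{(p-1)/2}\equiv \left(\tfrac{2}{p}\right) = (-1)^{(p^2-1)/8}\mod p$, so $4^{(p^r-p^{r-1})/4} = (-1)^{(p^2-1)/8}\cdot u$ with $u\equiv 1\mod p$; applying $\log_p$ to $u$ and then exponentiating back using Lemma~\ref{specialtruncate} (in the form $e^x\equiv 1+x\mod p^{2r}$ for $v_p(x)\geq r$) delivers the stated congruence.

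The principal obstacle is bookkeeping: tracking the signs from part 2) of Proposition/Definition~\ref{pGamma} together with the $(-1)^m p^{mp^{r-1}}$ factors of Lemma~\ref{GammaP} through both binomial computations, and checking that the $r$-independent $\G_p$ constants contribute nothing after $\log_p$. A secondary subtlety is verifying that Theorem~\ref{truncate} applied with $t=2$ provides enough precision, namely that the Taylor remainders are genuinely $O(p^{3r})$, so that after dividing by $p^r$ the subtracted relation is exact rather than merely a mod-$p^r$ congruence.
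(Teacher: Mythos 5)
Your proposal is correct and follows essentially the same route as the paper: the two Kazandzidis congruences for $\binom{2p^r}{p^r}$ and $\binom{4p^r}{2p^r}$, Legendre duplication plus Lemma~\ref{GammaP} to pass to $\G_p$-quotients, $\log_p$ and Taylor truncation to extract the order-$p^r$ coefficients (with $r\to\infty$ giving exact equalities), and subtraction together with Corollary~\ref{reflect} to eliminate $G_1(0)$ and $G_1(1)$. The final exponentiation step with the sign identified via $\left(\frac{2}{p}\right)=(-1)^{\frac{p^2-1}{8}}$ also matches the paper's argument.
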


We need \eqref{eq:4power2} below for use in \eqref{eq:for35} and \eqref{eq:5.15}. We only sketch its derivation.
It requires two applications of the duplication formula, Euler's reflection formula ~\eqref{eq:Eulerrefl}
and $2$) and the variant of $3$) of Lemma~\ref{GammaP}.
  Multiplying the congruences
$\displaystyle\binom{2p}{p}/\binom{2}{1}\equiv 1\mod p^3$ and $\displaystyle\binom{4p}{2p}
/\binom{4}{2}
\equiv 1\mod p^3$ we have
$$64^{p-1}\frac{\G_p\left(\frac14+p\right)\G_p\left(\frac34+p\right)
}{\G_p(1+p)^2\G_p(\frac 14)\G_p(\frac 34)} \equiv 1\mod p^3.$$ Thus
\begin{equation*}
64^{(p-1)}\equiv  1+\left(2G_1(0)-2G_1\left(\frac 14\right)\right)p +
2\left( G_1(0)-G_1\left(\frac 14\right)\right)^2 p^2  \mod p^3
\end{equation*}It follows again from  $\left ( \frac 2p\right )=(-1)^{\frac{p^2-1}8}$
and Lemma~\ref{specialtruncate}
that
\begin{equation}\label{eq:4power2}
2^{\frac{p-1}2}\equiv  (-1)^{\frac{p^2-1}8}
\left[ 1+\frac16\left(G_1(0)-G_1\left(\frac 14\right)\right)p +
\frac 1{72} \left( G_1(0)-G_1\left(\frac 14\right)\right)^2p^2 \right ] \mod p^3
\end{equation}
and cubing this we have
\begin{equation}\label{eq:8power2}
2^{\frac{3(p-1)}2}\equiv  (-1)^{\frac{p^2-1}8}
\left[ 1+{\frac12}\left(G_1(0)-G_1\left(\frac 14\right)\right)p +
\frac 18 \left( G_1(0)-G_1\left(\frac 14\right)\right)^2p^2 \right ] \mod p^3
\end{equation}

\subsection{Approximating the $_2F_1$ ratio and the Proof of Theorem \ref{thm:1}}
We are ready to prove Theorem \ref{thm:1}. We will need the following
{\it Pfaff transform} (see  \cite[eq. (2.3.14), pp. 79]{AAR})
\begin{equation}\label{eq:pfaff1}
\pFq{2}{1}{-n,b}{c}{x}=\frac{(c-b)_n}{(c)_n} \, \pFq{2}{1}{-n,b}{b+1-n-c}{1-x}.
\end{equation}
as well as \emph{Kummer's evaluation formula}, see (2.11) of \cite{Whipple}
\begin{equation}\label{eq:Kummer1}
\pFq{2}{1}{a,b}{1+a-b}{-1}=\frac{\G(1+a-b)\G\left(1+\frac a2\right)}{\G\left(1+\frac a2-b\right)\G(1+a)}
\end{equation}

\begin{proof}[Proof of Theorem~\ref{thm:1}] Recall $\G\left(\frac12\right)=\sqrt{\pi}$
and Euler's reflection formula\begin{equation}\label{eq:Eulerrefl}
\G(z)\G(1-z) =\displaystyle \frac{\pi}{\sin (\pi z) }.
\end{equation}
We   let $x=2$ and $\displaystyle n=\frac{p^r-1}2$, $\displaystyle b=\frac 12,c=1$ in \eqref{eq:pfaff}.
\begin{eqnarray*}
\pFq{2}{1}{\frac{1-p^r}2,\frac {1}2}{1}{2}&\overset{\text{(by  \eqref{eq:pfaff1})}}{=}&
\frac{\left(\frac12\right)_{\frac{p^r-1}2}}{(1)_{\frac{p^r-1}2}}\,
\pFq{2}{1}{\frac{1-p^r}2, \frac12}{1-\frac{p^r}2}{-1}\\&
\overset{\text{(by \eqref{eq:Kummer1})}}{=} &\frac{(\frac12)_{\frac{p^r-1}2}}{(1)_{\frac{p^r-1}2}}\cdot
\frac{\G\left(1-\frac{p^r}2\right)\G\left(\frac{5-p^r}4\right)}{\G\left(\frac{3-p^r}4\right)
\G\left(\frac{3-p^r}2\right)} \\
&=& \frac{\G\left(\frac{p^r}2\right)/\G\left(\frac12\right)}{\G\left(\frac{1+p^r}2\right)/\G(1)}\cdot
\frac{\G\left(1-\frac{p^r}2\right)\G\left(\frac{5-p^r}4\right)}{\G\left(\frac{3-p^r}4\right)\G\left(\frac{3-p^r}2\right)}
\quad \text{(definition of rising factorials)}\\
&=&\frac{\G\left(\frac{p^r}2\right)\G(1)}{\G\left(\frac{1+p^r}2\right)
\G\left(\frac12\right)}\cdot
\frac{2^{\frac{p^r-1}2}\sqrt{\pi}\G\left(1-\frac{p^r}2\right)}{\G\left(\frac{3-p^r}4\right)^2}
\quad\text{ (set $z= (3-p^r)/4$ in \eqref{eq:Legendre})}\\
&=&
\frac{\G\left(\frac{p^r}2\right)}{\G\left(\frac{1+p^r}2\right)}\cdot
\frac{2^{\frac{p^r-1}2}\G\left(1-\frac{p^r}2\right)}{\G\left(\frac{3-p^r}4\right)^2}\\&
\overset{\text{ (by~\eqref{eq:Eulerrefl})}}{=} &\frac{2^{\frac{p^r-1}2}\pi}{\G\left(\frac{1+p^r}2\right)\G\left(\frac{3-p^r}4\right)^2} \quad .
\end{eqnarray*}

Recall $p\equiv 1\mod 4$ so $\G_p\left(\frac 12\right)^2=-1$. Then
\begin{eqnarray*}
\frac{\pFq{2}{1}{\frac{1-p^r}2\,\,\frac {1}2}{1}{2}}{\pFq{2}{1}{\frac{1-p^{r-1}}2\,\,\frac {1}2}{1}{2}}
&=&
4^{\frac{p^r-p^{r-1}}4}\frac{\G\left(\frac{1+p^{r-1}}2\right)\G\left(\frac{3-p^{r-1}}4\right)^2}
{\G\left(\frac{ 1+p^r}2\right)\G\left(\frac{3-p^r}4\right)^2}\\
&\overset{\text{ (by~\eqref{eq:Eulerrefl})}}{=}&4^{\frac{p^r-p^{r-1}}4}\frac{\G\left(\frac{1+p^{r-1}}2\right)\G\left(\frac{1+p^{r}}4\right)^2}
{\G\left(\frac{ 1+p^r}2\right)\G\left(\frac{1+p^{r-1}}4\right)^2} \quad \\
&\overset{}{=}&  4^{\frac{p^r-p^{r-1}}4}
\frac{  \left[ \frac{ (-1)^{\frac{1+p^r}4-\frac12} p^{\frac{1+p^{r-1}}4 -\frac12} \G_p\left(\frac{1+p^r}4\right)  }{\G_p\left(\frac12\right)}\right]^2} {(-1)^{\frac{1+p^r}2}p^{\frac{1+p^{r-1}}2 -1}\G_p\left(\frac{1+p^r}2\right)}\quad \text{ ( $2$) of Lemma~\ref{GammaP})}\\
&=&-4^{\frac{p^r-p^{r-1}}4} \frac{  \G_p\left(\frac{1+p^r}4\right)^2}{\G_p(\frac12)^2 \G_p\left(\frac{1+p^r}2\right)}\\
&=& 4^{\frac{p^r-p^{r-1}}4}\frac{\G_p\left(\frac{1+p^r}4\right)^2}{\G_p\left(\frac{1+p^r}2\right)}  \end{eqnarray*}
To continue, we apply Proposition \ref{prop:G14G12} to the last term to conclude {
\begin{eqnarray*}
\frac{\pFq{2}{1}{\frac{1-p^r}2\,\,\frac {1}2}{1}{2}}{\pFq{2}{1}{\frac{1-p^{r-1}}2\,\,\frac {1}2}{1}{2}}
&\equiv & (-1)^{\frac{p^2-1}8}
\left[1+\left(G_1\left(\frac12\right)-G_1\left(\frac14\right)\right)\frac{p^r}2
\right] \\
&&\cdot \frac{
\G_p\left(\frac14\right)^2}{\G_p\left(\frac12\right)} \frac{\left[1+G_1\left(\frac14\right)\frac{p^r}4\right]^2}{\left[1+G_1\left(\frac12\right)\frac{p^r}2\right]}\mod p^{2r}
\quad \text{}\\
&\equiv& (-1)^{\frac{p^2-1}8}\frac{ \G_p\left(\frac14\right)^2}{\G_p\left(\frac12\right)}
 \mod p^{2r}
\end{eqnarray*}}
the last congruence following as the coefficients of the $p^r$ terms cancel.
\end{proof}

\section{Proof of a strengthened conjecture of van Hamme and a Proposition}\label{ss:vanhamme}
We prove Theorem \ref{thm:2} in this section. We first prove a mod $p^6$ congruence between the $_7F_6$ in
question with a perturbed $_7F_6$. See \eqref{eq:7.2}. The perturbed version has a Galois symmetry with respect to
the $5$th roots of unity. This first congruence follows from somewhat involved  calculations and a use of Whipple's
well-posed $_7F_6$ evaluation formula. The perturbation we choose is suited
for using Dougall's formula to write our Galois symmetric perturbed $_7F_6$
expression as a $\G$-quotient and then a $\G_p$-quotient. The Galois symmetry in the $\G_p$-quotient
implies that in its $5$th degree Taylor expansion, the coefficients of $p^k$ for $1\leq k \leq 4$ vanish, so
we get a mod $p^5$ approximation. An extra factor of $p$ arises naturally
so in the end the congruence holds  mod $p^6$.

\subsection{A congurence with a perturbed $_7F_6$.   }
Let
$\zeta_5$ be any primitive $5$th root of unity. For $p \equiv 1 \mod 6$,
we have $\displaystyle \frac13 + \frac{p-1}3 = p\cdot \frac13$ so
 $\displaystyle\left(\frac 13\right)'=\frac 13$. For
$p\equiv 5\mod 6$ we have
$\displaystyle \frac13 + \frac{2p-1}3 = p\cdot \frac23$ so
 $\displaystyle\left(\frac 13\right)'=\frac 23$ in this case.
For convenience we write $\displaystyle
\left(\frac 13\right)'=
\frac {t}3$ where $t=1$ or $2$ as appropriate.
In either case
$\displaystyle \frac{1-tp}3 \in -\N$ so when this quantity appears in the top of an $_aF_b$ expression
the series terminates.

We are first going to show
\begin{multline}\label{eq:7.2}\pFq{7}{6}{\frac13\,\,\frac76\,\,\frac{1-\zeta_5 tp}3\,\, \frac{1-\zeta_5^2tp}3\,\, \frac{1-\zeta_5^3 tp}3\,\, \frac{1-\zeta_5^4tp}3\,\, \frac{1-tp}3}{\frac 16\,\,1+\frac{\zeta_5 tp}3\,\,
1+\frac{\zeta_5^2 tp}{3}\,\,1+\frac{\zeta_5^3 tp}{3}\,\, 1+\frac{\zeta_5^4 tp}{3}\,\, 1+\frac {tp}3}{1}\\\equiv \, \pFq{7}{6}{\frac13\,\,\frac76\,\,\frac13\,\,\frac13\,\,\frac13\,\,\frac13\,\,\frac13}{1\,\,1\,\,1\,\,1\,\,1\,\,1}{1}_{\frac{tp-1}3}  \equiv \, \pFq{7}{6}{\frac13\,\,\frac76\,\,\frac13\,\,\frac13\,\,\frac13\,\,\frac13\,\,\frac13}{1\,\,1\,\,1\,\,1\,\,1\,\,1}{1}_{{p-1}} \mod p^6.\end{multline}
The last congruence holds since
for $\frac {tp-1}3<k<p$ we have
$\left(\frac 13\right)_k \equiv 0 \mod p$
 and $(1)_k \not \equiv 0 \mod p$.

Consider a perturbed  terminating series  $$\,\pFq{7}{6}{\frac13,\frac76,\frac 13-\zeta_5 x,\frac 13-\zeta_5^2x,\frac 13-\zeta_5^3x,\frac 13-\zeta_5^4x,\frac{1-tp}3,}{\frac 16, 1+\zeta_5 x,1+\zeta_5 x^2, 1+\zeta_5^3 x, 1+\zeta_5^4 x,1+\frac {tp}3}{1}.$$ As a rational function of $x$, its coefficient ring is
$\Z[\zeta_5]$. However, it is Galois symmetric so   the coefficient ring is $\Z$.

\begin{lemma} \label{risingfraction}
 $\displaystyle
\frac{\left( \frac13-x\right)_k}{(1+x)_k}=\frac{\left(\frac13\right)_k}{(1)_k}[1+a_{k,1}x+a_{k,2}x^2+\cdots] \in \Z_p[[x]]$
for $\displaystyle k \le \frac{tp-1}3$.
\end{lemma}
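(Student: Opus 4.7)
The plan is to factor the ratio into elementary pieces and verify $p$-integrality factor by factor. Writing both Pochhammer symbols as products and pulling out the constant terms gives
\begin{equation*}
\frac{\left(\frac13-x\right)_k}{(1+x)_k} = \prod_{j=0}^{k-1} \frac{\frac13 + j - x}{j+1+x} = \frac{\left(\frac13\right)_k}{(1)_k}\prod_{j=0}^{k-1} \frac{1 - x/\bigl(\frac13+j\bigr)}{1 + x/(j+1)}.
\end{equation*}
Each factor on the right expands as a geometric-type power series in $x$ whose coefficients lie in $\Z_p$ provided $\frac13+j$ and $j+1$ are $p$-adic units. Writing $\frac13+j=(3j+1)/3$ and using $p\ge 5$, I reduce to checking that $3j+1$ and $j+1$ are coprime to $p$ for every $0\le j\le k-1$.

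The hypothesis $k\le (tp-1)/3$ immediately gives $j+1\le (tp-1)/3<p$, so $j+1$ is a unit. For $3j+1$ I would split on $t$. When $t=1$ (so $p\equiv 1\mod 6$), $3j+1\le 3k-2\le p-3$, and coprimality is automatic. When $t=2$ (so $p\equiv 5\mod 6$), the bound only yields $3j+1\le 2p-3$, but $p\equiv 2\mod 3$ forces $(p-1)/3\notin\Z$, so the equation $3j+1=p$ has no integer solution; hence $3j+1$ still avoids $p\Z$. The same unit check shows that $(1/3)_k/(1)_k$ is itself a $p$-adic unit, so the whole ratio lies in $\Z_p[[x]]$ and the leading $a_{k,0}$ coefficient is $1$ as asserted.

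The main obstacle is minor — just the case split on $t$ needed to ensure the denominators $3j+1$ and $j+1$ stay coprime to $p$ throughout the range $0\le j\le k-1$. The key observation is that the arithmetic progressions $\{1,4,7,\dots\}$ and $\{1,2,3,\dots\}$ appearing as denominators both avoid $p\Z$ precisely because $k$ is constrained by $(tp-1)/3$.
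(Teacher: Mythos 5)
Your proof is correct and follows essentially the same route as the paper: pull out the constant terms $\left(\frac13\right)_k$ and $(1)_k$, expand the remaining factors as power series, and check that every denominator $3j+1$ and $j+1$ for $0\le j\le k-1$ is a $p$-adic unit. Your explicit case split on $t$ (using $p\equiv 2\bmod 3$ to rule out $3j+1=p$ when $t=2$) spells out a detail the paper leaves implicit, but the argument is the same.
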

\begin{proof}From
\begin{multline}\label{eq:oneonethird}
\left(\frac13-x\right)_k=\prod_{j=0}^{k-1}\left(\frac 13+j-x\right)=
\prod_{j=0}^{k-1}\left(\frac 13+j\right)\left(1-\frac{x}{\left(\frac 13+j\right)}\right)\\=
\left(\frac 13\right)_k \left [1-\sum_{j=0}^{k-1} \frac{3x}{1+3j}x+\sum_{0\le i< j\le k-1} \frac{9x^2}{(1+3i)(1+3j)}+\cdots \right ]\end{multline}
 we see that for $k$ in the specified range that none of the denominators
in~\eqref{eq:oneonethird} are multiples of $p$. It is trivial to see that for these $k$
 the constant term of the polynomial $(1+x)_k$ is {\bf not} divisible by $p$.
Thus its reciprocal, when viewed as a power series, has $p$-integral coefficients.

The last claim follows from Definition \ref{def:a'}.
\end{proof}
\begin{lemma}
With $a_{k,i}$ as in Lemma~\ref{risingfraction}
the congruence and equality below of  terminating power series in $x,y,z,w$ hold.
\begin{multline}\label{eq:morewhipple}
\,\pFq{7}{6}{\frac13\,\,\frac76\,\,\frac 13-x\,\,\frac 13-y\,\,\frac 13-z\,\,\frac 13-w\,\,\frac {1-tp}3}
{\frac 16\,\, 1+x\,\,1+y\,\, 1+z\,\, 1+w\,\,1+
\frac {tp}3}{1}\\ \equiv \pFq{7}{6}{\frac13\,\,\frac76\,\,\frac 13-x\,\,\frac 13-y\,\,\frac 13-z\,\,\frac 13-w\,\,\frac{1}3}
{\frac 16\,\, 1+x\,\,1+y\,\, 1+z\,\, 1+w\,\,1}{1}_{{\frac {tp-1}3}} \mod p  \\
 =\sum_{k=0}^{\frac{tp-1}3} (6k+1)\frac{(\frac 13)_k^6}{(1)_k^6} \times \left [1+\sum_{i\ge 1} a_{k,i}x^i \right ] \left [1+\sum_{i\ge 1} a_{k,i}y^i \right ]\left [1+\sum_{i\ge 1} a_{k,i}z^i\right ] \left [1+\sum_{i\ge 1} a_{k,i}w^i \right].
\end{multline}
Furthermore, the former series is in   $p\Z_p[[x,y,z,w]]$.
\end{lemma}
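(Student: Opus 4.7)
The strategy handles the three claims in order. For the mod $p$ congruence, both series terminate at $k = \tfrac{tp-1}{3}$ (the LHS because $(\tfrac{1-tp}{3})_k$ vanishes beyond that, the RHS by explicit truncation), so it suffices to compare termwise. For $0 \le k \le \tfrac{tp-1}{3}$ the ratio of $k$th coefficients is
\begin{equation*}
\frac{(\tfrac{1-tp}{3})_k (1)_k}{(\tfrac{1}{3})_k (1+\tfrac{tp}{3})_k}
= \prod_{j=0}^{k-1}\frac{(\tfrac{1}{3}+j-\tfrac{tp}{3})(1+j)}{(\tfrac{1}{3}+j)(1+j+\tfrac{tp}{3})},
\end{equation*}
and since both $\tfrac{1}{3}+j$ and $1+j$ are $p$-adic units for $0 \le j < k \le \tfrac{tp-1}{3}$ (the unique residue in $\{0,\ldots,p-1\}$ making $\tfrac{1}{3}+j \equiv 0 \pmod p$ lies outside this range in both residue-class cases), each factor of the product reduces to $1 \pmod p$. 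The subsequent equality is immediate from Lemma~\ref{risingfraction} applied to each pair $(\tfrac{1}{3}-q, 1+q)$ for $q \in \{x,y,z,w\}$, together with $\tfrac{(7/6)_k}{(1/6)_k} = 6k+1$.

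For the $p\Z_p[[x,y,z,w]]$ assertion, the natural tool is Whipple's transformation for a terminating very-well-poised $_7F_6$. With the identifications $a = \tfrac{1}{3}$, $b = \tfrac{1}{3}-x$, $c = \tfrac{1}{3}-y$, $d = \tfrac{1}{3}-z$, $e = \tfrac{1}{3}-w$ and $-N = \tfrac{1-tp}{3}$, Whipple writes the LHS as
\begin{equation*}
\frac{(\tfrac{4}{3})_N\, (\tfrac{2}{3}+x+y)_N}{(1+x)_N (1+y)_N} \cdot {}_4F_3\!\left[\begin{array}{c}\tfrac{1}{3}-x,\ \tfrac{1}{3}-y,\ \tfrac{2}{3}+z+w,\ \tfrac{1-tp}{3} \\ 1+z,\ 1+w,\ \tfrac{2-tp}{3}-x-y\end{array}; 1\right].
\end{equation*}
The source of the desired $p$ is the factor $(\tfrac{4}{3})_N = 3^{-N}\prod_{j=0}^{N-1}(4+3j)$: when $p \equiv 1 \pmod 6$ the product contains $4+3\cdot\tfrac{p-4}{3} = p$, and when $p \equiv 5 \pmod 6$ it contains $4+3\cdot\tfrac{2p-4}{3} = 2p$, so $(\tfrac{4}{3})_N \in p\Z_p$ in either case. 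The remaining Pochhammer symbols $(1+x)_N,(1+y)_N,(1+z)_k,(1+w)_k$ all have $p$-adic unit constant term (since $N,k<p$) and are invertible as power series, while an inspection shows that the potential $p$ in the denominator $(\tfrac{2-tp}{3}-x-y)_k$ (arising when $p\equiv 5\pmod 6$ and $k>\tfrac{p-2}{3}$) is counterbalanced after summation by a matching $p$ from the numerator factor $(\tfrac{2}{3}+z+w)_k$. Consequently the product lies in $p\Z_p[[x,y,z,w]]$.

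The main obstacle is the delicate $p$-adic bookkeeping inside the ${}_4F_3$ in the case $p\equiv 5\pmod 6$: the $p$-divisibility in the numerator $(\tfrac{2}{3}+z+w)_k$ and in the denominator $(\tfrac{2-tp}{3}-x-y)_k$ involves disjoint sets of variables, so the cancellation is not termwise but only emerges after summing over $k$. A backup route, if the Whipple bookkeeping becomes unwieldy, is to verify directly from the original $_7F_6$ that each $k$th term already lies in $\Z_p[[x,y,z,w]]$ (by termwise inspection of the relevant Pochhammer symbols), and then to reduce the $p\Z_p$-claim to the constant-term statement $\sum_{k=0}^N (6k+1)\tfrac{(1/3)_k^6}{k!^6} \in p\Z_p$ together with a symmetry argument controlling higher-order coefficients.
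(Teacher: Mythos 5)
Your treatment of the mod $p$ congruence and of the equality matches the paper's (which dismisses both in a single line), and your route to the integrality claim — Whipple's transformation plus the observation that $\left(\tfrac43\right)_{\frac{tp-1}{3}}$ supplies the factor of $p$ — is exactly the paper's. The genuine gap is the case $p\equiv 5\pmod 6$, which you correctly flag but do not close: you pair the problematic factor of the denominator parameter $\left(\tfrac{2-tp}{3}-x-y\right)_k$ with the ${}_4F_3$'s numerator parameter $\tfrac23+z+w$, observe that the variables do not match, and conclude that the cancellation ``only emerges after summing over $k$.'' No argument is given for why a sum of terms whose coefficients have negative $p$-adic valuation should land in $p\Z_p[[x,y,z,w]]$, and the backup route (a constant-term statement plus ``a symmetry argument controlling higher-order coefficients'') is not viable either: the four-variable series has no symmetry constraining its higher coefficients — the $\zeta_5$-symmetry only enters after the later specialization $x=\zeta_5u$, etc.

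The cancellation is in fact exact and termwise, but it comes from the \emph{prefactor}, not from inside the ${}_4F_3$. In your labelling the prefactor numerator is $\left(\tfrac23+x+y\right)_N$ with $N=\tfrac{tp-1}{3}$; when $p\equiv 5\pmod 6$ this product contains, at $j=\tfrac{p-2}{3}$, the linear factor $\tfrac{p}{3}+x+y$, which is exactly $-1$ times the offending factor $-\tfrac{p}{3}-x-y$ occurring in $\left(\tfrac{2-tp}{3}-x-y\right)_k$ for every $k\geq\tfrac{p+1}{3}$. Multiplying the prefactor into the $k$-th term of the ${}_4F_3$ cancels these two linear factors outright, leaving only Pochhammer factors with unit constant terms, so each product (prefactor)$\times$($k$-th term) lies in $p\Z_p[[x,y,z,w]]$, the $p$ coming from $\left(\tfrac43\right)_N$ as you observed. (For what it is worth, the paper's own assertion that \emph{all} denominator constant terms of the ${}_4F_3$ are units is likewise too strong when $t=2$; the prefactor cancellation just described is what repairs both arguments.)
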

\begin{proof} The equality follows immediately from Lemma~\ref{risingfraction}  and the congruence is obvious.

To deal with the last claim,
we need  Whipple's well-posed $_7F_6$
evaluation formula (\cite[Theorem $3.4.5$]{AAR}) below
\begin{multline}\label{eq:whip}
\pFq{7}{6}{a\quad 1+\frac a2\quad b\quad c\quad d\quad e\quad f}{\frac12 a\,\, 1+a-b\,\,1+a-c\,\,1+a-d\,\,1+a-e\,\,1+a-f}{1}=\\
\frac{\G(1+a-d)\G(1+a-e)\G(1+a-f)\G(1+a-d-e-f)}{\G(1+a)\G(1+a-e-f)\G(1+a-d-e)\G(1+a-d-f)} \\ \times \,
\pFq{4}{3}{1+a-b-c\quad d\quad e\quad f}{d+e+f-a\quad 1+a-b\quad 1+a-c}{1}
\end{multline}
This holds when
the left side converges and the right side terminates, both of which happen. The $\G$-quotients are all rising factorials
of the form $(*)_{-f}$.

We apply \eqref{eq:whip} to
$\pFq{7}{6}{\frac13,\frac76,\frac 13-x,\frac 13-y,\frac 13-z,\frac 13-w,\frac {1-tp}3}{\frac 16, 1+x,1+y, 1+z, 1+w,1+\frac{tp}3}{1}$ to get
\begin{equation}\label{eq:anotherwhip}
\frac{ \left(\frac43\right)_{  {\frac{tp-1}3}} \left(\frac23+z+w\right)_{\frac{tp-1}3}}
{ (1+z)_{  {\frac{tp-1}3}  }  (1+w)_{  {\frac{tp-1}3}} }
\pFq{4}{3} {\frac23+x+y, \frac13-z,\frac13-w,\frac {1-tp}3,}
{\frac{2-tp}3-z-w,1+x, 1+y}{1}.
\end{equation}
Note $\displaystyle
\left(\frac43\right)_{ {\frac{tp-1}3} } =   \frac{tp}3
\left(\frac43\right)_{  {\frac{tp-4}3}} $.
The  rising factorials in the denominator of~\eqref{eq:anotherwhip} have constant terms that are units in $\Z_p$ as do all
terms in the denominators of the  terminating $_4F_3$ series.
\end{proof}

The coefficient of $x^5$ in \eqref{eq:morewhipple},
$\displaystyle
\sum_{k=0}^{  {\frac{tp-1}3}} (6k+1)\frac{\left(\frac 13\right)_k^6}{(1)_k^6} a_{k,5}$,
is in
$p\Z_p$.  Similarly, the coefficients of $x^4y$ etc. are in $p\Z_p$. We conclude that
$$\sum_{k=0}^{ {\frac{tp-1}3}}  (6k+1)\frac{\left(\frac 13\right)_k^6}{(1)_k^6} a_{k,4}a_{k,1},
\sum_{k=0}^{ {\frac{tp-1}3}} (6k+1)\frac{\left(\frac 13\right)_k^6}{(1)_k^6} a_{k,3}a_{k,2}$$ etc. are all in $p\Z_p$.

Letting $x=\zeta_5 u$, $y=\zeta_5^2u$, $z=\zeta_5^3u$ and $w=\zeta_5^4u$ above, set
\begin{multline}F(u):=
\,\pFq{7}{6}{\frac13,\frac76,\frac 13-\zeta_5 u,\frac 13-\zeta_5^2u,\frac 13-\zeta_5^3u,\frac 13-\zeta_5^4u,\frac{1}3-u,}{\frac 16, 1+\zeta_5 u,1+\zeta_5 u^2, 1+\zeta_5^3 u, 1+\zeta_5^4 u,1+u}
{1}_{\frac {tp-1}3}\\=C_0+\sum_{i > 0} C_{5i}u^{5i}\in \Z_p[[u^5]]
\end{multline}
with the equality following from
the symmetry with respect to all $5$th roots of unity.
If $C_5\in p\Z_p$, \eqref{eq:7.2} will follow
by setting $u=0$ and $ \displaystyle\frac{tp}3$ respectively.
The series expansion of $F(u)$ is
$$\sum_{k=0}^{{\frac{tp-1}3}} (6k+1)\frac{\left(\frac 13\right)_k^6}{(1)_k^6}
\times \prod_{j=0}^4 [1+\sum_{i\ge 1} a_{k,i}(\zeta_5^j u)^i].$$
Thus
$\displaystyle C_5=\sum_{k=0}^{{\frac{tp-1}3}} (6k+1)\frac{\left(\frac 13\right)_k^6}{(1)_k^6} [H(a_{k,i})]$
where $H(a_{k,i})$ in an integral polynomial in the $a_{k,i}$ where the second subscripts
in each monomial add to $5$. By the argument of the previous paragraph
$C_5 \in p\Z_p$ so~\eqref{eq:7.2} holds.

\subsection{ Dougall's formula, a $\G_p$-quotient, Galois symmetry, the proof of Theorem \ref{thm:2} and
a proposition. }
To prove Theorem~\ref{thm:2}
it remains to show  the left side of ~\eqref{eq:7.2}  is congruent to the right side in Theorem \ref{thm:2} mod
$ p^6$. We use
Dougall's formula (c.f. Theorem 3.5.1 of \cite{AAR} or  (8.2) of \cite{Whipple}) which
asserts that for  $f$  a negative integer and $1+2a=b+c+d+e+f$ that
\begin{multline}\label{eq:7.1}
\pFq{7}{6}{a\quad  1+\frac a2\quad b\quad c\quad d\quad e\quad f}{\frac a2\quad 1+a-b\quad 1+a-c\quad 1+a-d\quad 1+a-e\quad 1+a-f}{1}
\\
=\frac{(a+1)_{-f}(a-b-c+1)_{-f}(a-b-d+1)_{-f}(a-c-d+1)_{-f}}{(a-b+1)_{-f}(a-c+1)_{-f}(a-d+1)_{-f}(a-b-c-d+1)_{-f}}
\end{multline}
Set $\displaystyle a=\frac 13$, $\displaystyle b=\frac{1-\zeta_5 tp}3 $,
$\displaystyle c=
\frac{1-\zeta_5^2 tp}3 $, $\displaystyle d=\frac{1-\zeta_5^3 tp}3$,
$\displaystyle e=\frac{1-\zeta_5^4t p}3 $ and
$\displaystyle f=\frac{1-tp }3$. As $\zeta_5$ is a primitive $5$th root of unity, it
satisfies $1+\zeta_5+\cdots+\zeta_5^4=0$ and hence $1+2a=b+c+d+e+f$ so Dougall's formula  applies.
We get
\begin{multline}\label{eq:tp}
\pFq{7}{6}{\frac13\quad \frac76\quad \frac{1- \zeta_5 t p}3\quad \frac{1-\zeta_5^2t p}3\quad \frac{1-\zeta_5^3 tp}3\quad
\frac{1-\zeta_5^4tp}3\quad \frac{1-tp}3\quad }{\frac 16\quad  1+\frac{\zeta_5 tp}3\quad
1+\frac{\zeta_5^2 tp}3\quad  1+\frac{\zeta_5^3 tp}3\quad  1+\frac{\zeta_5^4 tp}3\quad 1+\frac{tp}3}{1}\\
=\frac{
\left(\frac43\right)_{ \frac{tp-1}3} \left( \frac{2+\zeta_5 tp +\zeta_5^2 tp}3 \right)_{ \frac{tp-1}3}
\left( \frac{2+\zeta_5 tp +\zeta_5^3 tp}3 \right)_{ \frac{tp-1}3}
\left( \frac{2+\zeta_5^2 tp +\zeta_5^3 tp}3 \right)_{ \frac{tp-1}3}
}
{ \left( 1+\frac{\zeta_5 tp}3 \right)_{\frac{tp-1}3}
\left( 1+\frac{\zeta_5^2 tp}3 \right)_{\frac{tp-1}3}
\left( 1+\frac{\zeta_5^3 tp}3 \right)_{\frac{tp-1}3}
\left( \frac{1+\zeta_5 tp+\zeta_5^2 tp+\zeta_5^3 tp}3 \right)_{\frac{tp-1}3}
 }\\
\end{multline}
Regardless of whether $t=1$ or $2$, none of the rising factorials in the denominator
contain a multiple of $p$. Also, when we switch to $\G_p$-quotients the power of $-1$ introduced
is
$(-1)^{ 8\frac{tp-1}3}=1$ and
observe
$$\left(\frac43\right)_{\frac{tp-1}3} =\frac{\G\left(\frac{tp+3}3\right)}{\G\left(\frac43\right)}
=\frac{\frac{tp}3}{\frac13} \frac{\G\left(\frac{tp}3\right)}{\G\left(\frac13\right)}=
 (-1)^{\frac{tp-1}3} tp\frac{\G_p\left(\frac{tp}3\right)}{\G_p\left(\frac13\right)}.$$

When $t=1$, that is $p \equiv 1 \mod 6$, the remaining rising factorials in the numerator contain no multiples
of $p$ and, when viewed as
$\Gamma$-quotients can be replaced directly by $\G_p$-quotients.

When $t=2$, that is $p \equiv 5 \mod 6$, the remaining rising factorials
in the numerator each contain exactly one multiple of $p$,
namely $\displaystyle \frac{2+*p}3+ \frac{p-2}3$. In this case, when replacing
the rising factorials by $\G_p$-quotients we have to include the factor
$$
\frac{p}3 (1+2\zeta_5 +2\zeta_5^2) \frac{p}3 (1+2\zeta_5 +2\zeta_5^3)  \frac{p}3 (1+2\zeta_5^2 +2\zeta_5^3)
= \frac{5p^3}{27}.
$$

Set $A_1=1$ and $A_2 = \displaystyle \frac{5p^3}{27}$.
Then~\eqref{eq:tp} becomes
\begin{multline}t
pA_t\frac{ \G_p\left(\frac{tp}3 \right)}{\G_p(\frac13)}
\frac{ \G_p\left( \frac{1+\zeta_5 tp+\zeta_5^2tp+tp}3\right)}{\G_p\left( \frac{2+\zeta_5 t p+\zeta_5^2tp}3\right)}
\frac{ \G_p\left( \frac{1+\zeta_5 tp+\zeta_5^3tp+tp}3\right)}{\G_p\left( \frac{2+\zeta_5 tp+\zeta_5^3tp}3\right)}
\frac{ \G_p\left( \frac{1+\zeta_5^2t p+\zeta_5^3tp+tp}3\right)}{\G_p\left( \frac{2+\zeta_5^2t p+\zeta_5^3tp}3\right)}\\
\times
\frac{ \G_p\left(1+\frac{\zeta_5 tp}3\right)}{ \G_p\left(\frac{2+\zeta_5 tp+tp}3\right)}
\frac{ \G_p\left(1+\frac{\zeta_5^2 tp}3\right)}{ \G_p\left(\frac{2+\zeta_5^2 tp+tp}3\right)}
\frac{ \G_p\left(1+\frac{\zeta_5^3 tp}3\right)}{ \G_p\left(\frac{2+\zeta_5^3 tp+tp}3\right)}
\frac{ \G_p\left(\frac{1+\zeta_5 tp+\zeta_5^2 tp +\zeta_5^3 tp }3\right)}
{ \G_p\left(\frac{\zeta_5 tp+\zeta_5^2 tp+\zeta_5^3 tp+tp}3\right)}\\
\end{multline}
As $1+\zeta_5 +\zeta_5^2 + \zeta_5^3  +\zeta_5^4 =0$, we have
$$\displaystyle\G_p\left(\frac{\zeta_5 tp+\zeta_5^2 tp+\zeta_5^3 tp+tp}3\right) =\G_p\left(-\frac{\zeta_5^4 tp}3\right)=(-1)^{a_0(0)}
\G_p\left(1+\frac{\zeta_5^4 tp}3\right)^{-1}$$
by the functional equation (see Proposition \ref{prop:valLB}). We apply the functional equation to the rest
of the denominator terms to get (grouping the $\G_p(1+*)$ terms at the end)
\begin{multline}(-1)^{p+6a_0(\frac23)}
tpA_t\frac{ \G_p\left(\frac{tp}3 \right)}{\G_p(\frac13)}
 \G_p\left( \frac{1+\zeta_5 t p+\zeta_5^2tp+tp}3\right)\G_p\left( \frac{1-\zeta_5 tp-\zeta_5^2tp}3\right)
 \G_p\left( \frac{1+\zeta_5 tp+\zeta_5^3tp+tp}3\right) \\
\times \G_p\left( \frac{1-\zeta_5 tp-\zeta_5^3tp}3\right) \G_p\left( \frac{1+\zeta_5^2 t p+\zeta_5^3tp+pt}3\right)\G_p\left( \frac{1-\zeta_5^2 tp-\zeta_5^3tp}3\right)
\G_p\left(\frac{1-\zeta_5 tp-tp}3\right)
{ \G_p\left(\frac{1-\zeta_5^2 tp-tp}3\right)}\\
\times
{ \G_p\left(\frac{1-\zeta_5^3 tp-tp}3\right)}
{ \G_p\left(\frac{1+\zeta_5 tp+\zeta_5^2 tp +\zeta_5^3 tp }3\right)}
\G_p\left(1+\frac{\zeta_5 tp}3\right){ \G_p\left(1+\frac{\zeta_5^2 tp}3\right)}
{ \G_p\left(1+\frac{\zeta_5^3 tp}3\right)}{ \G_p\left(1+\frac{\zeta_5^4tp}3\right)}\\
=(-1)^{p+6a_0(\frac23)}tpA_t\frac{ \G_p\left(\frac{tp}3 \right)}{\G_p(\frac13)}
 \G_p\left( \frac{1-\zeta_5^3 t p-\zeta_5^4tp}3\right)\G_p\left( \frac{1-\zeta_5 tp-\zeta_5^2tp}3\right)
 \G_p\left( \frac{1-\zeta_5^2 tp-\zeta_5^4tp}3\right) \\
\times \G_p\left( \frac{1-\zeta_5 tp-\zeta_5^3tp}3\right) \G_p\left( \frac{1-\zeta_5 t p-\zeta_5^4tp}3\right)\G_p\left( \frac{1-\zeta_5^2 tp-\zeta_5^3tp}3\right)
\G_p\left(\frac{1-\zeta_5 tp-tp}3\right)
{ \G_p\left(\frac{1-\zeta_5^2 tp-tp}3\right)}\\
\times
{ \G_p\left(\frac{1-\zeta_5^3 tp-tp}3\right)}
{ \G_p\left(\frac{1- tp-\zeta_5^4 tp }3\right)}
\G_p\left(1+\frac{\zeta_5 tp}3\right){ \G_p\left(1+\frac{\zeta_5^2 tp}3\right)}
{ \G_p\left(1+\frac{\zeta_5^3 tp}3\right)}{ \G_p\left(1+\frac{\zeta_5^4tp}3\right)}
\end{multline}
Note that $p+6a_0\left(\frac23\right)$ is odd and
$\displaystyle\G_p\left(\frac{tp}3\right) =-\G_p\left(1+\frac{tp}3\right)$.
Place this with the four terms at the end. We have symmetry with respect to the $5$th roots
of unity so
$$\G_p\left(1+\frac{\zeta_5 tp}3\right)
\G_p\left(1+\frac{\zeta_5^2 tp}3\right)
\G_p\left(1+\frac{\zeta_5^3 tp}3\right)
\G_p\left(1+\frac{\zeta_5^4tp}3\right)
\G_p\left(1+\frac{tp}3\right)$$
has,
by the $t=4, \,\, p\geq 11$ case of Theorem \ref{truncate},
 Taylor series expansion $\G_p(1)^5[1+O(p^5)]=-1+O(p^5)$.
The overall expression \eqref{eq:tp}  has the symmetry with respect to $5$th roots of unity as
does the remaining part,
\begin{multline} (-1)^{1+1}\frac{ tpA_t}{\G_p(\frac13)}
 \G_p\left( \frac{1-\zeta_5^3 t p-\zeta_5^4tp}3\right)\G_p\left( \frac{1-\zeta_5 tp-\zeta_5^2tp}3\right)
 \G_p\left( \frac{1-\zeta_5^2 tp-\zeta_5^4tp}3\right) \\
\times \G_p\left( \frac{1-\zeta_5 tp-\zeta_5^3tp}3\right) \G_p\left( \frac{1-\zeta_5 t p-\zeta_5^4tp}3\right)\G_p\left( \frac{1-\zeta_5^2 tp-\zeta_5^3tp}3\right)
\G_p\left(\frac{1-\zeta_5 tp-tp}3\right)
{ \G_p\left(\frac{1-\zeta_5^2 tp-tp}3\right)}\\
\times
{ \G_p\left(\frac{1-\zeta_5^3 tp-tp}3\right)}
{ \G_p\left(\frac{1-\zeta_5^4 tp- tp }3\right)}
\end{multline}
Thus above product has Taylor
 series expansion
$\displaystyle tpA_t\G_p\left(\frac13\right)^9[1+O(p^5)]$ as well. Multiplying by $-1+O(p^5)$  we get
\begin{eqnarray*}
\sum^{\frac{p-1}3}_{k=0} (6k+1) \frac{ \left(\frac13\right)^6_k}{(k!)^6}&=&\pFq{7}{6}{\frac13,\frac76,\frac{1}3,\frac{1}3,\frac{1}3,
\frac{1}3,\frac{1}3,}{\frac 16, 1,
1, 1, 1,1}{1}_{\frac{p-1}3}\\
&\equiv &\pFq{7}{6}{\frac13,\frac76,\frac{1- \zeta_5 p}3,\frac{1-\zeta_5^2 p}3,\frac{1-\zeta_5^3 p}3,
\frac{1-\zeta_5^4p}3,\frac{1-p}3,}{\frac 16, 1+\frac{\zeta_5 p}3,
1+\frac{\zeta_5^2 p}3, 1+\frac{\zeta_5^3 p}3, 1+\frac{\zeta_5^4 p}3,1+\frac{p}3}{1}\mod p^6\\
&\equiv &-tpA_t\G_p\left(\frac13\right)^9 \mod p^6.
\end{eqnarray*}
We have proved Theorem \ref{thm:2} for $p \geq 11$. We have  verified the cases $p=5,7$ by hand.
\hfill $\square$
\vskip1em
The next result is obtained by the same method as the previous proof. It uses a formula due to Pfaff-Saalsch\"utz which plays a role in proving
  \eqref{eq:7.1}  from \eqref{eq:whip}.
\begin{prop}\label{morethirds} For any prime $p>3$,
$$\pFq{3}{2}{\frac13,\frac13,\frac13}{1,1}{1}_{p-1}\equiv
\left\{\begin{array}{cll}\G_p\left(\frac13\right)^6& \mod p^3 &(\text{if } p\equiv 1\mod 6)\\
-\frac{p^2}{3}\G_p\left(\frac13\right)^6 & \mod p^3&(\text{if } p\equiv 5\mod 6) \end{array}\right. .$$
\end{prop}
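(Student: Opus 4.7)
The plan is to mirror the proof of Theorem~\ref{thm:2}, using Pfaff-Saalsch\"utz in place of Dougall's formula and cube roots of unity in place of fifth roots.  First, since $(\frac{1}{3})_k$ is a $p$-adic unit for $k \le \frac{tp-1}{3}$ (where $t=1$ or $2$ according as $p\equiv 1$ or $5\mod 6$) and acquires at least one factor of $p$ once $k > \frac{tp-1}{3}$, the cube $(\frac{1}{3})_k^3$ has $p$-adic valuation at least $3$ beyond this threshold, so
\begin{equation*}
\pFq{3}{2}{\frac{1}{3},\frac{1}{3},\frac{1}{3}}{1,1}{1}_{p-1} \equiv \pFq{3}{2}{\frac{1}{3},\frac{1}{3},\frac{1}{3}}{1,1}{1}_{\frac{tp-1}{3}} \mod p^3.
\end{equation*}

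I would next introduce the Galois-symmetric perturbation
\begin{equation*}
F(u) := \sum_{k=0}^{(tp-1)/3} \frac{(\frac{1}{3}-u)_k\,(\frac{1}{3}-\zeta_3 u)_k\,(\frac{1}{3}-\zeta_3^2 u)_k}{(k!)^3},
\end{equation*}
where $\zeta_3$ is a primitive cube root of unity.  The product of the three numerator Pochhammer symbols is invariant under the cyclic permutation $u\mapsto\zeta_3 u$, and arguing exactly as in Lemma~\ref{risingfraction} shows the coefficients of $F(u)$, regarded as a polynomial in $u$, lie in $\Z_p[\zeta_3]$.  Cyclic symmetry together with invariance under $\zeta_3\mapsto\zeta_3^2$ then forces $F(u)\in\Z_p[[u^3]]$, so $F(tp/3)\equiv F(0)\mod p^3$.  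By construction $F(0)$ equals the truncated target sum, while at $u=tp/3$ the first top parameter becomes $\frac{1-tp}{3}\in -\N$ and the series naturally terminates at $(tp-1)/3$.

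At $u=tp/3$ the resulting series
\begin{equation*}
F(tp/3) = \pFq{3}{2}{\frac{1-\zeta_3 tp}{3},\,\frac{1-\zeta_3^2 tp}{3},\,\frac{1-tp}{3}}{1,\,1}{1}
\end{equation*}
is Saalsch\"utzian (the condition $c+d=1+a+b-n$ reduces to $2=2$ via $\zeta_3+\zeta_3^2=-1$), so Pfaff-Saalsch\"utz evaluates it as a closed-form $\Gamma$-quotient that I would convert to a $\G_p$-quotient using Lemma~\ref{GammaP}.  For $p\equiv 1\mod 6$ none of the rising factorials contains a term divisible by $p$ and the conversion is clean.  For $p\equiv 5\mod 6$, both $(\frac{2+2\zeta_3 p}{3})_n$ and $(\frac{2+2\zeta_3^2 p}{3})_n$ contain exactly one factor divisible by $p$, at the index $j=(p-2)/3$, and the variant of part 3) of Lemma~\ref{GammaP} brings out the extra
\begin{equation*}
\frac{p(1+2\zeta_3)}{3}\cdot\frac{p(1+2\zeta_3^2)}{3} = \frac{p^2}{3},
\end{equation*}
which is the source of the $-p^2/3$ in the statement.

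Finally, the reflection formula $\G_p(x)\G_p(1-x)=(-1)^{a_0(x)}$ collapses the resulting expression into a signed multiple of $\bigl[\prod_{j=0}^{2}\G_p\bigl(\frac{1-\zeta_3^j tp}{3}\bigr)\bigr]^2$.  Taylor-expanding each factor via Theorem~\ref{truncate} around $\frac{1}{3}$ and invoking the identities $\sum_{j=0}^{2}\zeta_3^j = \sum_{j=0}^{2}\zeta_3^{2j} = \sum_{0\le j<k\le 2}\zeta_3^{j+k}=0$ forces both the first- and second-order coefficients in $p$ in the expanded product to vanish, giving
\begin{equation*}
\prod_{j=0}^{2}\G_p\!\left(\frac{1-\zeta_3^j tp}{3}\right) \equiv \G_p\!\left(\frac{1}{3}\right)^3 \mod p^3.
\end{equation*}
Squaring and tracking signs (which depend on the parity of $\alpha=a_0(1/3)$, odd when $p\equiv 1\mod 6$ and even when $p\equiv 5\mod 6$) yields both stated cases.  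As in the proof of Theorem~\ref{thm:2}, the main obstacle is the careful sign/case bookkeeping, particularly identifying the combinatorial factor $p^2/3$ cleanly when $p\equiv 5\mod 6$ and verifying the vanishing of the low-order terms in the $\zeta_3$-symmetric Taylor expansion.
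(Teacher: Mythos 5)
Your proposal is correct and follows essentially the same route as the paper: both truncate at $\frac{tp-1}{3}$, pass to the $\zeta_3$-perturbed $_3F_2$ (your $F(u)\in\Z_p[[u^3]]$ argument is just a more explicit version of the paper's ``symmetry with respect to cube roots of unity'' step), evaluate via Pfaff--Saalsch\"utz, extract the factor $\frac{p(1+2\zeta_3)}{3}\cdot\frac{p(1+2\zeta_3^2)}{3}=\frac{p^2}{3}$ when $p\equiv 5\bmod 6$, and finish with the reflection formula plus the vanishing of the elementary symmetric functions of the cube roots of unity in the Taylor expansion of $\prod_j\G_p\bigl(\frac{1-\zeta_3^j tp}{3}\bigr)$. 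All the key computations, including the sign bookkeeping via the parity of $a_0$, match the paper's proof.
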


\begin{proof} The Pfaff-Saalsch\"utz Theorem (Theorem 2.2.6 of \cite{AAR}) says for $n\in \mathbb N$
\begin{equation}\label{eq:4.12}
\pFq{3}{2}{-n,a,b}{c,1+a+b-c-n}{1}=\frac{(c-a)_n(c-b)_n}{(c)_n(c-a-b)_n}\,.
\end{equation}
As in the proof of Theorem~\ref{thm:2}, set $t=1$ or $2$ as $p \equiv 1$ or $5 \mod 6$.
Letting $\displaystyle n=\frac{tp-1}3,a=\frac{1-\zeta_3 tp}3$ and
$\displaystyle b=\frac{1-\zeta_3^2 tp}3$, where $\zeta_3$ is a primitive cube root of unity, we get
\begin{equation}\label{eq:zetathreetp}
\pFq{3}{2}{\frac{1-tp}3,\frac{1-\zeta_3 tp}3,\frac{1-\zeta^2_3 tp}3}{1,1}{1}=\frac{\left(\frac{2+\zeta_3tp}3\right)_{\frac{tp-1}3}\left(\frac{2+\zeta_3^2tp}3\right)_{\frac{tp-1}3 }}{(1)_{\frac{tp-1}3}\left(\frac{1- tp}3\right)_{\frac{tp-1}3}}.
\end{equation}
By the symmetry with respect to cube roots of unity
the left side, a finite sum,  agrees with
$\pFq{3}{2}{\frac{1}3,\frac{1}3,\frac{1}3}{1,1}{1}_{\frac{tp-1}3}$ mod $p^3$ and hence with
$\pFq{3}{2}{\frac{1}3,\frac{1}3,\frac{1}3}{1,1}{1}_{p-1}$
as the terms past $k=\displaystyle
\frac{tp-1}3$ in this series are all divisible by $p^3$.

When $p\equiv 1\mod 6$ ($t=1$) the rising factorials on the right side of \eqref{eq:zetathreetp} are all units in $\Z_p$. When $p\equiv 5\mod p$ ($t=2$), $\displaystyle
\left(\frac{2+\zeta_3tp}3\right)_{\frac{tp-1}3}$ and $\displaystyle
\left(\frac{2+\zeta_3^2tp}3\right)_{\frac{tp-1}3}$ contain, respectively, the multiples of $p$
$\displaystyle \frac{2+\zeta_3tp}3+\frac{p-2}3=\frac{2\zeta_3 +1}3p$ and $\displaystyle \frac{2+\zeta_3^2tp}3+\frac{p-2}3=\frac{2\zeta_3^2 +1}3p$.
Set $B_1=1$ and $B_2=\displaystyle
\frac{2\zeta_3+1}3p \cdot \frac{2\zeta_3^2+1}3p=\frac {p^2}3$.

We rewrite the right side of \eqref{eq:zetathreetp} using $\G_p$-quotients to  get
(there are four factors of $(-1)^{\frac{tp-1}3}$ which cancel)
\begin{multline}B_t \cdot \frac{\G_p\left(\frac{1+tp+\zeta_3tp}3\right)
 \G_p\left(\frac{1+tp+\zeta_3^2tp}3\right)\G_p(1)\G_p\left(\frac{1-tp}3\right) }
{\G_p\left(\frac{2+\zeta_3 tp}3\right) \G_p\left(\frac{2+\zeta_3^2 tp}3\right)
\G_p\left(\frac{2+ tp}3\right)  \G_p(0) }
=-B_t \cdot \frac{\G_p\left(\frac{1-\zeta_3^2tp}3\right) \G_p\left(\frac{1-\zeta_3tp}3\right)
\G_p\left(\frac{1-tp}3\right) }
{\G_p\left(\frac{2+\zeta_3 tp}3\right) \G_p\left(\frac{2+\zeta_3^2 tp}3\right)
\G_p\left(\frac{2+ tp}3\right)   }\\
=B_t (-1)^{1+3a_0(2/3) }\G_p\left(\frac{1-\zeta_3tp}3\right)^2 \G_p\left(\frac{1-\zeta_3^2tp}3\right)^2  \G_p\left(\frac{1-tp}3\right) ^2=
(-1)^{t+1}B_t\left(\G_p\left(\frac 13\right)^6 +O(p^3)\right)
\end{multline}
where the first equality uses that
$1+\zeta_3+\zeta_3^2=0$,  the second that
$\G_p\left(\frac 13+pb\right)\G_p\left(\frac 23-pb\right)
=(-1)^{a_0(2/3)}$ and the third  uses the symmetry with respect to the
cube roots of unity and that
$\displaystyle \frac23 \equiv \frac{p+2}3 \equiv 2\cdot\frac{p+1}3\mod p$
where $\displaystyle \frac{p+2}3 \in \N$ when $ p \equiv 1 \mod 6$ and
$\displaystyle 2\cdot\frac{p+1}3 \in \N$ when $ p \equiv 5 \mod 6$. Thus
$(-1)^{a_0(2/3)} = (-1)^{t}$.
\end{proof}

\section{The proof of Theorem \ref{thm:3} and Other applications}\label{ss:other}
We prove Theorem \ref{thm:3}.   We separate the proof  into the cases  where $p \equiv 3$ mod $4$
and $p \equiv 1$ mod $4$. For $p \equiv 3$ mod $4$ we write our $_3F_2$ as the constant term of a Taylor expansion of a perturbed
$_3F_2$ - actually we write the constant term in terms of the rest of the series and the perturbed
$_3F_2$.  We truncate the series to get a congruence
and using Propositions \ref{3f2series} and \ref{3f2gamma}
convert this congruence to one with $\G_p$-values, proving the result. The strategy for $p \equiv 1$ mod $4$ is
similar, but the computations are somewhat different.

In this section, we will consider supercongruences occurring for $\pFq{3}{2}{\frac12,\frac12,\frac12}{1,1}{\l}_{p-1}$
at values of $\l$ related to CM elliptic curves over $\Q$.
A brief description of the geometric background was given in the introduction.

\begin{lemma} \label{expansion}Let  $k \in {\mathbb N}$ with $k\le \displaystyle \frac{p-1}2$ and $M \in \Z$.
Set \begin{equation}\label{eq:AkBk}
     \displaystyle A_k =\sum^{k-1}_{j=0} \frac{1}{2j+1},\quad B_k=
\displaystyle \sum_{0\le i<j\le k-1}\frac1{(2i+1)(2j+1)}.
    \end{equation}
Note $A_k$ is defined for $k \geq 1$ and
$B_k$ is defined
for $k \geq 2$.
Then
$$\left(\frac{1-Mp}2\right)_k \equiv \left(\frac12\right)_k (1 - MpA_k+(Mp)^2B_k) \mod p^3$$
and
$$\left(1-Mp\right)_k \equiv (1)_k
(1 - MpE_k+(Mp)^2F_k) \mod p^3.$$
 There are similar
expressions for $E_k$ and $F_k$ that we will not need so we do not write them down.
\end{lemma}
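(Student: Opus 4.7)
The plan is to prove both congruences by a direct expansion of the defining products of the Pochhammer symbols, factoring out the unperturbed Pochhammer symbol and tracking the perturbation $Mp$ to second order.

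First I would write
\[
\left(\frac{1-Mp}{2}\right)_k = \prod_{j=0}^{k-1}\left(\frac{1-Mp}{2}+j\right) = \prod_{j=0}^{k-1}\frac{(2j+1)-Mp}{2} = \left(\frac{1}{2}\right)_k \prod_{j=0}^{k-1}\left(1-\frac{Mp}{2j+1}\right),
\]
and similarly
\[
(1-Mp)_k = \prod_{j=0}^{k-1}(j+1-Mp) = (1)_k \prod_{j=0}^{k-1}\left(1-\frac{Mp}{j+1}\right).
\]
The essential point, which uses the hypothesis $k\leq \frac{p-1}{2}$, is that the denominators $2j+1$ (for $0\le j\le k-1$) are odd and bounded by $p-2$, and similarly $j+1\le k\le \frac{p-1}{2}<p$. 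Hence every denominator appearing is a unit in $\Z_p$, so each factor $1-\frac{Mp}{2j+1}$ or $1-\frac{Mp}{j+1}$ lies in $1+p\Z_p$.

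Next I would simply expand the finite product. Writing $x_j = \frac{M}{2j+1}$, the elementary symmetric function expansion gives
\[
\prod_{j=0}^{k-1}(1-x_j p) = 1 - p\sum_{j} x_j + p^2\sum_{i<j} x_i x_j - p^3\sum_{i<j<\ell} x_i x_j x_\ell + \cdots
\]
The sums from the cubic term onward are in $\Z_p$ because each $x_j\in\Z_p$, so those contributions are in $p^3\Z_p$. Substituting back yields exactly
\[
\left(\frac{1-Mp}{2}\right)_k \equiv \left(\frac{1}{2}\right)_k\bigl(1 - MpA_k + (Mp)^2 B_k\bigr) \pmod{p^3},
\]
and the identical argument with $x_j = \frac{M}{j+1}$ yields the second congruence with
\[
E_k = \sum_{j=0}^{k-1}\frac{1}{j+1}, \qquad F_k = \sum_{0\le i<j\le k-1}\frac{1}{(i+1)(j+1)}.
\]

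There is no real obstacle here; the only thing to verify carefully is that the range of $k$ guarantees the denominators in $A_k,B_k,E_k,F_k$ are $p$-units, so truncating the symmetric function expansion at second order genuinely gives a $\bmod\ p^3$ congruence. This reduces the entire lemma to the binomial-style identity for the first three terms of $\prod(1-x_j p)$.
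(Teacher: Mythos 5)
Your proposal is correct and is essentially the paper's own argument: factor out $\left(\frac12\right)_k$ (resp.\ $(1)_k$), expand the product of the perturbation factors via elementary symmetric functions, and observe that the bound $k\le\frac{p-1}2$ keeps every denominator a $p$-adic unit so the terms of order $p^3$ and higher lie in $p^3\Z_p$. No further comment is needed.
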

\begin{proof}
Observe
$$
\left (\frac{1-Mp}2 \right )_k
=\left(\frac12-\frac{Mp}2\right)\left(\frac32-\frac{Mp}2\right)
\dotsb\left(\frac{2k-1}2-\frac{Mp}2\right).$$
Multiplying this out gives an expansion of the form
$$\left(\frac12\right)_k\left( 1-Mp\sum_{j=0}^{k-1}\frac1{2j+1}+(Mp)^2\sum_{0\le i<j\le
k-1}^{k-1}\frac1{(2i+1)(2j+1)}\right) +O(p^3).$$
The big `$O$' term is justified as no multiples of $p$ occur in any of the denominators
of any of the sums as all indices are at most
$\displaystyle \frac{p-3}2$.
The proof of the  $(1-Mp)_k$ result is identical.
\end{proof}

\begin{prop} \label{3f2series}
Set  $A_k$ and $B_k$ as in \eqref{eq:AkBk}.
 Set $_3F_2(C) =
\,\pFq{3}{2}{\frac{1-Cp}2,\frac{1+(C-2)p}2,\frac{1-p}2}{1-p,1-\frac{p}2}{1}.$
Then $$_3F_2(C)- \, _3F_2(D) \equiv \\ p^2
\sum^{\frac{p-1}2}_{k=0} \frac{ \left(\frac12\right)^3_k}{(k!)^3}
\left[ (2C^2-2D^2-4C+4D)B_k +(-C^2+D^2+2C-2D)A^2_k\right] \mod p^3.$$
\end{prop}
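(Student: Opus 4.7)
The plan is a direct computation: expand every Pochhammer symbol appearing in ${}_3F_2(C)$ and ${}_3F_2(D)$ modulo $p^3$, subtract termwise, and verify that exactly the stated polynomial in $C,D$ remains. The key observation is that only two of the five Pochhammer factors in ${}_3F_2(C)$ actually depend on $C$, namely $\left(\tfrac{1-Cp}{2}\right)_k$ and $\left(\tfrac{1+(C-2)p}{2}\right)_k$; the factors $\left(\tfrac{1-p}{2}\right)_k$, $(1-p)_k$, $(1-\tfrac{p}{2})_k$, and $k!$ contribute a common prefactor to both ${}_3F_2(C)$ and ${}_3F_2(D)$ and therefore do not need to be expanded to high precision.

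First I would apply Lemma~\ref{expansion} with $M=C$ and $M=2-C$ to write, modulo $p^3$,
\[
\left(\tfrac{1-Cp}{2}\right)_k \left(\tfrac{1+(C-2)p}{2}\right)_k \equiv \left(\tfrac12\right)_k^{\!2}\bigl[\,1 - 2pA_k + (2C^2-4C+4)\,p^2 B_k + (2C-C^2)\,p^2 A_k^2\,\bigr],
\]
the cross term $(CpA_k)((2-C)pA_k)=C(2-C)p^2A_k^2$ being the source of the $A_k^2$ contribution. Writing the corresponding expansion with $C$ replaced by $D$ and subtracting, the $1$'s and the $-2pA_k$ terms cancel (this is the whole point of the perturbation: $C+(C-2)$ gives the $C$-independent combination $-2pA_k$), leaving
\[
(2C^2-2D^2-4C+4D)\,p^2B_k + (-C^2+D^2+2C-2D)\,p^2A_k^2 \pmod{p^3}.
\]
Since this difference is already $O(p^2)$, the remaining prefactor $\left(\tfrac{1-p}{2}\right)_k / \bigl((1-p)_k(1-\tfrac{p}{2})_k\,k!\bigr)$ only needs to be known modulo $p$. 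A term-by-term expansion as in Lemma~\ref{expansion} (with the half-integer shift $p/2$ causing no trouble, since $p\equiv 0\pmod p$ and $2$ is a unit in $\Z_p$) gives $(1-p)_k\equiv k!$, $(1-\tfrac{p}{2})_k\equiv k!$, and $\left(\tfrac{1-p}{2}\right)_k\equiv\left(\tfrac12\right)_k$ modulo $p$. Combining this with the already-extracted $\left(\tfrac12\right)_k^2$ produces exactly the prefactor $\left(\tfrac12\right)_k^{3}/(k!)^3$ in the proposition.

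The sum naturally truncates at $k=(p-1)/2$ because the factor $\left(\tfrac{1-p}{2}\right)_k$, present in both ${}_3F_2(C)$ and ${}_3F_2(D)$, vanishes for $k>(p-1)/2$. There is no conceptual obstacle here beyond bookkeeping; the entire content of the proposition is the cancelation of the linear-in-$p$ contribution in the numerator expansion, and the asymmetric choice of perturbation $\bigl(\tfrac{1-Cp}{2},\tfrac{1+(C-2)p}{2}\bigr)$ is made precisely so that this cancelation occurs.
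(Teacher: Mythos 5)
Your proof is correct and follows essentially the same route as the paper: expand the Pochhammer symbols via Lemma~\ref{expansion}, multiply out modulo $p^3$, and observe that everything independent of $C$ cancels in the subtraction. Your version is slightly more economical in that you note up front that only the two $C$-dependent factors need second-order expansion while the common prefactor only matters modulo $p$ (so the $E_k,F_k$ expansions of the denominator factors, which the paper carries along and then cancels, are never needed), but the underlying computation is identical.
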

\begin{proof}
We apply Lemma~\ref{expansion} for $\displaystyle \frac12$ (resp. $1$)
with $M=C, 2-C$ and $1$ (resp. $M=1$ and $\displaystyle \frac12$).
So
\begin{eqnarray*}\label{eq:5.2}
_3F_2(C) &=&\,\pFq{3}{2}{\frac{1-Cp}2,\frac{1+(C-2)p}2,\frac{1-p}2}{1-p,1-\frac{p}2}{1}
=\sum^{\frac{p-1}2}_{k=0} \frac{ \left(\frac{1-Cp}2\right)_k  \left(\frac{1+(C-2)p}2\right)_k\left(\frac{1-p}2\right)_k}
{(1)_k\left(1-p\right)_k\left(1-\frac{p}2\right)_k}\\
&\equiv& \sum^{\frac{p-1}2}_{k=0} \frac{ \left(\frac12\right)^3_k
[1-CpA_k+C^2p^2B_k][1+(C-2)pA_k+(C-2)^2p^2B_k][1-pA_k+p^2B_k]}{(k!)^3
[1-pE_k+p^2F_k][1-\frac{p}2E_k+\frac{p^2}{4}F_k]}\\
&\equiv&
\sum^{\frac{p-1}2}_{k=0} \frac{ \left(\frac12\right)^3_k}{(k!)^3}
\left[1-3pA_k+p^2\left[(2C^2-4C+5)B_k+(-C^2+2C+2)A^2_k\right]\right]\times \\
&&
\left[1-\frac32 pE_k+\frac54p^2F_k
+\frac{p^2}2E^2_k\right]^{-1} \\
&\equiv&
\sum^{\frac{p-1}2}_{k=0} \frac{ \left(\frac12\right)^3_k}{(k!)^3}
\left[1+p\left[-3A_k+\frac32 E_k\right] + p^2 X\right] \mod p^3.
\end{eqnarray*} where $X=\left[(2C^2-4C+5)B_k +(-C^2+2C+2)A^2_k
 -\frac54 F_k +\frac74 E^2_k
-\frac92 A_kE_k\right]$.
The result follows by simply performing the subtraction $_3F_2(C) - \, _3F_2(D)$
as most terms are independent of $C$ and $D$ and therefore cancel.
\end{proof}

\begin{prop} \label{3f2gamma}
 Let $C,D>0$.

For $C\equiv D \equiv p \equiv 1 \mod 4$,
\begin{multline}_3F_2 ( C ) - \, _3F_2(D) \equiv  \left[-\frac{p^2}{16}\G_p\left(\frac14\right)^4\right] \\
{\bf \cdot}
\left[  (2C^2-2D^2-4C+4D)  G_2\left( \frac14\right)  +    (-2C^2+2D^2+4C-4D) G_1\left(\frac14\right)^2\right]
\mod p^3.
\end{multline}
Similarly,

when $C\equiv D \equiv p\equiv 3\mod 4$,
$$_3F_2 ( C ) - \, _3F_2(D) \equiv \frac{(C^2-D^2-2C+2D)}{16}p^2\G_p\left(\frac 14\right)^4 \mod p^4.$$

\end{prop}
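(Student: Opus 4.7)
The plan is to evaluate $_3F_2(C)$ in closed form via Watson's well-poised $_3F_2$ summation, convert the resulting $\Gamma$-quotient into a $\Gamma_p$-quotient, Taylor-expand each factor via Theorem~\ref{truncate}, and subtract. Proposition~\ref{3f2series} is not needed here; it serves instead as an independent check on the final answer.

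First I verify that Watson's theorem applies to $_3F_2(C)$: setting $a=\tfrac{1-Cp}{2}$, $b=\tfrac{1+(C-2)p}{2}$, $c=\tfrac{1-p}{2}$, one has $a+b=1-p$ so $\tfrac{a+b+1}{2}=1-\tfrac{p}{2}$, and $2c=1-p$, which are exactly the two bottom parameters of $_3F_2(C)$. Watson's formula then yields
\begin{equation*}
{}_3F_2(C) \;=\; \frac{\pi\,\Gamma\!\left(1-\tfrac{p}{2}\right)^{2}}{\Gamma\!\left(\tfrac{3-Cp}{4}\right)^{2}\,\Gamma\!\left(\tfrac{3+(C-2)p}{4}\right)^{2}}.
\end{equation*}
The prefactor $\pi\,\Gamma(1-p/2)^{2}$ is independent of $C$, so it cancels in ${}_3F_2(C)-{}_3F_2(D)$, and only the two $C$-dependent $\Gamma$'s matter. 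Moreover, the substitution $C\leftrightarrow 2-C$ swaps the two $\Gamma$-arguments, so ${}_3F_2(C)={}_3F_2(2-C)$ and ${}_3F_2(C)-{}_3F_2(D)$ is a function of $C(2-C)-D(2-D)$ only, which explains the symmetric form of the $(C,D)$-polynomial in the proposition.

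Next I convert the denominator to $\Gamma_p$-values at arguments close to $\tfrac14$ and $\tfrac34$. Euler's reflection \eqref{eq:Eulerrefl} rewrites $\Gamma(\tfrac{3-Cp}{4})$ in terms of $1/\Gamma(\tfrac{1+Cp}{4})$, whose argument is $p$-adically near $\tfrac14$; similarly for the second factor. Following this with a finite number of integer shifts governed by the variant of Lemma~\ref{GammaP}(3) brings each $\Gamma$ into $\Gamma_p(\tfrac14+\delta)$ or $\Gamma_p(\tfrac34+\delta)$ with $\delta=O(p)$, up to explicit sign and $p$-power factors. Here the two mod-$4$ cases split: for $p\equiv 1\bmod 4$ no intermediate rising-factorial term is a multiple of $p$, and the conversion produces only signs, while for $p\equiv 3\bmod 4$ a single $p$-crossing appears in the intermediate shift, producing an extra factor of $p^2/16$ up to a unit, which is the origin of the prefactor in the second branch.

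Finally I apply Theorem~\ref{truncate} with $t=2$ to expand each $\Gamma_p$-value to second order in $\delta$, and invoke Corollary~\ref{reflect} to identify $G_1(\tfrac34)=G_1(\tfrac14)$ and $G_2(\tfrac34)=2G_1(\tfrac14)^{2}-G_2(\tfrac14)$, so everything is expressed in terms of $\Gamma_p(\tfrac14)$, $G_1(\tfrac14)$, and $G_2(\tfrac14)$. On forming ${}_3F_2(C)-{}_3F_2(D)$ the constant terms cancel, the $p$-linear pieces vanish by the $C\mapsto 2-C$ symmetry above, and the surviving $p^{2}$ contribution assembles into the stated combination of $G_2(\tfrac14)$ and $G_1(\tfrac14)^{2}$ in the $p\equiv 1\bmod 4$ case; in the $p\equiv 3\bmod 4$ case the extra $p^{2}/16$ prefactor from the previous step absorbs the $G_i$ information and only the claimed polynomial in $C,D$ remains, giving the mod $p^{4}$ congruence. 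The main obstacle is the careful $\Gamma\to\Gamma_p$ bookkeeping: tracking the sine-induced signs from reflection and the $p$-power contributions of the $p$-multiples in the intermediate rising factorials is precisely what separates the two branches and fixes both the sign and the leading $p$-order on the right-hand side. Everything else is mechanical Taylor expansion and algebra.
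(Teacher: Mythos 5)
Your route is essentially the paper's, packaged slightly differently at the first step. The closed form you extract from Watson's theorem,
$$ {}_3F_2(C)=\frac{\pi\,\G\left(1-\tfrac p2\right)^2}{\G\left(\tfrac{3-Cp}4\right)^2\,\G\left(\tfrac{3+(C-2)p}4\right)^2}, $$
is literally the same $\G$-quotient the paper obtains by applying Clausen's formula (the parameters are simultaneously of Clausen and of Watson type) followed by Gauss summation of the resulting ${}_2F_1$; from there the $\G\to\G_p$ conversion, the case split according to whether the interval from $\tfrac{3-Cp}4$ to $1-\tfrac p2$ contains one more multiple of $p$ than the interval from $\tfrac12$ to $\tfrac{3+(C-2)p}4$ (it does exactly when $p\equiv 3\bmod 4$, contributing the prefactor $\tfrac{(C-1)^2p^2}{16}$ --- note this factor is $C$-dependent, not merely ``$p^2/16$ up to a unit,'' and its $C$-dependence is precisely the source of the polynomial $(C-1)^2-(D-1)^2=C^2-D^2-2C+2D$), and the order-two Taylor expansion via Theorem~\ref{truncate} with Corollary~\ref{reflect} are the same steps the paper performs. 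The Clausen detour buys one convenience: everything is squared, so the signs in the $\G\to\G_p$ conversion need no tracking. Two points to tighten. First, you should invoke the terminating form of Watson's theorem (the series stops at $k=\tfrac{p-1}2$), since the classical convergence hypothesis is not the relevant one; the paper is careful about exactly this kind of issue for the identities it does use. Second, your argument for the $p\equiv 3\bmod 4$ branch only yields a congruence mod $p^3$, not $p^4$: the $\G_p$-quotient multiplying $\tfrac{(C-1)^2p^2}{16}$ has linear term $\bigl(G_1(\tfrac14)-G_1(0)\bigr)p$, which is independent of $C$ but does not cancel in the difference ${}_3F_2(C)-{}_3F_2(D)$, so ``the prefactor absorbs the $G_i$ information'' does not by itself upgrade the modulus. (The paper's own proof has the same limitation --- it concludes mod $p^3$ despite the statement --- and only the mod $p^3$ version is used in the proof of Theorem~\ref{thm:3}, so this is an inherited defect rather than one you introduced.)
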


\begin{proof} By Clausen's formula \eqref{eq:Clausen1}, which holds as long as both sides converge and the Gauss summation  formula \eqref{eq:Gauss} which applies  as
$\displaystyle\Re\left(1-\frac{p}2-\frac{1-Cp}4-\frac{1+(C-2)p}4)\right)=\Re\left(\frac 12
\right)=\frac 12>0$,
\begin{equation}\label{eq:5.5}\displaystyle\pFq{3}{2}{ \frac{1-Cp}2, \frac{1+(C-2)p}2, \frac{1-p}2}{1-p,1-\frac{p}2}{1}=
\, \pFq{2}{1}{ \frac{1-Cp}4, \frac{1+(C-2)p}4}{1-\frac{p}2}{1}^2 =
\frac{   \G\left( 1-\frac{p}2\right)^2 \G\left(\frac12\right)^2}
{\G\left( \frac{ 3+ (C-2)p}4\right)^2 \G\left(\frac{3-Cp}4\right)^2}.\end{equation}
In both pairs $\displaystyle\left(\frac12,\frac{3+(C-2)p}4\right)$ and
$\displaystyle\left(\frac{3-Cp}4, 1-\frac{p}2\right)$
the elements differ by an integer.  When $p\equiv 1\mod 4$, write $C=4k+1$. The  numbers between both
elements of the first pair that differ from each by an integer {\em and} are multiples
of $p$ are $\displaystyle\frac{2p}4, \frac{6p}4, \dots, \frac{4k-2}4 p$. For the second
pair the corresponding list is simply the negatives of the numbers above.
As in the variant of 3) of Lemma~\ref{GammaP}, when
we want to replace a quotient of $\G$-functions by the same quotient
of $\G_p$-functions, care must taken with the signs and multiples of $p$.
Here everything will be squared so we need not worry about signs, and the
multiples of $p$ of concern end up canceling in the numerator and denominator so we
may simply replace all $\G$-functions by $\G_p$.
Thus
$$\pFq{3}{2}{ \frac{1-Cp}2, \frac{1+(C-2)p}2, \frac{1-p}2}{1-p,1-\frac{p}2}{1}=
\frac{   \G\left( 1-\frac{p}2\right)^2 \G\left(\frac12\right)^2}
{\G\left( \frac{ 3+ (C-2)p}4\right)^2 \G\left(\frac{3-Cp}4\right)^2}
=
\frac{   \G_p\left( 1-\frac{p}2\right)^2 \G_p\left(\frac12\right)^2}
{\G_p\left( \frac{ 3+ (C-2)p}4\right)^2 \G_p\left(\frac{3-Cp}4\right)^2}$$
which by arguments which are now standard in this paper becomes
\begin{multline}
 \G_p\left(1-\frac{p}2\right)^2 \G_p\left(\frac12\right)^2
\G_p\left(\frac{1+(2-C)p}4\right)^2 \G_p\left( \frac{1+Cp}4\right)^2
\equiv \left[-\G_p\left(1-\frac{p}2\right)^2\G_p\left(\frac14\right)^4 \right]\\
{\bf \cdot} \left[ 1+G_1\left(\frac14\right) \left(\frac{2-C}4p\right) +\frac{G_2\left(\frac14\right)}2
\left(\frac{2-C}4 p\right)^2\right]^2
\left[ 1+G_1\left(\frac14\right) \left(\frac{C}4 p \right)\frac{G_2\left(\frac14\right)}2
\left(\frac{C}4 p\right)^2\right]^2 \\  \mod p^3.
\end{multline}
This simplifies to
$$-\G_p\left(1-\frac{p}2\right)^2\G_p\left(\frac14\right)^4
\left[ 1+pG_1\left(\frac14\right) +\frac{p^2}{16}Y\right]$$where $Y=\left[ (2C^2-4C+4)G_2\left(\frac14\right)
+(-2C^2+4C+4)G_1\left(\frac14\right)^2\right]$.
So
\begin{equation} _3F_2 ( C) - \, _3F_2(D) \equiv
\left(-\G_p\left(1-\frac{p}2\right)^2\G_p\left(\frac14\right)^4
\frac{p^2}{16} \right)  {\bf \cdot}
Y \mod p^3.
\end{equation}
By a simple application of Theorem~\ref{truncate} we see
$\displaystyle\G_p\left(1-\frac{p}2\right) \equiv \G_p(1)  \equiv -1 \mod p$.
The result follows for $p \equiv 1\mod 4$.

When $p\equiv 3 \mod 4$,  write $C=4k+3$, then within $\displaystyle\left (\frac 12,\frac{3+(C-2)p}4\right )$ the multiples of $p$ are $\displaystyle\frac {2p}4,\cdots, \frac{4k-2}4p$. The multiples of $p$  within
$\displaystyle\left (\frac{3-Cp}4,1-\frac p2 \right )$ include (up to sign) those just listed and one more,  $\displaystyle-\frac{(4k+2)p}4=-\frac{(C-1)p}4$. Thus
$$_3F_2(C)=\frac{(C-1)^2p^2}{4^2}\frac{   \G_p\left( 1-\frac{p}2\right)^2 \G_p\left(\frac12\right)^2}
{\G_p\left( \frac{ 3+ (C-2)p}4\right)^2 \G_p\left(\frac{3-Cp}4\right)^2}\equiv
\frac{(C-1)^2p^2\G_p\left(\frac 14\right)^4}{4^2} \mod p^3.$$ It follows that
$\displaystyle
_3F_2(C)-\,_3F_2(D)= \frac{(C-D)(C+D-2)p^2}{16}\G_p\left(\frac 14\right)^4 \mod p^3.$

\end{proof}

We are now ready to prove Theorem \ref{thm:3}.
Proposition \ref{3f2series} gives a formula for $_3F_2(C)-\, _3F_2(D)$ in terms of sums involving $A_k$ and $B_k$.
Proposition \ref{3f2gamma} uses Clausen's formula to change this to a $\G_p$-quotient. The  expansion \eqref{eq:3f2taylor}
 comes from Lemma \ref{expansion} and expresses our desired quantity in terms of a perturbed $_3F_2$ and an expression involving $A_k$ and $B_k$ that we have computed
as a $\G_p$-quotient. When $p \equiv 3$ mod $4$, the perturbed term is $0$ and we are done. For $p \equiv 1$ mod $4$ our standard approach applies: the perturbed term is reduced to a $\G$-quotient by use of Clausen's formula and
Gauss' summation formula. It is then changed to  a $\G_p$-quotient by Lemma \ref{GammaP}.

\begin{proof}[Proof of Theorem \ref{thm:3}]
We first handle the $p\equiv 3\mod 4$ case.
We need a vanishing result in this case.
Using the first equation on \cite[pp 142]{AAR} which holds when $n$ is a positive integer so  both sides converge
\begin{equation}\label{eq:57}
\pFq{3}{2}{-n,a,b}{d,e}{1}=\frac{(e-a)_n}{(e)_n}\pFq{3}{2}{-n,a,d-b}{d,a+1-n-e}{1}
\end{equation} we know when  $p\equiv 3\mod 4$,
\begin{equation}
\pFq{3}{2}{\frac{1-p}2,\frac{1+p}2,\frac 12}{1,1}{1}=(-1)^{\frac{p-1}2}\pFq{3}{2}{\frac{1-p}2,\frac{1+p}2,\frac 12}{1,1}{1}.
\end{equation} Thus $\pFq{3}{2}{\frac{1-p}2,\frac{1+p}2,\frac 12}{1,1}{1}=0$ in this case.

By Lemma~\ref{expansion}
\begin{equation}\label{eq:3f2taylor}
\pFq{3}{2}{\frac12,\frac{1}2,\frac{1}2}{1,1}{1} _{\frac{p-1}2}=
\pFq{3}{2}{\frac12,\frac{1-p}2,\frac{1+p}2}{1,1}{1} + p^2\sum_{k=0}^{\frac{p-1}2} \frac{\left(\frac 12\right)^3}{k!^3}[A_k^2-2B_k]\mod p^3.
\end{equation}
By Propositions~\ref{3f2series} and~\ref{3f2gamma}
 $$_3F_2(3)-\, _3F_2(7)\equiv 32p^2
\sum^{\frac{p-1}2}_{k=0}
\frac{\left(\frac 12\right)^3}{k!^3}[A_k^2-2B_k]\mod p^3$$  and
$$_3F_2(3)-\, _3F_2(7)\equiv -2p^2\G_p\left(\frac 14\right)^4 \mod p^3.$$
Thus $$\pFq{3}{2}{\frac12,\frac{1}2,\frac{1}2}{1,1}{1} _{\frac{p-1}2}\equiv 0+
\sum^{\frac{p-1}2}_{k=0}\frac{\left(\frac 12\right)^3}{k!^3}[A_k^2-2B_k]\equiv
-\frac{p^2}{16} \G_p\left(\frac 14\right)^4 \mod p^3.$$
This settles Theorem \ref{thm:3} for $p \equiv 3 \mod 4$.

Now we assume $p\equiv 1\mod 4$. Observe that by definition
$\displaystyle \pFq{3}{2}{\frac12,\frac12,\frac12}{1,1}{1}_{\frac{p-1}2}=
\sum_{k=0}^{\frac{p-1}2} \left(\frac{(\frac12)_k}{k!} \right)^3.$

Let $C=5$ and $D=9$ in Propositions~\ref{3f2series} and~\ref{3f2gamma}.
Taking care with signs, the differences $_3F_2(C)-_3F_2(D)$ are
\begin{equation}\label{eq:48}
48p^2\sum^{\frac{p-1}2}_{k=0} \left(\frac{ \left(\frac12\right)_k}{k!}\right)^3(2B_k-A^2_k) \equiv
-p^2\G_p\left(\frac14\right)^4 \left[ 6G_1\left(\frac14\right)^2-6G_2\left(\frac14\right)\right] \mod p^3.
\end{equation}

We will compute
$\pFq{3}{2}{\frac{1-p}2,\frac{1+p}2,\frac{1}2}{1,1}{1}$ in two different ways.
Since  $$\displaystyle \left( \frac{1-p}2+r\right) \left(\frac{1+p}2 +r\right) \left(\frac12+r\right)
=\left(\frac12+r\right)^3 - \frac{p^2}4\left(\frac12+r\right)$$
we have, employing the simple identity
$\displaystyle \sum^{k-1}_{j=0} \frac1{(2j+1)^2} = A^2_k-2B_k$,
$$\displaystyle \left( \frac{1-p}2\right)_k \left( \frac{1+p}2\right)_k\left( \frac{1}2\right)_k
\equiv \left( \frac{1}2\right)^3_k\left(1  -p^2 
\sum^{k-1}_{j=0} \frac1{(2j+1)^2} \right) \equiv
\left( \frac{1}2\right)^3_k - p^2\left( \frac{1}2\right)^3_k (A^2_k-2B_k)
\mod p^3.$$ The first congruence below is just \eqref{eq:3f2taylor} while the second follows from
\eqref{eq:48}.
\begin{multline}\label{eq:second3f2}
\pFq{3}{2}{\frac{1-p}2\quad \frac{1+p}2\quad \frac{1}2}{1\quad 1}{1}\equiv
\,\pFq{3}{2}{\frac12\quad \frac12\quad \frac12}{1\quad 1}{1}_{\frac{p-1}2}-
p^2\sum_{k=0}^{\frac{p-1}2} \left (\frac{(\frac12)_k}{k!} \right )^3(A^2_k-2B_k)\\
\equiv \,\pFq{3}{2}{\frac12\quad \frac12\quad \frac12}{1\quad 1}{1}_{\frac{p-1}2}
-p^2\G_p\left(\frac14\right)^4 \left[
\frac{G_1\left(\frac14\right)^2-G_2\left(\frac14\right)}{8}\right] \mod p^3.
\end{multline}

To estimate the left side \eqref{eq:second3f2} we need
$p$-adic Gamma functions.
We use  Clausen's formula, Gauss' summation formula, that $p \equiv 1 \mod 4$
and note the elements of the pair  $\displaystyle\left(\frac{3-p}4,\frac12\right)$ differ by an integer,
as do the elements of   $\displaystyle\left(1,\frac{3+p}4\right)$.
In neither pair are there any  multiples of $p$ between the numbers and an integral distance
from both endpoints, so we can use 3) of Lemma~\ref{GammaP} directly to see
\begin{eqnarray*}\pFq{3}{2}{\frac{1-p}2,\frac{1+p}2,\frac{1}2}{1,1}{1}&=&\,
\pFq{2}{1}{\frac{1-p}4,\frac{1+p}4}{1}{1}^2
=\left[\frac{\G(1)\G\left(\frac12\right)}{\G\left(\frac{3-p}4\right)\G\left(\frac{3+p}4\right)}\right]^2\\
&=&\left[\frac{\G_p(1)\G_p\left(\frac12\right)}{\G_p\left(\frac{3-p}4\right)\G_p\left(\frac{3+p}4\right)}\right]^2
=-\G_p\left(\frac{1+p}4\right)^2\G_p\left(\frac{1-p}4\right)^2\\
&\equiv & -\G_p\left(\frac14\right)^4\left[1+\frac{G_2(\frac14)-G_1(\frac14)^2}{8}p^2\right] \\
&\equiv & -\G_p\left(\frac14\right)^4-p^2\G_p\left(\frac14\right)^4\left[\frac{G_2(\frac14)-G_1(\frac14)^2}{8}\right] \mod p^3.
\end{eqnarray*}

The fourth equality and first congruence involve what are for us standard arguments
with $\G_p$-functions.
Comparing with ~\eqref{eq:second3f2}, the $p \equiv 1$ mod $4$ case follows and
Theorem~\ref{thm:3} is proved.
\end{proof}

We prove one more supercongruence. We will restrict ourselves to the ordinary primes.
\begin{theorem}When $p\equiv 1\mod 4$ is a prime, then
$$\pFq{3}{2}{\frac12,\frac12,\frac12}{1,1}{\frac{-1}8}_{\frac{p-1}2}\equiv\, -(-1)^{\frac{p^2-1}8}\G_p\left(\frac14\right)^4 \mod p^3.$$
\end{theorem}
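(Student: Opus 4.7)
The proof follows the template of Theorem~\ref{thm:3} in the case $p\equiv 1\pmod 4$, adapted to the value $\lambda=-1/8$. First, I would apply Lemma~\ref{expansion} to $(1/2)_k^3$, together with the identity $\sum_{j=0}^{k-1}(2j+1)^{-2}=A_k^2-2B_k$ used there, to reduce the truncated sum to a perturbed hypergeometric plus a correction:
\begin{equation*}
\pFq{3}{2}{\tfrac12,\tfrac12,\tfrac12}{1,1}{-\tfrac18}_{(p-1)/2} \equiv \pFq{3}{2}{\tfrac{1-p}{2},\tfrac{1+p}{2},\tfrac12}{1,1}{-\tfrac18} + p^2 S \pmod{p^3},
\end{equation*}
where $S = \sum_{k=0}^{(p-1)/2} \frac{(1/2)_k^3}{k!^3}(A_k^2 - 2B_k)(-1/8)^k$.

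By Clausen's formula \eqref{eq:Clausen1} the perturbed $_3F_2$ equals $\pFq{2}{1}{(1-p)/4,(1+p)/4}{1}{-1/8}^{2}$. To evaluate this $_2F_1$ in closed form I would chain the Pfaff transformation \eqref{eq:pfaff1} (moving the argument from $-1/8$ to $1/9$) with an Andrews--Askey--Roy quadratic transformation for $_2F_1(a,a+\tfrac12;c;z)$, followed by a further Pfaff and a Kummer-type evaluation \eqref{eq:Kummer1}, to match the classical identity $\pFq{3}{2}{\frac12,\frac12,\frac12}{1,1}{-\frac18}=\sqrt 2\,\G(1/4)^4/(8\pi^3)$, which one can verify numerically. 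The $\sqrt 2$ in the complex formula translates, via \eqref{eq:4power2}, into a factor $2^{(p-1)/2}\equiv (-1)^{(p^2-1)/8}[1+O(p)]\pmod{p^3}$, supplying the sign in the theorem; Lemma~\ref{GammaP} and Theorem~\ref{truncate} then convert the $\G$-quotient to a $\G_p$-quotient and produce its Taylor expansion mod $p^3$.

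The correction term $p^2 S$ is handled exactly as in Propositions~\ref{3f2series} and \ref{3f2gamma}: introduce two shifted $_3F_2$ variants $_3F_2(C),\,_3F_2(D)$ at argument $-1/8$, compute $_3F_2(C)-{}_3F_2(D)$ both directly (via Lemma~\ref{expansion}, as a $\mathbb{Z}$-linear combination of $A_k^2$- and $B_k$-sums) and via Clausen plus the transformation chain above (as a $\G_p$-quotient), and solve for $S$ modulo $p$ in terms of the $G_i(1/4)$-values. Substituting both pieces back into the congruence of the first paragraph, the $O(p)$ and $O(p^2)$ contributions in $G_i(1/4)$ cancel pairwise, leaving the stated result. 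The principal obstacle is the explicit evaluation in the second paragraph: naive iterations of Pfaff, Euler, and the various quadratic transformations cycle among the algebraic points $\{-1/8,\,1/9,\,(4-3\sqrt 2)/8,\,17-12\sqrt 2\}$ without yielding an a priori $\G$-quotient, so one must combine them with a nontrivial $_2F_1$ identity (Gauss' second summation at $z=1/2$ or Kummer at $z=-1$ after appropriate parameter alignment) to break the cycle. A secondary subtlety is that $\sqrt 2\notin\Q_p$ when $p\equiv 5\pmod 8$, so for those primes the intermediate formulas live in $\Q_p(\sqrt 2)$ and one must check that squaring returns the final expression to $\Z_p$.
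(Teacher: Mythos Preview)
Your proposal has a genuine gap, and you yourself flag it: you have no mechanism to evaluate the perturbed series
\[
\pFq{3}{2}{\frac{1-p}2,\frac{1+p}2,\frac12}{1,1}{-\frac18}
= \pFq{2}{1}{\frac{1-p}4,\frac{1+p}4}{1}{-\frac18}^{2}
\]
as a $\Gamma$-quotient. The chain you describe (Pfaff to $1/9$, quadratic, Pfaff, Kummer) does not close. Concretely, after Clausen and the quadratic transformation \eqref{eq:Kummer2} with $z=\tfrac12$ one lands at $\pFq{2}{1}{\frac{1-p}{2},\frac12}{1}{\frac12}$; Pfaff then sends the argument to $-1$, but the result $\pFq{2}{1}{\frac{1-p}{2},\frac12}{1}{-1}$ has $c=1$, which is \emph{not} of the Kummer form $c=1+a-b$ for either ordering of $a,b$. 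Neither Gauss' second summation nor Bailey's theorem at $z=\tfrac12$ matches the parameters either. Your symmetric perturbation, which alters only the numerator and keeps the denominator at $1,1$, does not sit inside any known closed-form family at $-\tfrac18$, so the analogue of Proposition~\ref{3f2gamma} cannot be established.

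The paper sidesteps this entirely by using Karlsson's identity
\[
\pFq{3}{2}{3a-1,a,1-a}{2a,\,a+\tfrac12}{-\frac18}
=2^{3a-3}\left(\frac{\Gamma(a+\tfrac12)\Gamma(\tfrac{a}{2})}{\Gamma(\tfrac{3a}{2})\Gamma(\tfrac12)}\right)^{2},
\]
which for $a=\tfrac12$ is exactly the classical evaluation you quote. The crucial point is that this gives a \emph{one-parameter family} of perturbed $_3F_2$'s at $-\tfrac18$---with correlated perturbations in both numerator and denominator---already expressed as $\Gamma$-quotients. The paper takes the three specializations $a=\tfrac{1+p}{2},\ \tfrac{1-p/3}{2},\ \tfrac{1-p}{2}$, converts each to a $\Gamma_p$-expression via Lemma~\ref{GammaP} and \eqref{eq:4power2}, and expands the left-hand sides via harmonic sums as in Lemma~\ref{expansion}. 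This produces three congruences of the shape
\[
A_0 + iA_1p + i^2A_2p^2 \equiv (-1)^{(p^2-1)/8}\,\frac{\Gamma_p(\tfrac14)^4}{\Gamma_p(\tfrac12)^2}\Bigl[1+iXp+\tfrac{i^2}{18}Yp^2\Bigr]\pmod{p^3},\qquad i\in\{\tfrac32,-\tfrac12,-\tfrac32\},
\]
with $A_0$ the target sum; the $3\times 3$ coefficient matrix is invertible mod $p$, so $A_0$ can be read off directly. No separate ``correction term'' $p^2S$ is needed, and no $\sqrt{2}$ ever appears at the $p$-adic level. The missing ingredient in your plan is precisely Karlsson's formula (and the willingness to perturb the denominator parameters along with the numerator).
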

\begin{proof}Here, we will outline the proof and will skip some details if the analysis is similar to what we have used above.

When $a$ is not a negative integer, \eqref{eq:Ka} says
\begin{equation}\label{eq:Ka1}
\pFq{3}{2}{3a-1,a,1-a}{,2a,a+\frac12}{-\frac18}=2^{3a-3}\left (\frac{\G(a+\frac12)\G(\frac a2)}{\G(\frac {3a}2)\G(\frac 12)} \right )^2\,.
\end{equation}

Setting $a=\frac{1+p}2$ in~\eqref{eq:Ka1}
\begin{equation}
 \pFq{3}{2}{\frac{1+3 p}2\quad \frac{1+p}2\quad \frac{1-p}2}{1+p\quad 1+ \frac p2}{\frac{-1}8}=
2^{\frac{3p-3}2}
\left ( \frac{\G(1+\frac p2)\G(\frac {1+p}4)}{\G(\frac {3+3p}4)\G(\frac 12)} \right )^2
\end{equation}
The number $\displaystyle \frac12$ and $\displaystyle 1+\frac p2$ differ by an integer and $\displaystyle \frac p2$
is the only multiple of $p$ an integral distance from both  of them. The same holds for $\displaystyle \frac{1+p}4$
and $\displaystyle \frac{3+3p}4$ so
$$ 2^{\frac{3p-3}2}
\left ( \frac{\G(1+\frac p2)\G(\frac {1+p}4)}{\G(\frac {3+3p}4)\G(\frac 12)} \right )^2
= 2^{\frac{3p-3}2}\left ( \frac{\G_p(1+\frac p2)\G_p(\frac {1+p}4)}{\G_p(\frac {3+3p}4)\G_p(\frac 12)} \right )^2.$$

Expanding the above using local analytic property of $\G_p(\cdot)$ and \eqref{eq:4power2}, we have
\begin{equation}\label{eq:5.12}
\pFq{3}{2}{\frac{1+3 p}2,\frac{1+p}2,\frac{1-p}2}{1+p, 1+ \frac p2}{\frac{-1}8}\equiv (-1)^{\frac{p^2-1}8}\frac{\G_p(\frac 14)^4}{\G_p(\frac 12)^2} \left [ 1+\frac 32Xp+\frac18 Y p^2\right ] \mod p^3.
\end{equation}
where
$$\displaystyle X=G_1(0)-G_1\left(\frac 14\right) \quad \text{and} \quad
\displaystyle Y=-18G_1(0)G_1\left(\frac14\right)+4G^2_1\left(\frac14\right)+5G_2\left(\frac14\right)+9G^2_1(0).
$$

Letting  $\displaystyle a=\frac{1- p/3}2$  in \eqref{eq:Ka1},

\begin{equation}\label{eq:4.6}\pFq{3}{2}{\frac{1- p}2,\frac{1+p/3}2,\frac{1-p/3}2}{1-\frac p3, 1- \frac p6}{\frac{-1}8}=2^{\frac{-3- p}2}\left ( \frac{\G(1- \frac p6)\G(\frac{1- p/3}4)}{\G(\frac{3- p}4)\G(\frac12)}\right )^2.\end{equation}
When $p\equiv 1\mod 4$, one of $1-\frac p6$ and $\frac{1-p/3}4$ is in $\frac 16+\Z$ and the other one is in $\frac 56+\Z$. Thus $\frac{\G(\frac16)\G(\frac56)}{\G(1- \frac p6)\G(\frac{1- p/3}4)}$ is a product of 2 rising factorials and $\frac{\G(\frac 12)}{\G(\frac{3- p}4)}$ is itself a rising factorial. None of them contains multiples of $p$. Consequently, $$ \left ( \frac{\G(1- \frac p6)\G(\frac{1- p/3}4)}{\G(\frac{3- p}4)\G(\frac12)}\right )^2=\left (
\frac{\G(1- \frac p6)\G(\frac{1- p/3}4)}{\G(\frac16)\G(\frac56)}\frac{\G(\frac 12)}{\G(\frac{3- p}4)}\frac{\G(\frac16)\G(\frac56)}{\G(\frac12)^2} \right )^2=4\left ( \frac{\G_p(1- \frac p6)\G_p(\frac{1- p/3}4)\G_p(\frac 12)}{\G_p(\frac16)\G_p(\frac56)\G_p(\frac{3- p}4)}\right )^2$$ as $\left (\frac{\G(\frac 16)\G(\frac 56)} {\G(\frac 12)^2}\right )^2=4$. So,
\begin{equation}\label{eq:for35}
\pFq{3}{2}{\frac{1- p}2,\frac{1+p/3}2,\frac{1-p/3}2}{1-\frac p3, 1- \frac p6}{\frac{-1}8}=2^{\frac{1- p}2}\left ( \frac{\G_p(1- \frac p6)\G_p(\frac{1- p/3}4)\G_p(\frac 12)}{\G_p(\frac16)\G_p(\frac56)\G_p(\frac{3- p}4)}\right )^2.\end{equation}
Together using \eqref{eq:4power2}, we have
\begin{equation}\label{eq:5.15}
\pFq{3}{2}{\frac{1- p}2,\frac{1+p/3}2,\frac{1-p/3}2}{1-\frac p3, 1- \frac p6}{\frac{-1}8}\equiv (-1)^{\frac{p^2-1}8}\frac{\G_p(\frac 14)^4}{\G_p(\frac 12)^2} \left [ 1-\frac12 Xp+\frac{1}{72}Yp^2 \right ] \mod p^3
\end{equation}where $X$ and $Y$ as before.

When $a=\displaystyle \frac{1-p}2$, by\eqref{eq:Ka2}

\begin{equation}\label{eq:Ka2}
\pFq{3}{2}{\frac{1-3p}{2}, \frac{1-p}2,\frac{1+p}2}{1-p,1-\frac p2}{-\frac 18}=2^{\frac{3(p-1)}2} \left( {\G_p\left(\frac 12\right)\G_p(1-p)}{\G_p\left(\frac{1+3p}3\right)\G_p\left(\frac{1+p}4\right)}\right)^2
\end{equation}
Applying \eqref{eq:4power2} to $2^{\frac{3(p-1)}2}$ and the analytic expansion to $\G_p(\cdot)$, we have

\begin{equation}\label{eq:5.17}
\pFq{3}{2}{\frac{1-3p}{2}, \frac{1-p}2,\frac{1+p}2}{1-p,1-\frac p2}{-\frac 18}\equiv (-1)^{\frac{p^2-1}8}\frac{\G_p(\frac 14)^4}{\G_p(\frac 12)^2} \left [ 1-\frac 32 Xp+\frac18 Yp^2 \right ] \mod p^3
\end{equation}
\vskip1em

Now we rewrite the right sides of \eqref{eq:5.12}, \eqref{eq:5.15}, and \eqref{eq:5.17} in terms harmonic sums.  So mod $p^3$ we get a linear system of 3 equations of the form
$$A_0+iA_1p+i^2A_2p^2\equiv (-1)^{\frac{p^2-1}8}\frac{\G_p(\frac 14)^4}{\G_p(\frac 12)^2}
\left [ 1+i Xp+\frac{i^2}{18} Yp^2 \right ] \mod p^3$$ with $i=3/2,-1/2,-3/2$ and
$\displaystyle A_0=\sum_{k=0}^{\frac{p-1}2} \left (\frac{(\frac 12)_k}{k!} \right ) \left (\frac{-1}8 \right )^k$ and $A_1,A_2$ involving harmonic sums like \eqref{eq:AkBk}.
When  $p\ge 5$  the matrix $M=\begin{pmatrix}1&3/2&1/8\\1&-1/2&1/72\\1&-3/2&1/8\end{pmatrix}$ is invertible in $\Z/p^3\Z$, thus
$$\displaystyle
A_0\equiv (-1)^{\frac{p^2-1}8}\frac{\G_p\left(\frac 14\right)^4}{\G_p\left(\frac12\right)^2} =
-(-1)^{\frac{p^2-1}8}\G_p\left(\frac 14\right)^4 \mod p^3.$$
\end{proof}

\section{Appendix: Hypergeometric formulae}\label{ss:Appendix}
\subsection{Pfaff transformation formula}  \cite[(2.3.14), pp. 79]{AAR}
\begin{equation}\label{eq:pfaff}
\pFq{2}{1}{-n,b}{c}{x}=\frac{(c-b)_n}{(c)_n} \, \pFq{2}{1}{-n,b}{,b+1-n-c}{1-x}.
\end{equation}This equation is derived by finding solutions to a differential
equation with three singularities. For general $n,b$ and $c$
~\eqref{eq:pfaff}
 holds for $x \in \C \backslash [1,\infty)$.
Here, as $n \in \N$, both sides are polynomials and equality on $\C \backslash [1,\infty)$ implies
equality on all of $\C$, and in particular at $x=2$, the value we will need. In \cite{AAR} the proof
of $(2.3.14)$ involves an argument where $c$ is assumed {\em not} in $\Z$ unless otherwise stated. This
pertains only to the number of independent {\em analytic} solutions to the differential
equation at hand and does not intervene here.
\subsection{Clausen formula}    The following formula holds as formal power series and hence holds when both hand sides converge. See \cite[\S2.5]{Slate} and Page 116 of \cite{AAR}.
\begin{equation}\label{eq:Clausen1}
\left( \pFq{2}{1}{a,b}{,a+b+\frac12}{x}\right)^2=\, \pFq{3}{2}{2a,2b,a+b}{,2a+2b,a+b+\frac12}{x}.
\end{equation}

\subsection{Some evaluation formulae with argument $z=1$}
\noindent\newline
\begin{itemize}
\item Gauss summation formula. Thereof 2.2.2 of \cite{AAR}
For $\Re(c-a-b)>0$,
\begin{equation}\label{eq:Gauss}
\pFq{2}{1}{a,b}{,c}{1}=\frac{\G(c)\G(c-a-b)}{\G(c-a)\G(c-b)}
\end{equation}

\item Chu-Vandermonde Theorem (Corollary 2.2.3 of \cite{AAR}). For $n\in \mathbb N$
\begin{equation}\label{eq:CV}
\pFq{2}{1}{-n,a}{,c}{1}=\frac{(c-a)_n}{(c)_n}\,.
\end{equation}\label{eq:PS}
\item Pfaff-Saalsch\"utz Theorem (Theorem 2.2.6 of \cite{AAR}). For $n\in \mathbb N$
\begin{equation}
\pFq{3}{2}{-n,a,b}{,c,1+a+b-c-n}{1}=\frac{(c-a)_n(c-b)_n}{(c)_n(c-a-b)_n}\,.
\end{equation}
\item  Dixon formula ((2.2.11) of \cite{AAR}). For $\Re\left({\frac{a}2+b+c+1}\right)>0$,
\begin{equation}\label{eq:Dixon}
\pFq{3}{2}{a,-b,-c}{,a+b+1,a+c+1}{1}=
\frac{\G(\frac a2+1)\G(a+b+1)\G(a+c+1)\G(\frac a2+b+c+1)}{\G(a+1)\G(\frac a2+b+1)\G(\frac a2+c+1)\G(a+b+c+1)}\,.
\end{equation}
\item The next formula   holds when $n$ is a positive integer and both sides converge
\begin{equation}\label{eq:57}
\pFq{3}{2}{-n,a,b}{d,e}{1}=\frac{(e-a)_n}{(e)_n}\pFq{3}{2}{-n,a,d-b}{d,a+1-n-e}{1}
\end{equation}  For a more detailed argument:
\eqref{eq:57}  is derived from Theorem $3.3.3$ of \cite{AAR} which in turn
is derived from Euler's transformation formula (Theorem $2.2.5$ of \cite{AAR})
which follows from Euler's Integral formula (Theorem $2.2.1$ of \cite{AAR}) which holds
on an open set. Both sides of~\eqref{eq:57} are rational functions of $a,b,d,e$ and as
they agree on
 some open subset of ${\mathbb R}^4$ they agree
everywhere.

\item Whipple's well-posed $_7F_6$
evaluation formula, Theorem $3.4.5$ of \cite{AAR}
\begin{multline}
\pFq{7}{6}{a\quad 1+\frac a2\quad b\quad c\quad d\quad e\quad f}{\frac12 a\quad 1+a-b\quad 1+a-c\quad 1+a-d\quad 1+a-e\quad 1+a-f}{1}=\\
\frac{\G(1+a-d)\G(1+a-e)\G(1+a-f)\G(1+a-d-e-f)}{\G(1+a)\G(1+a-e-f)\G(1+a-d-e)\G(1+a-d-f)} \\\times \,
\pFq{4}{3}{1+a-b-c,d,e,f}{d+e+f-a,1+a-b,1+a-c}{1}
\end{multline}
This holds when
the left side converges and the right side terminates.
\item Dougall's formula (c.f. Theorem 3.5.1 of \cite{AAR} or  (8.2) of \cite{Whipple}) which
asserts that for  $f$  a negative integer and $1+2a=b+c+d+e+f$ then
\begin{multline}
\pFq{7}{6}{a\quad 1+\frac a2\quad b\quad c\quad d\quad e\quad f}{\frac a2\quad 1+a-b\quad 1+a-c\quad 1+a-d\quad 1+a-e\quad 1+a-f}{1}=
\\\frac{(a+1)_{-f}(a-b-c+1)_{-f}(a-b-d+1)_{-f}(a-c-d+1)_{-f}}{(a-b+1)_{-f}(a-c+1)_{-f}(a-d+1)_{-f}(a-b-c-d+1)_{-f}}
\end{multline}
\end{itemize}
\subsection{Some evaluation formulae with argument $z\neq 1$}

\begin{itemize}
\item Kummer's formula. Corollary 3.1.2 of \cite{AAR} or (2.3.2.9) of \cite{Slate}

\begin{equation}\label{eq:Kummer}
\pFq{2}{1}{a,b}{,a-b+1}{-1}=\frac{\G(a-b+1)\G(\frac a2+1)}{\G(a+1)\G(\frac a2-b+1)}.
\end{equation} This holds when $\Re(b)<1$ and both  sides converge. In particular, it holds when $b$ is a negative integer and the left  side converges. In this case, the right  side can be written as $\frac{(a+1)_{-b}}{(1+\frac a2)_{-b}}$.
\item  If  $a\notin \Z_{<0}$, then
\begin{equation}\label{eq:Ka}
\pFq{3}{2}{3a-1,a,1-a}{,2a,a+\frac12}{-\frac18}=2^{3a}\left (\frac{\G\left(a+\frac12\right)\G\left(\frac a2\right)}{\G\left(\frac {3a}2\right)
\G\left(\frac 12\right)} \right )^2
\end{equation}This is entry 1 in Table 1 of  \cite{Ka2} by Karlsson. When $a\in \Z_{<0}$, the value is different. In particular, when $p\equiv 1\mod 4$ and  $a=\frac{1-p}2$,
\begin{equation}\label{eq:Ka2}
\pFq{3}{2}{3a-1,a,1-a}{,2a,a+\frac12}{-\frac18}=-2^{\frac{3(p-1)}2}
\left( {\G_p(1-p)}{\G_p\left(\frac{1+3p}4\right)\G_p\left(\frac{1+p}4\right)}\right)^2.
\end{equation}
 To see why there are two cases,
we first recall another formula of Pfaff
\begin{equation}\label{eq:pfaff2}
\pFq{2}{1}{a,b}{c}{z}= \left \{ \begin{array}{ll}
(1-z)^{-b}\, \pFq{2}{1}{a,c-b}{c}{\frac{z}{z-1}}        & \text{ if }  b\in \Z_{<0} \text{ and } b<\Re(a)\\(1-z)^{-a}\, \pFq{2}{1}{a,c-b}{c}{\frac{z}{z-1}} & \text{ otherwise.}                       \end{array} \right.
\end{equation} For the first case, see \cite[pp 79]{AAR}, which can be verified using our argument below.  The second case can be deduced from the argument in \cite[pp 76]{AAR}. In articular, we note that in this case, both hand sides are polynomials in $z$.
We  also need a quadratic formula (which can be derived from the quadratic formula on pp 130, right above (3.1.16) \cite{AAR} and the above Pfaff formula \eqref{eq:pfaff2})
\begin{equation}\label{eq:Kummer2}
\pFq{2}{1}{\frac a2,\b-\frac{a}2}{b+\frac 12}{\frac{z^2}{4(z-1)}}= \left \{ \begin{array}{ll}  (1-z)^{\frac{b}2}\pFq{2}{1}{a,b}{2\b}{z}& \text{ if } b\in \Z_{<0} \text{ and } b<\Re(a)\\(1-z)^{\frac{a}2}\pFq{2}{1}{a,b}{2\b}{z} &\text{ otherwise.}  \end{array}\right.
\end{equation} Another ingredient we need is how to deduce Kummer's formula \eqref{eq:Kummer} from Gauss summation \eqref{eq:Gauss}. This formula appears on page 126 of \cite{AAR} which says
\begin{equation}\label{eq:GtoK}
\pFq{2}{1}{a,b}{1+a-b}{-1}=2^{-a} \pFq{2}{1}{\frac a2, \frac{a+1}2-b}{1+a-b}{1}.
\end{equation}

Now we will show case 2 of \eqref{eq:Ka}.  When $p\equiv 1\mod 4$ and $a=\frac{1-p}2$,

\begin{eqnarray*}
\pFq{3}{2}{\frac{1-3p}2,\frac{1-p}2,\frac{1+p}2}{1-p,1-\frac p2}{-\frac18}&\overset{ \eqref{eq:Clausen1}}{=}&\, \pFq{2}{1}{\frac{1-3p}4,\frac{1+p}4}{1 -\frac p2}{-\frac18}^2\\
&\overset{\eqref{eq:Kummer2}}{=}&\left (\left(1-\frac 12\right)^{\frac{p-1}4}\pFq{2}{1}{\frac{1+p}2,\frac {1-p}2}{1-p}{\frac 12}\right )^2\\
&\overset{\eqref{eq:pfaff2}}{=}& \left (\left(1-\frac 12\right)^{\frac{p-1}4}(1+1)^{\frac{p-1}2}\pFq{2}{1}{\frac{1+p}2,\frac {1-p}2}{1-p}{-1}\right )^2\\
&=&\left (2^{\frac{(p-1)}4}\pFq{2}{1}{\frac{1+p}2,\frac {1-p}2}{1-p}{-1}\right )^2\\
&\overset{\eqref{eq:GtoK}}{=}&\left (2^{\frac{(p-1)}4}2^{\frac{(p-1)}2}\pFq{2}{1}{\frac{1-p}4,\frac{1-3p}4 }{1-p}{1} \right)^2\\
&\overset{\eqref{eq:CV}}{=}& 2^{\frac{3(p-1)}2} \left( \frac{\left(\frac{3-p}4\right)_{\frac{p-1}4}}{(1-p)_{\frac{p-1}4}}\right)^2\\
&\overset{\text{ 3) of Lemma }\, \ref{GammaP}}{=}& 2^{\frac{3(p-1)}2}
\left( \frac{\G_p\left(\frac 12\right)\G_p(1-p)}{\G_p\left(\frac{3-p}4\right)\G_p\left(\frac{3-3p}4\right)}\right)^2\\
&=& -2^{\frac{3(p-1)}2} \left( {\G_p(1-p)}
{\G_p\left(\frac{1+3p}4\right)\G_p\left(\frac{1+p}4\right)}\right)^2
\end{eqnarray*}

\end{itemize}

\end{document}